\definecolor{dkgreen}{rgb}{0,0.5,0}
\numberwithin{equation}{subsection}
\newtheorem{thm}{Theorem}[subsection]
\newtheorem{lem}[thm]{Lemma}
\newtheorem{prop}[thm]{Proposition}
\newtheorem{cor}[thm]{Corollary}
\theoremstyle{definition}
\newtheorem{example}[thm]{Example}
\newtheorem{defn}[thm]{Definition}
\newtheorem{rmk}[thm]{Remark}
\newtheorem{question}[thm]{Question}
\newtheorem{claim}[thm]{Claim}
\newtheorem*{conjecture*}{Conjecture}
\newtheorem*{theorem*}{Theorem}
\newtheorem*{claim*}{Claim}
\newtheorem*{corollary*}{Corollary}
\newtheorem*{notation*}{Notation}
\theoremstyle{definition}
\DeclareSymbolFont{bbold}{U}{bbold}{m}{n}
\DeclareSymbolFontAlphabet{\mathbbold}{bbold}
\def\Hom{{\sf Hom}}
\def\SL{{\rm SL}}
\def\PSL{{\rm PSL}}
\def\PGL{{\rm PGL}}
\def\Rep{{\rm Rep}}
\def\Spec{{\rm Spec}}
\def\Gal{{\rm Gal}}
\def\Sh{{\rm Sh}}
\def\Aut{{\rm Aut}}
\def\K3{{\rm K3}}
\def\GL{{\rm GL}}
\def\Res{{\rm Res}}
\def\ad{{\rm ad}}
\def\tr{{\rm tr}}
\def\ad{{\rm ad}}
\def\nrd{{\rm nrd}}
\def\exc{{\rm exc}}
\newcommand{\Shbo}{\Sh_{B, \mc{O}}}
\newcommand{\Qbar}{\overline{\mathbb{Q}}_{}}
\newcommand{\Qzmodrs}{\mathrm{Mod}_{\mathrm{rs}}(\Qbar(z)\langle\partial_z\rangle)}
\newcommand{\Conn}{\mathrm{Conn}}
\newcommand{\C}{\mathbb C}
\newcommand{\Mtil}{\widetilde{\mc{M}}}
\newcommand{\Ctil}{\widetilde{\mc{C}}}
\newcommand{\defeq}{\vcentcolon=}
\newcommand{\colim@}[2]{%
  \vtop{\m@th\ialign{##\cr
    \hfil$#1\operator@font colim$\hfil\cr
    \noalign{\nointerlineskip\kern1.5\ex@}#2\cr
    \noalign{\nointerlineskip\kern-\ex@}\cr}}%
}
\newcommand{\colim}{%
  \mathop{\mathpalette\colim@{}}\nmlimits@
}
\newcommand\nc{\newcommand}
\def\resp{\text{resp.\kern.3em}}
\def\eg{\textit{e.g.}\kern.3em}
\def\ie{\textit{i.e.}\kern.3em}
\def\loccit{loc.\kern3pt cit.{}\xspace}
\begin{document}

\title[On Siegel's problem and Dwork's conjecture for $G$-functions]{On Siegel's problem and Dwork's conjecture \\ for $G$-functions}

\author{Javier Fres\'an}
\address{Sorbonne Université and Université Paris Cité, CNRS, IMJ-PRG, 75005 Paris}
\email{javier.fresan@imj-prg.fr}

\author{Yeuk Hay Joshua Lam}
\address{Humboldt Universität zu Berlin,
       Institut für Mathematik--Alg.Geo.,
        Rudower Chaussee~25
        Berlin, Germany}
\email{joshua.lam@hu-berlin.de}

\author{Yichen Qin}
\address{Humboldt Universität zu Berlin,
       Institut für Mathematik--Alg.Geo.,
        Rudower Chaussee~25
        Berlin, Germany}
\email{yichen.qin@hu-berlin.de}

\date{\today}

\begin{abstract}  
We answer in the negative Siegel's problem for $G$-functions, as formulated by Fischler and Rivoal. Roughly, we prove that there are $G$\nobreakdash-functions that cannot be written as polynomial expressions in algebraic pullbacks of hypergeometric functions; our examples satisfy differential equations of order two, which is the smallest possible. In fact, we construct infinitely many non\nobreakdash-equivalent rank-two local systems of geometric origin which are not algebraic pullbacks of hypergeometric local systems, thereby providing further counterexamples to Dwork's conjecture and answering a question by Krammer. The main ingredients of the proof are a Lie algebra version of Goursat's lemma, the monodromy computations of hypergeometric local systems due to Beukers and Heckman, as well as results on invariant trace fields of Fuchsian groups. 
\end{abstract}

\maketitle 
\setcounter{tocdepth}{1}
\tableofcontents

\section{Introduction}

\newcommand*{\ldblbrace}{\{\mskip-5mu\{}
\newcommand*{\rdblbrace}{\}\mskip-5mu\}}

In his 1929 memoir on applications of Diophantine approximation, Siegel \cite{siegel2014einige} introduced two classes of functions of a single variable $z$ that he named $G$-functions and $E$-functions, as they were reminiscent of the geometric and the exponential series, respectively. The present work focuses on $G$-functions: roughly, these are power series $F(z)=\sum_{n\geq 0} a_n z^n$ with algebraic coefficients $a_n\in \Qbar$ such that 
\begin{enumerate}
    \item $F(z)$  converges in  a neighborhood of $0$ and satisfies a linear differential equation with coefficients in $\Qbar(z)$; 
    \item  the sequence $d_n$ of ``common denominators'' of $a_0, \ldots, a_n$ grows at most exponentially with $n$. 
\end{enumerate}
Siegel gave many examples of $G$-functions, with the primary ones being the so-called hypergeometric functions, defined explicitly as
\begin{equation}\label{eqn:explicithyp}
 _{p+1}F_{p}[a_1, \dots , a_{p+1}; b_1, \dots , b_p\,|\,z] \defeq \sum_{n=0}^\infty \frac{(a_1)_n\cdots (a_{p+1})_n}{(b_1)_n\cdots (b_p)_n} \frac{z^n}{n!},
\end{equation}
for parameters $a_i \in \mb{Q}$ and $ b_j \in \mb{Q} \setminus \mb{Z}_{\leq 0}$, where $(x)_n$ is the Pochhammer symbol, given by $(x)_0=1$ and $(x)_n= x(x+1)\cdots (x+n-1)$ for $n\geq 1$. For example, $_{1}F_0[1;\emptyset|z]=\sum_{n\geq 0} z^n$ is the usual geometric series.

Hypergeometric functions have a rich history. For $p=1$, their coefficients seem to have been first studied by Wallis in 1656 \cite[Scholium to Prop.\,190]{wallis}, who coined the term  \emph{hypergeometric progression} in contrast to the usual geometric progressions. These functions were then studied by Stirling, Euler, Gauss, Riemann, and many others; we refer the reader to \cite{dutka1984early} for the early history of hypergeometric functions. More recently, they were revisited by Katz in his influential works~\cite{Katz1990,Katz96rigid} on differential equations and rigid local systems on a punctured projective line. 

Conjecturally, $G$-functions are precisely the elements of $\Qbar[\![z]\!]$ satisfying differential equations \emph{of geometric origin}, i.e., products of irreducible factors of Picard\nobreakdash--Fuchs differential equations arising from one-parameter families of algebraic varieties over~$\Qbar$; see \cite[Intro.\,Conj.]{andre-g-functions}. That solutions of Picard\nobreakdash--Fuchs differential equations are indeed $G$\nobreakdash-functions was proved by Andr\'e \cite[Intro.\,Thm.\,B]{andre-g-functions}, and the converse is the wide open Bombieri--Dwork conjecture. For this reason, $G$-functions have close ties to arithmetic geometry \cite{andre25}. The main result of this paper is a concrete statement about $G$\nobreakdash-functions: we show the existence of a $G$-function that cannot be expressed as a polynomial  in algebraic pullbacks of hypergeometric functions.

\subsection{Main results}
The set of $G$-functions is closed under addition and multiplication, as well as algebraic substitutions of the variable. Therefore, starting from the hypergeometric $G$-functions \eqref{eqn:explicithyp}, one can produce many new~$G$\nobreakdash-functions in this way. The analog of Siegel's problem for $E$-functions, as stated by Fischler and Rivoal in~\cite[Question 2]{fischlerrivoal}, asks whether this produces all $G$-functions: 

\begin{question}[Siegel's problem for $G$-functions]\label{siegelproblem}
Is it possible to write every~$G$\nobreakdash-function as a polynomial with coefficients in $\Qbar$ in functions of the form 
\begin{equation}\label{eqn:hypgfunction}
    \mu(z)\cdot \ _{p+1}F_{p}[a_1, \dots , a_{p+1}; b_1, \dots , b_p\,|\, \lambda(z)],
\end{equation}
for integers $p\geq 0$, parameters $a_i \in \mb{Q}$ and $b_j \in \mb{Q} \setminus \mb{Z}$, and algebraic functions $\mu(z)$ and $ \lambda(z)$ such that $\lambda(z)$ is holomorphic at $z=0$, and  $\lambda(0)=0$?
\end{question}

The reason for the conditions on $\lambda$ is that $G$-functions are, by definition, centered at zero. One can also work with a slightly more general notion that accommodates poles and log terms, but we will ignore this subtlety in the introduction. 

\begin{rmk}\label{rmk:FJ-vs-us}
This is almost the exact analog of Siegel's original problem for~$E$\nobreakdash-functions, which was only recently answered in the negative by Fres\'an and Jossen~\cite{fresanjossen}. The main difference is that there is much more flexibility in the case of $G$-functions: while composing $G$-functions with algebraic functions $\lambda(z)$ as above gives again~$G$\nobreakdash-functions, the same is seldom true for $E$-functions. 
\end{rmk}

Fischler and Rivoal showed that \cref{siegelproblem} likely had a negative answer. In fact, combining \cite[Thm.\,7.2]{fischlerrivoal} with the argument on page 90 of \textit{loc.\,cit.} due to André, the exponential Grothendieck period conjecture from \cite[Conj.\,1.3.2]{exp-mot} implies that there are counterexamples to a weaker version of \cref{siegelproblem}, assuming further that the algebraic functions $\lambda$ have a common singularity at which they all tend to $\infty$. 
Our main result provides unconditional and explicit counterexamples. We say that a $G$-function is \textit{of differential order $k$} if $k$ is the minimal order of a non-zero differential operator with coefficients in $\Qbar(z)$ annihilating~it.

\begin{thm}\label{thm:gfunction}
    The answer to \cref{siegelproblem} is negative. More precisely, there exist $G$-functions of differential order $2$ that cannot be written as a $\Qbar$-polynomial expression in functions of the form \eqref{eqn:hypgfunction}. 
\end{thm}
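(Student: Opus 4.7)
The plan is to translate the statement into a question about rank-two local systems and then obstruct it using an arithmetic invariant.

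First, I would recast \cref{siegelproblem} as follows. Suppose a $G$-function $F(z)$ of differential order two is expressible as a $\overline{\mathbb{Q}}$-polynomial in expressions of the form \eqref{eqn:hypgfunction}. Then the irreducible factor of its minimal differential operator corresponds to a rank-two local system $\mathbb{L}$ on an open subset of $\mathbb{P}^1_{\overline{\mathbb{Q}}}$ which embeds, as a sub-quotient, inside a tensor product
\[
\bigotimes_{i} f_i^{\ast}\mathcal{H}_i,
\]
where each $f_i$ is an algebraic map, each $\mathcal{H}_i$ is a hypergeometric local system, and finitely many rank-one twists have been absorbed. The strategy is to realize $\mathbb{L}$ explicitly from a one-parameter family of varieties over $\overline{\mathbb{Q}}$ (so that the associated period $G$-function automatically exists, by Andr\'e's theorem), and then to prove that \emph{no} such embedding can occur.

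Next, I would use the Lie algebra version of Goursat's lemma applied to the Zariski closures of the monodromy groups. Beukers--Heckman classify these closures for irreducible hypergeometrics: up to finite groups, one gets $\mathrm{SL}_n$, $\mathrm{Sp}_n$, or $\mathrm{SO}_n$ with the standard representation. The Lie-Goursat step says that if the (semisimple) monodromy algebra of $\mathbb{L}$ sits inside a sub-quotient of $\bigoplus_i \mathrm{Lie}(G_i)$ acting on $\bigotimes_i V_i$, then it must project isomorphically onto the summand coming from a single factor $i_0$. Combined with the constraint that the standard representations of classical groups contain no two-dimensional irreducible piece except in very low rank, this forces $\mathbb{L}$ itself (up to a rank-one twist and passing to an algebraic cover) to be an algebraic pullback of a single rank-two hypergeometric local system $\mathcal{H}_{i_0}$.

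Third, I would obstruct such an identification via the invariant trace field. For a Zariski-dense finitely generated subgroup $\Gamma \subset \mathrm{SL}_2$, the invariant trace field $k\Gamma = \mathbb{Q}(\{\operatorname{tr}(\gamma^2) : \gamma \in \Gamma\})$ is a commensurability invariant which is insensitive to algebraic pullback and rank-one twist. By Beukers--Heckman's explicit formulae for hypergeometric monodromy generators in terms of the parameters $a_i, b_j$, the invariant trace field of any rank-two hypergeometric local system lies in a cyclotomic field $\mathbb{Q}(\zeta_N)$ and is in particular abelian over $\mathbb{Q}$. Thus it suffices to exhibit geometric rank-two local systems whose projective monodromy is a Fuchsian group with \emph{non-abelian} invariant trace field; infinitely many non-commensurable such examples are known to come from families of curves (e.g.\ arithmetic or non-arithmetic triangle-like Fuchsian groups arising in moduli problems), giving both \cref{thm:gfunction} and the infinitely many counterexamples to Dwork's conjecture asserted in the abstract.

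The main obstacle I anticipate is the Lie-Goursat reduction: one must take genuine care with reducibility, with the contribution of finite (non-Lie) monodromy factors in the Beukers--Heckman list, and with the way algebraic pullbacks along $f_i$ interact with tensor products --- in particular, distinct pullback maps $f_i$ prevent a naive application of Goursat, so a preliminary step of passing to a common ramified cover of $\mathbb{P}^1$ is needed before the Lie algebra argument can be carried out cleanly. Once that reduction is complete, the trace field obstruction is essentially arithmetic bookkeeping.
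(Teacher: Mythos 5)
Your proposal follows essentially the same route as the paper: a Tannakian reformulation reducing the problem to obstructing membership in a category generated by algebraic pullback-pushforwards of hypergeometric connections, the Lie-algebra Goursat lemma to reduce to a single rank-two hypergeometric generator (via the Beukers--Heckman classification of differential Galois groups), and the commensurability-invariance of the adjoint/invariant trace field to conclude from the fact that rank-two hypergeometrics have abelian trace fields while suitable uniformizing local systems on rational Shimura (and Teichmüller) curves have non-abelian ones. You even correctly anticipate the main technical subtlety, namely that the algebraic pullbacks require a Mackey-type analysis after pulling back along a common Galois cover before the Lie-Goursat reduction yields an isomorphism of adjoint local systems.
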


In fact, we prove that there are infinitely many such $G$-functions of differential order 2, even up to reparametrizations of the coordinate $z$; this is \cref{thm: infinitely-many-examples}, which answers a question of Daan Krammer's.

As an explicit example, we show in \cref{prop:Krammer-example} that the series expansion at~$0$ of all solutions in $\Qbar[\![z]\!]$ to the differential equation  
\begin{equation}\label{eq:counter-example}
     P(z)F''(z)+\frac{P'(z)}{2}F'+\frac{z-10}{18}F=0,
\end{equation}
where $P(z)=(z-1)(z-2)(z-82)$, are counterexamples. In particular, one such solution is the $G$-function 
\[
F(z) = 1-\frac{5}{2952}z^2-\frac{889}{726192}z^3-\frac{1851985}{2143718784}z^4-\frac{110984489}{175784940288}z^5-\cdots. 
\]

\begin{rmk} The André--Chunovsky--Katz theorem that the minimal differential equation of a $G$-function has regular singularities and rational exponents, as summarized in \cite[\S3]{Andre-Gevrey-I}, implies that every $G$-function of differential order~$1$ is algebraic, so the counterexamples to \cref{siegelproblem} in the above theorem have the smallest possible order. One of the main points of the present work is to provide examples of minimal order, which may moreover be explicitly computed as above.
\end{rmk}

\subsection{A sketch of proof}\label{section:proofsketch} 
Our strategy to prove \cref{thm:gfunction} begins the same way as that of Fres\'an and Jossen \cite{fresanjossen} in the case of $E$-functions, in that we reformulate the problem in terms of Tannakian categories. More precisely, we want to construct two Tannakian categories~$\mbf{H} \subset \mbf{G}$ and reduce to proving that this inclusion is strict. 

Following Andr\'e \cite{andre-g-functions}, we define $\mbf{G}$ as the category of $\Qbar(z)\langle\partial_z\rangle$-modules arising from algebraic geometry in \cref{defn:G}. The construction of~$\mathbf{H}$ is slightly trickier than that from \cite{fresanjossen}, which is simply generated by the differential equations of hypergeometric $E$-functions, since we must consider $G$-functions of the form~$F(\lambda(z))$ where $F$ are hypergeometric functions and $\lambda(z)$ algebraic functions. Instead, we work with the Tannakian category $\mbf{H}$ in \cref{defn:H} generated by~$\Qbar(z)\langle\partial_z\rangle$\nobreakdash-modules of the form $\pi_{2*}\pi_1^*\mc{H}$, built from hypergeometric connections with regular singularities~$\mc{H}$ using correspondences
    \begin{equation}\label{eqn:correspintro}
    \begin{tikzcd}[column sep=small]
        & C \arrow[dl, swap, "\pi_1"] \arrow[dr, "\pi_2"] & \\
        \mb{P}^1_{\lambda} & & \mb{P}^1_{z},
    \end{tikzcd}
    \end{equation}
where $C$ is a smooth projective curve and $\pi_1$ and $\pi_2$ are finite Galois covers. We deduce from a theorem by Katz \cite[\S5.4]{Katz1990} that $\mbf{H}$ is a full subcategory of $\mbf{G}$.

At this point, our strategy for showing that $\mbf{H}$ is not the whole $\mbf{G}$  diverges from that of Fres\'an and Jossen. Roughly, they prove that the similar inclusion $\mbf{H}\subset \mbf{E}$ is strict by an ingenious analysis of the geometry of the singularities of the \emph{Fourier transforms} of objects of their category $\mbf{H}$. Because of the algebraic pullbacks in our definition of~$\mbf{H}$, the singularities tell us almost nothing in the present situation. 

Our strategy is then to find some other invariant of objects of $\mbf{H}$. We do so by analyzing the \emph{trace fields} of objects of $\mbf{H}$ (more precisely, their \emph{adjoint trace fields}, which have better properties; this difference will be suppressed in the introduction). Under the Riemann--Hilbert correspondence, an object of $\mbf{H}$ corresponds to a local system on $\mb{P}^1\setminus S$ for some finite subset $S \subset \mathbb{P}^1(\Qbar)$, and we may look at the field generated by traces of elements of~$\pi_1(\mb{P}^1\setminus S)$ acting on the associated representation. 

We first observe that the work of Katz on rigid local systems easily implies that the trace field of a hypergeometric connection $\mc{H}$ is abelian (\cref{prop:trace-field-hyp}). Naively, one may think (as we did initially) that it is enough to find a rank-two connection $\mc{V}$ of geometric origin on~$\mb{P}^1\setminus S$ whose trace field is not abelian. This does not work, since the objects of~$\mbf{H}$ include arbitrary summands of tensor products of hypergeometric local systems, and it is not clear at all that the property of having abelian trace field is invariant under taking summands. 

We get around this by applying a Lie algebra version of Goursat's lemma due to Fres\'an and Jossen, and deduce that if a simple object $\mc{V}$ of $\mbf{G}$ belongs to $\mbf{H}$, then it is Lie-generated by a single hypergeometric connection $\mc{H}$, under a correspondence as in~\eqref{eqn:correspintro}. We then apply the computation by Beukers and Heckman (quoted as \cref{thm:beukersheckman} below) of the differential Galois groups of all regular-singular hypergeometric connections; this allows us to deduce that, if $\mc{V}$ is of rank two with the maximal possible differential Galois group, then the same must be true of $\mc{H}$. 

From here, we can apply the trace field argument as outlined above using certain techniques related to the adjoint trace fields of rank-two local systems and Fuchsian groups (\cref{lemma:comminvariance}). Therefore, to conclude the proof of \cref{thm:gfunction}, it suffices to find such a $\mc{V}$ with non-abelian trace field. Indeed, one can find many examples coming from rational Shimura curves.

\subsection{Previous work}
Siegel's problem for $G$-functions is closely related to a conjecture by Dwork \cite[Conj.~7.4]{dwork1990differential}, which asked whether every $G$-function of order~2 with infinite monodromy may be written as 
\[
\mu_1(z)\cdot _2F_1[a_1, a_2; b_1\,|\, \lambda(z)]+\mu_2(z) ,
\]
for some algebraic functions $\lambda(z), \mu_1(z), \mu_2(z)$.  A counterexample to Dwork's conjecture was first found by Krammer \cite{krammer} by considering a Shimura curve and crucially using   Takeuchi's classification \cite[Table~1]{takeuchi1977commensurability} of arithmetic triangle groups. Several more counterexamples were then given by Bouw and M\"oller \cite{bouwmoller}, coming from rational Teichm\"uller curves. Our trace field criterion provides yet more counterexamples to Dwork's conjecture. With our techniques,  we can also prove that the examples of Krammer and Bouw and M\"oller negatively answer  \cref{siegelproblem}.

 As already pointed out by Krammer \cite[\S12]{krammer} in 1996,  his work provides only finitely many counterexamples, up to reparametrizations of~$\mb{P}^1$,  to Dwork's conjecture; the same is true of that of Bouw and M\"oller. In \cref{sec:infinite}, we show that there are infinitely many ``non-hypergeometric'' $G$-functions (even up to reparametrizations of~$\mb{P}^1$), thereby answering the question raised by Krammer. These are provided by local systems studied by Lam and Litt \cite{lamlitt}, and applying the Andr\'e--Pink--Zannier conjecture in certain cases that were proven by Richard and~Yafaev \cite{richard2021generalised}. 
 In a recent work \cite{vvZ}, van Hoeij, van Straten, and Zudilin computed explicitly the $G$-function corresponding to one of these local systems, and also suggested that these give new counterexamples to  Dwork's conjecture. 

As mentioned above, Fischler and Rivoal \cite{fischlerrivoal} provided a negative answer to a weaker version of \cref{siegelproblem}, assuming the Grothendieck period conjecture. While the present paper was in preparation, Dreyfus and Rivoal raised a generalization of \cref{siegelproblem} in \cite[Question 3]{Rivoal-new-G-function}, and gave negative answers to weaker versions of both the original and the generalized question. More precisely, for each integer $n$ they construct a $G$-function that is not a polynomial expression with coefficients in $\overline{\mb{C}(x)}$ of solutions of differential operators with $n$ singularities, and from this they deduce counterexamples under the additional assumption that the~$\lambda$'s are rational functions with denominators of uniformly bounded degree. 

\subsection{Organization} This paper is organized as follows. In \cref{sec:recollections}, we collect some basic properties of $G$-functions, trace fields, and hypergeometric connections. In \cref{sec:reformulation}, we construct the Tannakian categories~$\mathbf{H}\subset \mathbf{G}$ and reduce \cref{siegelproblem} to demonstrating that this inclusion is strict. Using the trace field argument, \cref{thm:gfunction} is proved in \cref{sec:proof}, where we provide counterexamples derived from Shimura and Teichmüller curves. Finally, in \cref{sec:infinite}, we construct infinitely many non-hypergeometric $G$-functions of order $2$ using the generalized Andr\'e\nobreakdash--Pink\nobreakdash--Zannier conjecture for Shimura varieties of abelian type from \cite{richard2021generalised}. Given that this paper relies on results and techniques from various areas, we tried to include enough background material to make it as self-contained as~possible.

\subsection{Notation}
Throughout, $\Qbar$ denotes the algebraic closure of $\mathbb{Q}$ in $\mathbb{C}$. A number field is called \textit{abelian} if its Galois closure has abelian Galois group. 

We use calligraphic letters such as $\mc{V}$ to denote the vector bundle underlying a connection, and bold letters such as $\mb{V}$ to denote local systems. 

For a morphism $f\colon X\to Y$ of smooth complex varieties, we use $f^*$ and $f_*$ for the pushforward and pullback of $\mathscr{D}$-modules, as in \cite[\S 5.1]{Katz1990}. For $\mbf{T}$ a Tannakian category and an object $M$ in $\mbf{T}$, we denote by $\langle M\rangle ^{\otimes}$ the Tannakian subcategory of~$\mbf{T}$ generated by $M$.

\subsection*{Acknowledgements}  
 This project was initiated after a talk by the first author (and attended by the second and third authors) at the workshop \emph{Hodge theory, tropical geometry, and o-minimality} organized by Omid Amini and Bruno Klingler in October~2023 in Berlin; we thank them heartily for providing a fantastic working environment during this workshop. We are grateful to Alin Bostan, Thomas Krämer, Raju Krishnamoorthy, and Daniel Litt for enlightening discussions.

Fresán was partially supported by the grant ANR-23-CE40-0011 of the Agence Nationale de la Recherche. Lam was supported by a Dirichlet Fellowship and the DFG Walter Benjamin grant \emph{Dynamics on character varieties and Hodge theory} (project number: 541272769). Qin was supported by the European Research Council (ERC) under the European Union’s Horizon 2020
research and innovation program (grant agreement no.\,101020009, project TameHodge).

\section{Preliminaries on hypergeometric connections}\label{sec:recollections}

After briefly recalling the definitions of $G$-functions and trace fields, we gather in Proposition \ref{prop:trace-field-hyp} and Theorem \ref{thm:beukersheckman} the properties of hypergeometric connections and their associated local systems on which the proof of the main theorem relies. 

\subsection{\texorpdfstring{$G$}{G}-functions and \texorpdfstring{$G$}{G}-operators}

\begin{defn}\label{defn:g-function}
    A \emph{$G$-function} is a formal power series with algebraic coefficients
    \[
   G(z)=\sum_{n=0}^{\infty} a_n z^n \in \Qbar[\![z]\!] 
    \]
    that is annihilated by a non-zero differential operator $L\in \Qbar[z]\langle\partial_z\rangle$ and satisfies the following growth conditions: for each $n\geq 1$, writing $d_n\geq 1$ for the smallest integer such that~$d_na_1, \dots , d_na_n$ are algebraic integers, there exists a real number $C>0$ such that $|\sigma(a_n)|\leq C^n$ and $d_n\leq C^n$ hold for all $\sigma \in \Gal(\Qbar/\mb{Q})$ and all $n \geq 1$.
    \end{defn}
 
The set of $G$-functions is closed under addition and multiplication; this is more or less straightforward from the definition and was already observed in \cite{siegel2014einige}. It is also closed under the Hadamard product 
\[
(F\star G)(z) := \sum_{n=0}^{\infty} a_nb_nz^n
\] for $F(z)=\sum_{n=0}^\infty a_nz^n$ and $G(z)=\sum_{n=0}^\infty b_nz^n$, as proved in {\cite[Thm.\,D]{andre-g-functions}}. 

\begin{defn}\label{def:functions-with-monodromy}
Let $\mb{C}\ldblbrace z\rdblbrace$ denote the subring of $\mb{C}[\![z]\!]$ consisting of power series with positive radius of convergence. Write $\mc{M}$ for the differential algebra of \emph{convergent Laurent series with monodromy}  \cite[\S1.1]{fresanjossen}, defined as 
\[
\mc{M}:= \mb{C}\ldblbrace z\rdblbrace [(z^a)_{a\in \mb{C}}, \log(z)].
\]
We denote by $\mc{G}$ the differential subalgebra of $\mc{M}$ consisting of complex linear combinations of elements of the form 
\[
z^{a_i} \log(z)^{b_i} F_i(z), 
\]
for rational numbers $a_i$, integers $b_i\geq 0$, and $G$-functions~$F_i$.  
\end{defn}

\begin{defn} A differential operator $L \in \mathbb{Q}(z)\langle\partial_z\rangle$ is called a \textit{$G$-operator} if $L$ admits a basis of solutions in the algebra $\mathcal{G}$. 
\end{defn}

See \cite[\S1.7]{fresanjossen} for the equivalence of this definition with the more Diophantine one in terms of Galochkin's condition which is usually found in the literature. 

\subsection{Trace fields and adjoint trace fields}

\begin{defn}\label{defn:trace-fields}
    Let $\Gamma$ be a finitely generated group and $\rho\colon \Gamma \rightarrow \GL_n(\mb{C})$ a complex representation of $\Gamma$.
    \begin{enumerate} 
    \item For any automorphism $\sigma \in \Aut(\mb{C})$, let $\sigma(\rho)$ denote the composition 
    \[
    \Gamma \stackrel{\rho}{\longrightarrow} \GL_n(\mb{C}) \stackrel{\sigma}{\longrightarrow} \GL_n(\mb{C}).
    \] The \emph{field of definition} of $\rho$ is $k:=\mb{C}^{G}$, where $G \subset \Aut(\mb{C})$ denotes the subgroup defined as $G:=\{\sigma \in \Aut(\mb{C})\,|\,\sigma(\rho)\simeq \rho\}$. 
    
    \item The  \emph{trace field} of $\rho$ is the  subfield of $\mb{C}$ generated by the traces of $\rho(\gamma)$, \ie 
    \[
    \mb{Q}\big(\{\tr\, \rho(\gamma)\,|\,\gamma \in \Gamma\}\big)\subset \mb{C}. 
    \] 
    We emphasize that, in our setup, the trace field is naturally a subfield of~$\mb{C}$; in what follows, when we say that two trace fields are equal we mean that they are the same subfield of $\mb{C}$, and not just abstractly isomorphic.
    
    \item The \emph{adjoint trace field} of $\rho$ is the trace field of the adjoint representation \[
    \ad(\rho)\colon  \Gamma\rightarrow \GL(\mf{gl}_n).\]
    Note that as a representation of $\Gamma$, $\ad(\rho)$ decomposes as $\mathrm{triv}\oplus \ad^0(\rho)$, where $\rm{triv}$ denotes the trivial representation and $\ad^0(\rho)$ the representation of $\Gamma$ on~$\mf{sl}_n\subset \mf{gl}_n$, the subspace of traceless matrices. Then the adjoint trace field of~$\rho$ is also equal to the trace field of $\ad^0(\rho)$.
    \end{enumerate}
\end{defn}

\begin{rmk}\label{rem:adjointandtracefields}
    The adjoint trace field is contained in the trace field. Indeed, let $\gamma \in \Gamma$ and suppose that $\rho(\gamma)$ has generalized eigenvalues $\alpha_1, \dots , \alpha_n$. Then the endomorphism $\ad(\rho)(\gamma)$ has trace 
    \[
    \sum_{1 \leq i,j \leq n}\alpha_i\alpha^{-1}_j = (\alpha_1+\cdots +\alpha_n)(\alpha_1^{-1}+\cdots +\alpha_n^{-1})=\tr\, \rho(\gamma)\cdot \tr\,\rho(\gamma^{-1}).
    \]
\end{rmk}

\begin{rmk}
We highlight that, for $\Gamma$ and $\rho$ as above, there is another natural field which is the minimal field $\mb{K}\subset \mb{C}$ such that  $\rho$ has a model over $\mb{K}$. This in general strictly contains the trace field of $\rho$---there is a Brauer obstruction class.  There seems to be some discrepancies in the literature about the naming of these fields: we refer the reader to the discussion in \cite{fields_of_definition}. In any case, we will not use the auxiliary field $\mb{K}$ in this paper, and hope that no confusions will be caused. 
\end{rmk}
\begin{lem}\label{prop:field of defn}
    The field of definition of a semisimple representation $\rho$ coincides with its trace field.
\end{lem}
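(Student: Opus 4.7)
The plan is to show the two inclusions separately. Write $T \subset \mb{C}$ for the trace field of $\rho$ and $k = \mb{C}^G \subset \mb{C}$ for its field of definition, where $G = \{\sigma \in \Aut(\mb{C}) \,|\, \sigma(\rho) \simeq \rho\}$.

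First I would prove $T \subset k$. For any $\sigma \in G$, the isomorphism $\sigma(\rho) \simeq \rho$ of representations forces their characters to agree pointwise, i.e., $\sigma(\tr\,\rho(\gamma)) = \tr\,\sigma(\rho)(\gamma) = \tr\,\rho(\gamma)$ for every $\gamma \in \Gamma$. Hence $G$ fixes every generator of $T$, so $T \subset \mb{C}^G = k$.

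For the reverse inclusion $k \subset T$, I would argue by contrapositive: it suffices to show that for every $\alpha \in \mb{C} \setminus T$ there exists some $\sigma \in \Aut(\mb{C}/T)$ with $\sigma(\alpha) \neq \alpha$ and $\sigma \in G$. The $\sigma \in G$ part will be free from the following classical fact (Brauer--Nesbitt, or Jacobson density in the semisimple complex setting): two finite-dimensional semisimple representations of $\Gamma$ over $\mb{C}$ are isomorphic if and only if their characters coincide. Since $\sigma(\rho)$ is again semisimple (applying an automorphism of $\mb{C}$ preserves direct sum decompositions and irreducibility) and has character $\sigma \circ \chi_\rho$, any $\sigma$ fixing $T$ automatically gives $\sigma(\rho) \simeq \rho$. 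So the task reduces to the purely Galois-theoretic statement $\mb{C}^{\Aut(\mb{C}/T)} = T$.

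To establish this last statement, I would construct, given $\alpha \in \mb{C} \setminus T$, a moving automorphism of $\mb{C}$ over $T$. If $\alpha$ is algebraic over $T$, its minimal polynomial has degree at least two and, being in characteristic zero, has a distinct second root $\beta \in \mb{C}$; define an isomorphism $T(\alpha) \to T(\beta)$ over $T$ sending $\alpha \mapsto \beta$. If $\alpha$ is transcendental over $T$, pick any other $\beta \in \mb{C}$ transcendental over $T$ (there are continuum-many) and send $\alpha \mapsto \beta$. In either case, extend the resulting embedding $T(\alpha) \hookrightarrow \mb{C}$ to an automorphism of $\mb{C}$ using that $\mb{C}$ is an algebraic closure of a purely transcendental extension of $T(\alpha)$, via a standard Zorn's lemma argument. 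This yields the desired $\sigma \in \Aut(\mb{C}/T)$ with $\sigma(\alpha) \neq \alpha$, completing the proof.

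The only real obstacle is making sure that the semisimplicity hypothesis on $\rho$ is used correctly in the character-determines-representation step; everything else is a routine Galois-theoretic chase. Since this is the standard statement used implicitly in rigid local systems arguments, I would present it compactly rather than reprove Brauer--Nesbitt.
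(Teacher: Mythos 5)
Your proof is correct and follows essentially the same route as the paper's: one inclusion is the trivial character identity, and the other uses that semisimple representations in characteristic zero are determined by their characters (the paper cites Bourbaki where you invoke Brauer--Nesbitt) to show $\Aut(\mb{C}/T)\subset G$. The only difference is that you spell out the auxiliary Galois-theoretic fact $\mb{C}^{\Aut(\mb{C}/T)}=T$, which the paper takes as standard.
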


\begin{proof}
Let $K$ and $F$ be the trace field and field of definition of $\rho$, respectively. We keep the notation $G\subset \Aut(\mb{C})$ from \cref{defn:trace-fields}. Since $\sigma(\rho)\simeq \rho$ holds for $\sigma\in G$, we deduce $\tr \, \rho(\gamma) = \tr \, \sigma(\rho)(\gamma)=\sigma (\tr \, \rho(\gamma))$, and hence the inclusion $K\subset F$. 

    On the other hand, recall from \cite[\S 20 6, VIII.376, Cor.\,(a)]{bourbakialg8} that two semisimple representations over a field of characteristic zero are isomorphic if and only if they have the same characters. This implies that $\tau(\rho)$ is isomorphic to $\rho$ for all $\tau\in \Aut(\mb{C}/K)$, and hence $\Aut(\mb{C}/K)\subset G$. Therefore, $K=F$, as required.
\end{proof}

\begin{lem}[Maclachan--Reid]\label{lemma:comminvariance}
Let $\Gamma$ be a finitely generated subgroup of $\SL_2(\mb{C})$, and $\rho\colon \Gamma\xhookrightarrow{} \SL_2(\mb{C})$ the tautological representation. Suppose $\rho$ is Zariski dense and not unitary. Then, the adjoint trace field of $\rho$ is an invariant of the commensurability class of $\Gamma$, \ie it is invariant under passing to finite index subgroups.
\end{lem}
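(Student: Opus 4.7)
The key first observation is that for $\rho$ taking values in $\SL_2(\mb{C})$, the Cayley--Hamilton identity $\rho(\gamma)^{-1}=\tr(\rho(\gamma))\cdot I-\rho(\gamma)$ yields $\tr(\rho(\gamma^{-1}))=\tr(\rho(\gamma))$ for every $\gamma\in\Gamma$. Combined with \cref{rem:adjointandtracefields}, this gives $\tr(\ad(\rho)(\gamma))=\tr(\rho(\gamma))^2$, so the adjoint trace field of $\rho$ coincides with
\[
k\Gamma^{(2)}:=\mb{Q}\bigl(\{\tr(\rho(\gamma))^2 : \gamma\in\Gamma\}\bigr),
\]
the classical \emph{invariant trace field} of $\Gamma$ as a Kleinian group. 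The lemma is then the commensurability invariance of $k\Gamma^{(2)}$, a classical theorem of Reid proved in Maclachlan--Reid's book. My plan is to reproduce the main steps of that argument.

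\textbf{Step 1: Normalization.} Given a finite index subgroup $\Gamma'\subset\Gamma$, replacing it with its normal core I may assume $\Gamma'\triangleleft\Gamma$ is normal of some index $n$. Both hypotheses (Zariski density and non-unitarity) automatically descend to $\Gamma'$.

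\textbf{Step 2: Easy inclusion.} The inclusion $k\Gamma'^{(2)}\subset k\Gamma^{(2)}$ is immediate from the definitions.

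\textbf{Step 3: Reverse inclusion.} Fix $\gamma\in\Gamma$. Since $\Gamma/\Gamma'$ has order $n$, Lagrange gives $\gamma^n\in\Gamma'$, so $\tr(\rho(\gamma^n))^2\in k\Gamma'^{(2)}$. Writing eigenvalues of $\rho(\gamma)$ as $\lambda,\lambda^{-1}$ and setting $u=\tr(\rho(\gamma))^2$, the Chebyshev relation $\tr(\rho(\gamma^n))=\lambda^n+\lambda^{-n}$ combined with parity (even for $n$ even, odd for $n$ odd) shows that $\tr(\rho(\gamma^n))^2$ is a polynomial in $u$ with integer coefficients. Hence $u$ is algebraic over $k\Gamma'^{(2)}$. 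To upgrade algebraicity to genuine membership, I would repeat the argument for the products $\gamma\delta$ as $\delta$ varies in $\Gamma'$, using the Fricke identity
\[
\tr(AB)+\tr(AB^{-1})=\tr(A)\tr(B)\qquad(A,B\in\SL_2(\mb{C}))
\]
to generate a sufficiently rich system of polynomial relations with coefficients in $k\Gamma'^{(2)}$, which pins down $u$ uniquely. Zariski density of $\Gamma$ forces that of $\Gamma'$ and supplies enough loxodromic auxiliary elements $\delta$, while the non-unitary hypothesis prevents the degenerate situation in which all traces of $\rho$ lie in $[-2,2]$, a regime where the Fricke-type relations collapse.

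\textbf{Main obstacle.} The subtle point is precisely the passage, in Step 3, from the algebraic relation satisfied by $\tr(\rho(\gamma))^2$ over $k\Gamma'^{(2)}$ to actual membership in this field: a single equation is not enough. This requires constructing and inverting a system of Fricke relations using several auxiliary elements of $\Gamma'$ simultaneously, and it is exactly at this stage that the combination of Zariski density and non-unitarity is crucial.
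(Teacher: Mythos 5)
Your initial reduction — identifying the adjoint trace field with the invariant trace field $k\Gamma^{(2)}=\mb{Q}(\{\tr(\rho(\gamma))^2\})$ via Cayley--Hamilton — is correct, and is essentially the same reduction the paper makes (it uses the equivalent identity $\tr(\gamma^2)=-2+\tr(\mathrm{Ad}(\gamma))$). After this point, however, your route diverges from the paper's and your proof has a genuine gap.

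The paper does \emph{not} reprove the commensurability invariance of $k\Gamma^{(2)}$; it cites \cite[Thm.\,3.3.4, Lem.\,3.5.6]{maclachlan2003arithmetic}. The only substantive work in the paper's proof is to verify the hypothesis of those results, namely that $\Gamma$ is \emph{non-elementary}: one shows that a finite-index subgroup of $\Gamma$ cannot fix a point of $\mb{H}^3\cup\hat{\C}$, because fixing a point of $\mb{H}^3$ would contradict non-unitarity of $\rho$ while fixing a point of $\hat{\C}$ would contradict Zariski density. This is where the two hypotheses in the statement are consumed. Your proposal never carries out this verification; you only gesture vaguely in Step~3 at why the hypotheses ``should'' help.

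Instead, you attempt to reprove the Maclachlan--Reid theorem from scratch, and that attempt is incomplete. Steps~1--2 are fine. In Step~3, the Chebyshev/parity argument correctly shows that $u=\tr(\rho(\gamma))^2$ is \emph{algebraic} over $k\Gamma'^{(2)}$, but as you yourself note in the ``Main obstacle'' paragraph, this falls short of membership $u\in k\Gamma'^{(2)}$, which is what the lemma asserts. The passage from algebraicity to equality of fields is precisely the content of the Maclachlan--Reid argument, and it is not a formality: it relies on producing, from non-elementarity (equivalently from your Zariski-density/non-unitarity hypotheses), a pair of elements of $\Gamma'$ generating an irreducible subgroup and then exploiting the Fricke identity $\tr(AB)+\tr(AB^{-1})=\tr(A)\tr(B)$ in a specific inductive way. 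Declaring this ``the main obstacle'' without resolving it leaves the proof unfinished at its crux. You would either need to write out that argument in full (including verifying non-elementarity, which you need to produce the irreducible pair), or — as the paper does — verify non-elementarity explicitly and then simply invoke the Maclachlan--Reid results.
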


\begin{proof}
   This is essentially \cite[Lem.\,3.5.6]{maclachlan2003arithmetic}, as we now explicate. Our group $\Gamma$ is non-elementary \cite[Def.\,1.2.1]{maclachlan2003arithmetic}, \ie its image in $\mathrm{PSL}_2(\mb{C})$  does not have a finite orbit in its action on the extended hyperbolic $3$-space $\mathbb{H}^3\cup \hat{\mathbb{C}}$. Indeed, this amounts to saying that no finite index subgroup of $\Gamma$ fixes a point in $\mb{H}^3\cup \hat{\C}$.  For the sake of contradiction, suppose that a finite index subgroup $\Gamma'\subset \Gamma$ fixes a point~$x\in \mb{H}^3\cup \hat{\C}$. Recall that, for the action of~$\mathrm{SL}_2(\mb{C})$ on $\mb{H}^3$ and $\hat{\C}$,  the stabilizer of $x\in \mb{H}^3$ is~$\mathrm{SU}(2)$, and the stabilizer of~$x\in \hat{\mb{C}}$ is a Borel subgroup. The first case implies that~$\rho$ is unitary, and the second implies that $\Gamma$ is not Zariski dense. As both of these contradict the assumptions, $\Gamma$ is non-elementary. 
   
   It then follows from  \cite[Thm.\,3.3.4, Lem.\,3.5.6]{maclachlan2003arithmetic} that $\mb{Q}(\{\tr(\gamma^2)\,|\,\gamma \in \Gamma\})$ is an invariant of the commensurability class, which is called the \emph{invariant trace field} in~\cite[Def.\,3.3.6]{maclachlan2003arithmetic}. But for a subgroup of $\SL_2(\mb{C})$, the adjoint trace field of $\rho$ is precisely this invariant trace field because of the relation $\tr(\gamma^2)=-2+\tr(\mathrm{Ad}(\gamma))$.
\end{proof}

\subsection{Hypergeometric connections}\label{sec:hypergeometric}

We briefly review some facts and conventions regarding hypergeometric connections. Let $n \geq 1$ be an integer and $a, b \in \mathbb{Q}^n$. The \emph{hypergeometric differential operator} with parameters $a$ and $b$ is 
\[
    \mathrm{Hyp}(a;b):=\prod_{i=1}^{n} (z\partial_z+b_i-1)-  z \prod_{j=1}^{n} (z\partial_z+a_j). 
\]
The hypergeometric functions $_{p+1}F_p[a_1, \dots , a_{p+1}; b_1, \dots , b_p\,|\,z]$ from \eqref{eqn:explicithyp} in the introduction are annihilated by the operators $\mathrm{Hyp}(a_1,\dots,a_{p+1};b_1,\dots,b_p,1)$. Thanks to the closure under Hadamard products, to show that  hypergeometric functions are~$G$\nobreakdash-functions, it suffices to show that series of the form 
\[
\sum_{n=1}^\infty \frac{(a)_n}{(b)_n}z^n
\]
are $G$-functions for $a \in \mb{Q}$ and $b \in \mb{Q} \setminus \mb{Z}_{\leq 0}$. Following Siegel \cite[Satz 5]{siegel2014einige}, this reduces to showing that the least common multiple of $1, 2, \dots , n$ is bounded by $C^n$ for some constant~$C$, which is a weak form of the prime number theorem.

Associated with  the hypergeometric differential operator $\mathrm{Hyp}(a;b)$ there is a connection $\mathcal{H}(a;b)$ of rank $n$ on $\mathbb{P}^1\backslash\{0,1,\infty\}$, called a \emph{hypergeometric connection}. It has regular singularities at $0,1,\infty$, and  is irreducible if and only if $a_i-b_j \not\in\mathbb{Z}$ for all indices $i$ and $j$ by \cite[Cor.\,3.2.1]{Katz1990}; by the Riemann--Hilbert correspondence, there is an associated local system on $\mb{P}^1\setminus \{0,1,\infty\}$, which we denote by $\mb{H}(a;b)$.

Assume now that $n=p+1$ for $p \geq 0$ and $b_{p+1}=1$. We set $\alpha_i=\exp(2\pi\sqrt{-1}a_i)$ and $\beta_i=\exp(2\pi\sqrt{-1}b_i)$, draw based loops around $0,1, \infty$ as in \cite[p.\,328]{beukersheckman}, and denote by $g_0, g_1, g_{\infty}$ the monodromy matrices of $\mb{H}(a;b)$ along these loops. (Notice that the roles of $\alpha_i,\beta_j$ and $a_i,b_j$ are interchanged compared to the convention  in~\cite{beukersheckman}.) 
 
\begin{prop}[{\cite[Prop.\,3.2, Thm.\,3.5]{beukersheckman}}]\label{prop:hyp local mono}
    The elements  $g_0, g_1, g_{\infty}$ satisfy
    \begin{align*}
        det(t-g_{\infty}) &= \prod_{i=1}^n (t-\alpha_i),\\
         det(t-g_{0}^{-1}) &= \prod_{i=1}^n (t-\beta_i),\\
         \mathrm{rk}(g_1-\mathrm{Id})&=1, \ \det(g_1)=\exp(2\pi \sqrt{-1}\sum_{i=1}^n (b_i-a_i)).
    \end{align*}
Moreover, for each generalized eigenvalue of $g_{\infty}$, there is a unique Jordan block with this generalized eigenvalue, and the same is true of $g_0$.       
\end{prop}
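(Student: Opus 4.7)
The approach is to extract each piece of information from a local analysis at one of the three singularities $z=0, 1, \infty$, and then use the topological identity $g_\infty g_1 g_0 = \mathrm{Id}$ in $\pi_1(\mb{P}^1\setminus\{0,1,\infty\})$ to tie the pieces together. I rewrite the operator as $\mathrm{Hyp}(a;b) = P(\theta) - z\, Q(\theta)$ where $\theta = z\partial_z$, $P(u) = \prod_i(u+b_i-1)$, and $Q(u) = \prod_i(u+a_i)$, both monic of degree $n$.

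The case $z=0$ is easiest: the term $z Q(\theta)$ is subleading, so the indicial polynomial is $P(u)$, with roots $1-b_i$. Frobenius theory produces $n$ local solutions whose monodromy has eigenvalues $\exp(2\pi\sqrt{-1}(1-b_i)) = \beta_i^{-1}$, giving $\det(t - g_0^{-1}) = \prod_i(t - \beta_i)$. For the uniqueness of the Jordan block per generalized eigenvalue, the key point is that the hypergeometric operator admits a cyclic vector at $0$: the solutions sharing a common exponent mod $\mathbb{Z}$ are built iteratively from logarithmic corrections to a single seed, so the monodromy acts on them by a single Jordan block. The case $z=\infty$ is analogous after substituting $w = 1/z$: the roles of $P$ and $Q$ swap (the operator, after multiplication by $w$, becomes $w P(-\eta) - Q(-\eta)$ with $\eta = w\partial_w$), yielding eigenvalues $\alpha_i$ for $g_\infty$, where the sign flip from $\theta = -\eta$ is compensated by the reversed orientation of the loop at infinity.

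The heart of the argument is the analysis at $z=1$, where I would use the rewriting
\begin{equation*}
\mathrm{Hyp}(a;b) \,=\, R(\theta) + (1-z)\,Q(\theta), \qquad R \,=\, P - Q,
\end{equation*}
whose crucial feature is that $R$ has degree at most $n-1$ because the leading monic terms of $P$ and $Q$ cancel. Classical regular-singular-point theory then shows that $n-1$ of the $n$ local exponents at $z=1$ are the non-negative integers $0, 1, \ldots, n-2$, giving $n-1$ linearly independent holomorphic solutions there. These solutions are fixed by $g_1$, so $g_1$ stabilizes an $(n-1)$-dimensional subspace, forcing $\mathrm{rk}(g_1-\mathrm{Id}) \leq 1$. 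Finally, the product formula $\det(g_0)\det(g_1)\det(g_\infty) = 1$ arising from the fundamental group relation, combined with the computations at $0$ and $\infty$, pins down $\det(g_1) = \prod \beta_i \cdot \prod \alpha_i^{-1} = \exp(2\pi\sqrt{-1}\sum_i(b_i - a_i))$; whenever this quantity is different from $1$, the rank is exactly~$1$.

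The main obstacle is the analysis at $z=1$: one must verify that the $n-1$ integer local exponents yield genuine holomorphic solutions, with no resonance producing log-term obstructions, and identify the remaining exponent as $\sum(b_i-a_i)$ modulo $\mathbb{Z}$. This hinges on a delicate expansion of $R(\theta) + (1-z)Q(\theta)$ near $z=1$ and on exploiting the explicit form of $R = P-Q$ of degree $\leq n-1$; all other steps reduce to standard local Fuchsian theory and the topological interpretation of the monodromy generators.
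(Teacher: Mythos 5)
The paper does not prove this proposition; it is quoted directly from Beukers--Heckman, who establish it via Levelt's theorem. Your proposal attempts a from-scratch local analysis. The decomposition at the three singular points, the indicial computations at $0$ and $\infty$, and the determinant bookkeeping via $g_\infty g_1 g_0 = \mathrm{Id}$ are sound in outline, but two of the central claims are not actually proved by your sketch.

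The Jordan-block uniqueness at $0$ and $\infty$ does not follow from the cyclic-vector observation. Every scalar operator admits a cyclic vector, yet $\theta^2 - \tfrac14$ has exponents $\pm\tfrac12$ at $z=0$ and local monodromy $-\mathrm{Id}$, i.e.\ two Jordan blocks for the same eigenvalue. What rules this out for the hypergeometric is Levelt's theorem, and that result uses the irreducibility hypothesis $a_i - b_j \notin \mathbb{Z}$ in an essential way --- a hypothesis your sketch never invokes (it is implicit in the Beukers--Heckman citation). Likewise at $z=1$: the indicial roots $0, 1, \ldots, n-2, \gamma$ do come for free from the first-order vanishing of the leading coefficient, but it is \emph{not} ``classical regular-singular-point theory'' that the $n-1$ integer roots yield $n-1$ genuinely holomorphic solutions; a Frobenius resonance among them can force a logarithm and cut the count, and ruling this out for $\mathrm{Hyp}(a;b)$ is exactly where the substance (and again the irreducibility hypothesis) enters. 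You flag this as the main obstacle, which is fair, but it is the heart of the matter rather than a routine verification. Finally, your determinant argument yields $\mathrm{rk}(g_1-\mathrm{Id})=1$ only when $\det(g_1)\neq 1$; when $\gamma\in\mathbb{Z}$ you still need a separate argument (e.g.\ that $g_1=\mathrm{Id}$ would force $g_\infty=g_0^{-1}$, hence abelian and therefore reducible monodromy for $n\geq 2$).
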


We prove a result on the adjoint trace fields of rank-two hypergeometric local systems for later use. We write $\mb{H}(a_1, a_2; b_1,1)$ for the rank-two local system \hbox{on~$\mb{P}^1\setminus \{0, 1, \infty\}$} corresponding to the connection $\mc{H}(a_1, a_2; b_1,1)$. 
\begin{prop}\label{prop:trace-field-hyp}
     The adjoint trace field $F$ of an irreducible hypergeometric connection $\mb{H}(a_1, a_2; b_1,1)$ satisfies:
    \begin{enumerate}
        \item $F$ is contained in  a cyclotomic field; 
        \item $F$ cannot be $\mb{Q}(\sqrt{D})$ for any odd squarefree integer $D\geq 7$. 
    \end{enumerate}
\end{prop}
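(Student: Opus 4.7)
For (1), the plan is to combine \cref{prop:field of defn}, which identifies the trace field of a semisimple representation with its field of definition (and our hypergeometric is irreducible by assumption), with the explicit Levelt description of the monodromy in terms of the roots of unity $\alpha_1,\alpha_2,\beta_1$ (see Beukers--Heckman \cite[\S 3]{beukersheckman}). This realizes the representation over the cyclotomic field $\mb{Q}(\alpha_1,\alpha_2,\beta_1)$, so the trace field---and hence the adjoint trace field by \cref{rem:adjointandtracefields}---is cyclotomic.

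For (2), first note that $\mathrm{ad}^0$ is self-dual via the trace form, so $F$ is totally real; combined with (1), $F$ lies in a real cyclotomic field. The plan is to identify the image $\Gamma\subset\mathrm{PGL}_2(\mb{C})$ of the monodromy with a (possibly generalized) triangle group $\Delta(p,q,r)$, where by \cref{prop:hyp local mono} the orders $p,q,r$ of the images of $g_0,g_1,g_\infty$ in $\mathrm{PGL}_2$ equal the orders, as roots of unity, of $\beta_1$, $\beta_1/(\alpha_1\alpha_2)$, and $\alpha_1/\alpha_2$, with the convention that the order is $\infty$ when the relevant element is $1$ (the parabolic case). The Beukers--Heckman classification (\cref{thm:beukersheckman}) then splits into: finite monodromy, handled directly via the Schwarz list; and the Zariski-dense case, in which $\Gamma\cong\Delta(p,q,r)$ and \cref{lemma:comminvariance} applies.

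In the Zariski-dense non-unitary case, standard Fricke identities for two-generator subgroups of $\mathrm{SL}_2$ \cite[Chap.~3]{maclachlan2003arithmetic} yield
\[
F = k\Gamma = \mb{Q}\big(\cos(2\pi/p),\cos(2\pi/q),\cos(2\pi/r),\cos(\pi/p)\cos(\pi/q)\cos(\pi/r)\big).
\]
Suppose for contradiction that $F=\mb{Q}(\sqrt{D})$ with $D\geq 7$ odd squarefree. For each $s\in\{p,q,r\}$, $\cos(2\pi/s)\in F$ forces $\cos(2\pi/s)\in\mb{Q}$: otherwise $\mb{Q}(\cos(2\pi/s))$ would be a real quadratic field equal to $\mb{Q}(\sqrt{D})$, but the only $\mb{Q}(\cos(2\pi/s))$ that are quadratic arise for $s\in\{5,8,10,12\}$ (the cases with $\phi(s)=4$) and give $\mb{Q}(\sqrt{5}),\mb{Q}(\sqrt{2}),\mb{Q}(\sqrt{3})$, none of the form $\mb{Q}(\sqrt{D})$ for $D\geq 7$. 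By Niven's theorem we then have $s\in\{1,2,3,4,6,\infty\}$, so that $\cos(\pi/s)\in\{-1,0,\tfrac12,\tfrac{\sqrt{2}}{2},\tfrac{\sqrt{3}}{2},1\}\subset\mb{Q}(\sqrt{2},\sqrt{3})$. Therefore the product term lies in $\mb{Q}(\sqrt{2},\sqrt{3})$, so $F\subseteq\mb{Q}(\sqrt{2},\sqrt{3})$, whose only real quadratic subfields $\mb{Q}(\sqrt{2}),\mb{Q}(\sqrt{3}),\mb{Q}(\sqrt{6})$ again exclude $D\geq 7$ odd squarefree. The finite-monodromy (Schwarz list) case is handled by direct enumeration, which yields $F\in\{\mb{Q},\mb{Q}(\sqrt{2}),\mb{Q}(\sqrt{3}),\mb{Q}(\sqrt{5})\}$.

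The main obstacle is the identification of $\Gamma$ as an honest triangle group in the Zariski-dense case and the derivation of the explicit formula for $k\Gamma$; both rest on the Beukers--Heckman structure theorem and the standard Fricke theory of two-generator subgroups of $\mathrm{SL}_2$.
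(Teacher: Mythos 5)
Your proof is correct and reaches the same numerical bound as the paper, but by a genuinely different route. For part (1) the paper invokes Katz's physical rigidity: after twisting to trivial determinant, any $\sigma\in\Aut(\mb C)$ fixing the eigenvalues of the three local monodromies fixes the isomorphism class, so the field of definition sits inside the cyclotomic field those eigenvalues generate. You instead appeal to the explicit Levelt/Beukers--Heckman matrix realization over $\mb Q(\alpha_1,\alpha_2,\beta_1)$; that is more concrete, and both are fine. For part (2), up to the step where the orders $\ell,m,r$ (your $p,q,r$) are forced into $\{1,2,3,4,6,\infty\}$ the two arguments coincide (your Niven step is exactly the paper's observation that $\mb Q(\sqrt D)$ with $D\geq 7$ cannot equal any $\mb Q(\zeta+\zeta^{-1})$). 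Thereafter the paper twists to $\mathrm{SL}_2$, observes the eigenvalues all have order dividing $24$, deduces $F\subset\mb Q(\zeta_{24}+\zeta_{24}^{-1})$, and then rules out $\mb Q(\sqrt D)$ with $D\geq 7$ by a discriminant divisibility computation; you instead invoke the Fricke/Takeuchi formula for the invariant trace field of a two-generator subgroup of $\mathrm{SL}_2$ and show all the cosines land in $\mb Q(\sqrt 2,\sqrt 3)$. Since $\mb Q(\zeta_{24}+\zeta_{24}^{-1})=\mb Q(\sqrt2,\sqrt3)$, the two conclusions are the same; your formulation makes the triangle-group picture explicit (which is closer in spirit to Krammer's original argument), while the paper's discriminant argument avoids having to state and justify the two-generator Fricke formula. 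One small economy worth noting: you split into finite-monodromy (Schwarz list) and Zariski-dense cases, but the paper's argument is uniform and does not need that dichotomy---the Fricke trace identities and the field inclusion hold regardless of whether the image is discrete, finite, or indiscrete, so you could dispense with the Schwarz-list enumeration and the appeal to \cref{lemma:comminvariance} entirely and just compute the field generated by the traces of $g_0,g_1,g_\infty$ and their squares directly.
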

\begin{proof}

The first part follows from Katz's theorem on rigid local systems, as we now explain. By \cref{prop:field of defn}, the trace field coincides with the field of definition. In view of \cref{rem:adjointandtracefields}, it then suffices to prove that the field of definition is contained in a cyclotomic field. For brevity, let us write $\mb{H}$ for $\mb{H}(a_1, a_2; b_1,1)$. Since the $a_i$ and $b_j$ are rational numbers, the eigenvalues of the local monodromies of $\mb{H}$ around~$0,1 , \infty$ are roots of unity, and hence there exists a torsion rank-one local system $\chi$ such that $\mb{H}':=\mb{H}\otimes \chi$ has trivial determinant. Since $\mb{H}$ has traces in a cyclotomic field if and only if $\mb{H}'$ does, we are reduced to proving the statement for~$\mb{H}'$. Let $F\subset \mb{C}$ be the field generated by the eigenvalues of the local monodromies of $\mb{H}'$ around~$0, 1, \infty$, which is evidently contained in a cyclotomic field.  We claim that the field of definition of $\mb{H}'$ is contained in (and in fact equal to) $F$. Indeed, it suffices to show that $\Aut(\mb{C}/F)$ is contained in  
\[
G':=\{\sigma \in \Aut(\mb{C})\,|\,\sigma(\mb{H}')\simeq \mb{H}'\}.
\]
The key point is that $\mb{H}'$ is \emph{physically rigid} \cite[Thm.~3.5.4]{Katz1990}, \ie it is determined by its local monodromies. From this, we may now conclude: any $\sigma \in \Aut(\mb{C}/F)$ fixes the conjugacy classes of local monodromies of $\mb{H}'$, and hence $\mb{H}'$ itself.

We now prove the second part. Suppose that the adjoint trace field of $\mb{H}$ is $\mb{Q}(\sqrt{D})$ for some odd squarefree integer $D\geq 7$. Write $\lambda=\alpha_1/\alpha_2$, $\mu=\beta_1$, $\nu= \beta_1/(\alpha_1\alpha_2)$, which are roots of unity of orders $\ell, m, r$, respectively. Then  the traces of $g_{\infty}, g_0, g_{1}$ on the adjoint representation are given by 
   \[
   2+\lambda+\lambda^{-1}, \ 2+ \mu+\mu^{-1}, \ 2+ \nu +\nu^{-1},
   \]
   respectively. Now $\lambda+\lambda^{-1}$ has degree $\phi(\ell)/2$ over $\mb{Q}$, and similarly for $\mu+\mu^{-1}$ and~$\nu+\nu^{-1}$. The assumption $D \geq 7$ implies that $\mb{Q}(\sqrt{D})$ is different from $\mb{Q}(\zeta+\zeta^{-1})$ for \emph{any} root of unity $\zeta$, so $\lambda+\lambda^{-1}, \mu+\mu^{-1}, \nu+\nu^{-1}$ must all be rational. In other words, each of the integers $\ell, m, r$ is one of $1,2,3,4,6$. 

   As above, we twist $\mb{H}$ by a character to obtain a local system $\mb{H}'$ with trivial determinant. Doing so, the local monodromies at $\infty, 0, 1$ have eigenvalues
   \begin{equation}\label{eqn:roots}
   \{\alpha_1\beta_1^{1/2}\nu^{1/2}, \alpha_2\beta_1^{-1/2}\nu^{1/2}\}, \{\beta_1^{1/2}, \beta_1^{-1/2}\}, \{\nu^{1/2}, \nu^{-1/2}\}
   \end{equation}
   respectively, for some choices of square roots $\beta_1^{1/2}, \nu^{1/2}$. Note that all the roots of unity appearing in  \eqref{eqn:roots} have orders dividing $24$. Since twisting by a character has no effect on the adjoint trace field, we must have that $\mb{Q}(\sqrt{D})$ is contained in the trace field of $\mb{H}'$; by the above computation, we deduce that $\mb{Q}(\sqrt{D})$ is contained in 
   \[\mb{Q}(\zeta_{24}+\zeta_{24}^{-1}),\]
   where $\zeta_{24}$ is a primitive $24$-th root of unity. Note that $[\mb{Q}(\zeta_{24}+\zeta_{24}^{-1}): \mb{Q}(\sqrt{D})]=2$, and by transitivity of the discriminant, we have 
   \[\mathrm{Disc}(\mathbb{Q}(\sqrt{D})/\mathbb{Q})^2
   \mid \mathrm{Disc}(\mb{Q}(\zeta_{24}+\zeta_{24}^{-1})/\mathbb{Q}). \]

Finally, one can check that the discriminant of $\mb{Q}(\zeta_{24}+\zeta_{24}^{-1})$ is $2^8\cdot 3^2$. Since the discriminant of $\mathbb{Q}(\sqrt{D})$ is $D$ when $D\equiv 1\, (\mathrm{mod}\,4)$ or $4D$ when $D\equiv 2,3\, (\mathrm{mod}\,4)$, we conclude that~$D\leq6$, which is a contradiction. Therefore, the adjoint trace field of~$\mb{H}$ cannot be such a $\mb{Q}(\sqrt{D})$, as claimed. 
\end{proof}

\subsection{Differential Galois groups}
This subsection reviews some results related to the differential Galois groups of hypergeometric connections $\mathcal{H}(a;b)$, which are the Zariski closures of the monodromy groups of the corresponding local system $\mathbb{H}(a;b)$.

We call a hypergeometric connection $\mathcal{H}(a;b)$ \emph{Kummer induced} (resp. \emph{Belyi induced}, \emph{inverse Belyi induced}) if it is the direct image of a hypergeometric connection along a Kummer covering (resp. a Belyi covering of type $(a,b)$, or an inverse Belyi covering of type $(a,b)$, for a pair of positive integers $(a,b)$) of $\mathbb{G}_{m}\backslash\{1\}$ as in \cite[(3.5)]{Katz1990}). We will sometimes simply say \emph{induced} to include all these possibilities. 

\begin{thm}[{\cite[Thm.\,6.5]{beukersheckman}, also \cite[Thm.\,3.5.8]{Katz1990}}]\label{thm:beukersheckman}
    Assume that the hypergeometric connection $\mathcal{H}=\mathcal{H}(a_1, \dots, a_n ;b_1, \dots, b_{n-1}, 1)$ is irreducible (i.e. \hbox{$a_i-b_j\notin \mb{Z}$} for all $i,j$) and is neither Kummer induced nor Belyi induced nor inverse Belyi induced. Let $G$ be the differential Galois group of $\mathcal{H}$. Write $\gamma=\sum b_j -\sum a_i$. 
    \begin{enumerate}
        \item The group $G$ is reductive, 
       and  $G^\circ=G^{\circ,\mathrm{der}}$.
        \item The group $G^{\circ,\mathrm{der}}$ is either the trivial group, $\mathrm{SL}_n$, $\mathrm{SO}_{n}$, or (if $n$ is even) $\mathrm{Sp}_n$.
        \item If $\exp(2\pi \sqrt{-1} \gamma)\neq \pm 1$, then $G^{\circ,\mathrm{der}}=\{1\}$ or $\mathrm{SL}_n$.
        \item  If $\exp(2\pi \sqrt{-1} \gamma)=-1$, then $G^{\circ,\mathrm{der}}=\{1\}$ or $\mathrm{SL}_n$ or $\mathrm{SO}_n$.
        \item If $\exp(2\pi \sqrt{-1} \gamma)=1$, then $G^{\circ,\mathrm{der}}=\mathrm{SL}_n$ or (for $n$ even) $\mathrm{Sp}_n$.
    \end{enumerate}
\end{thm}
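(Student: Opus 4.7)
The plan is to deduce structural constraints on $G$ from the local monodromy data of \cref{prop:hyp local mono} and then apply a classification theorem, using crucially that $g_1$ is a pseudo-reflection and that $\mc{H}$ is not induced.

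First, I would prove (1). Since $\mc{H}$ is irreducible, the standard representation of $G$ on its solution space is simple, so $G$ is reductive. By \cref{prop:hyp local mono}, the eigenvalues of $g_0$, $g_1$, $g_\infty$ are roots of unity (as $a_i, b_j \in \mb{Q}$), so the determinant character $G \to \mb{G}_m$ has finite image; consequently $G^\circ \subseteq \SL_n$. A connected reductive subgroup of $\SL_n$ has finite center, hence trivial central torus, so $G^\circ = G^{\circ,\mathrm{der}}$.

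For (2), the crucial input is that $g_1$ is a pseudo-reflection: by \cref{prop:hyp local mono}, it has eigenvalues $(1, \ldots, 1, \zeta)$ with $\zeta := \exp(2\pi \sqrt{-1}\, \gamma)$. The assumption that $\mc{H}$ is neither Kummer, Belyi, nor inverse-Belyi induced ensures, by Katz's description of these inductions \cite[\S 3.5, \S 5.4]{Katz1990}, that the action of $G^\circ$ on the standard representation is \emph{primitive}: otherwise $G^\circ$ would preserve a non-trivial direct-sum decomposition, and the corresponding flat connection would be the direct image of a connection on a proper covering of $\mb{G}_m \setminus \{1\}$. One then invokes the classification of connected semisimple irreducible primitive subgroups of $\SL_n$ whose normalizer in $\GL_n$ contains a pseudo-reflection; the possibilities are exactly the trivial group, $\SL_n$, $\SO_n$, and (for $n$ even) $\Sp_n$. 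This classification is the principal obstacle: following \cite{beukersheckman}, it rests on combining Mitchell's theorem on finite complex reflection groups with an analysis of minimal nilpotent orbits in simple Lie algebras.

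Parts (3)--(5) are then distinguished by eigenvalue constraints on $g_1$. When $\zeta = 1$, $g_1$ is a unipotent pseudo-reflection of infinite order, so $g_1 \in G^\circ$ and $\Lie(G^\circ)$ contains a rank-one nilpotent; the minimal rank of a non-zero nilpotent on the standard representation of $\mf{so}_n$ is two for $n \geq 3$, so $\SO_n$ is excluded, and infinite order of $g_1$ eliminates the trivial case, leaving $\SL_n$ or $\Sp_n$. When $\zeta \ne \pm 1$, the multiset of eigenvalues of $g_1$ is not stable under $\lambda \mapsto \lambda^{-1}$, so $g_1$ cannot preserve any non-degenerate bilinear form up to scalar; this excludes both $\SO_n$ and $\Sp_n$, yielding (3). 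Finally, when $\zeta = -1$, the involutive reflection $g_1$ is compatible with the preservation of a symmetric form but, as a short calculation of the radical shows, incompatible with any $g_1$-invariant non-degenerate alternating form for $n \geq 4$; this rules out $\Sp_n$ while allowing $\SO_n$, giving (4).
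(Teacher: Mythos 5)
The paper does not prove \cref{thm:beukersheckman}: it is quoted verbatim as an external input, with the proof delegated entirely to \cite[Thm.\,6.5]{beukersheckman} and \cite[Thm.\,3.5.8]{Katz1990}. So there is no internal proof to compare against; your sketch must stand on its own as a reconstruction of the cited argument, and its overall architecture (reduce to the classification of irreducible primitive connected subgroups whose normalizer in $\GL_n$ contains a pseudo-reflection, then sift via the eigenvalue structure of $g_1$) does match Beukers--Heckman and Katz in spirit.

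However, your deduction of (1) has a genuine gap. You write that $G^\circ\subseteq\SL_n$ and then assert that ``a connected reductive subgroup of $\SL_n$ has finite center, hence trivial central torus.'' This is false: a maximal torus of $\SL_n$ is connected, reductive, and equal to its own positive-dimensional center. What is actually needed is that $G^\circ$, not merely $G$, acts irreducibly on the standard representation; then Schur's lemma confines $Z(G^\circ)$ to scalars, which intersected with $\SL_n$ is finite, and only then does reductivity force $G^\circ=G^{\circ,\mathrm{der}}$. The irreducibility of the $G^\circ$-action is exactly where the non-induced hypothesis enters: if $G^\circ$ acted reducibly, Clifford theory would exhibit $\mc{H}$ as induced along a proper cover, contradicting the assumption. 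So the non-induced hypothesis is used already in part (1), not only for primitivity in part (2) as your sketch suggests. A secondary, smaller point: in (4), excluding $\Sp_n$ requires you to consider the possibility that $G$ preserves the alternating form only up to a character $\chi$; the eigenvalue pairing $\lambda\mu=\chi(g_1)$ confines $\chi(g_1)$ to $\{\pm1\}$, and the case $\chi(g_1)=-1$ must be separately ruled out by a multiplicity count on the eigenvalues $\{1,\dots,1,-1\}$, which works only for $n\geq4$ (as you state, but without making the case split explicit).
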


\section{Tannakian reformulation of Siegel's problem}\label{sec:reformulation}

In this section, we introduce the Tannakian categories $\mbf{H} \subset \mbf{G}$ and argue that the existence of an object of $\mbf{G}$ that does not belong to $\mbf{H}$ implies a negative answer to Siegel's problem for $G$-functions (see \cref{cor:enoughstrict}). 

\subsection{The category $\mbf{G}$}
For a Zariski open $U\subset \mb{P}^1$, let $\Conn_{\mathrm{rs}}(U)$ denote the category of integrable connections on $U$ with regular singularities along $\mb{P}^1\setminus U$. Let $\Qzmodrs$ be the category obtained as the inductive limit of $\Conn_{\mathrm{rs}}(U)$, as~$U$ ranges over all Zariski open subsets of $U \subset \mb{P}^1$ which are defined over $\Qbar$, under the natural restriction functors.

\begin{defn}[{\cite[II, \S2.3]{andre-g-functions}}]\label{defn:G}
      We define $\mbf{G}$ as the smallest full subcategory of the abelian category $\Qzmodrs$ containing all direct summands of~$\mathrm{H}^i_{\mathrm{dR}}(X/S)$ for  $X$ a smooth proper scheme over $S=\Spec(\Qbar(z))$, as well as iterated extensions of such. 
\end{defn}

As mentioned in the introduction, $\mathbf{G}$ is a subcategory of the category of $G$\nobreakdash-opera\-tors discussed with the same notation in \cite[Def.\,2.9]{fresanjossen}. In particular, objects of $\mathbf{G}$ have a basis of solutions in the differential algebra $\mathcal{G}$ from \cref{def:functions-with-monodromy}. These two categories are conjecturally the same; see \cite[Intro.]{andre-g-functions}.

\begin{prop}
    The categories $\Qzmodrs$ and $\mbf{G}$ are Tannakian, and the Tannakian group of each object $M$ of $\Qzmodrs$ is isomorphic to the differential Galois group of $M$.
\end{prop}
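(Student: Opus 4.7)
The plan is to proceed in three stages: first, establish that $\Qzmodrs$ is a rigid symmetric monoidal $\Qbar$-linear abelian category admitting fiber functors at $\Qbar$-rational points; second, transfer these properties to $\mbf{G}$ via closure under Tannakian operations; and third, identify the Tannakian group of $\langle M\rangle^{\otimes}$ with the differential Galois group of $M$.

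For the first stage, I would equip $\Qzmodrs$ with the usual tensor product $(\mc{M}\otimes\mc{N},\nabla_{\mc{M}}\otimes 1+1\otimes\nabla_{\mc{N}})$, internal Homs, the unit object $(\mc{O}_U,d)$, and duals $\mc{M}^{\vee}=\mc{H}om_{\mc{O}_U}(\mc{M},\mc{O}_U)$; all of these preserve regular singularities along $\mb{P}^1\setminus U$ and are compatible with shrinking $U$, hence descend to the colimit. Abelianness is inherited from the ambient category of integrable connections, and rigidity is automatic. The fiber functor is then defined object by object: given an object $M$ on some open $U$, pick a $\Qbar$-rational point $x_0\in U$ and set $\omega_{x_0}(M) := M_{x_0}$. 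Restricting to the subcategory $\langle M\rangle^{\otimes}$, whose objects all live on $U$, this is an exact, faithful, $\otimes$-compatible functor to $\Qbar$-vector spaces, making $\langle M\rangle^{\otimes}$ a neutral Tannakian category over $\Qbar$.

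Next, $\mbf{G}$ is closed under direct summands and extensions by construction, so it only remains to check closure under tensor products and duals. This reduces to the analogous closure for de Rham cohomology of smooth proper morphisms: Künneth realizes $\mathrm{H}^{i}_{\mathrm{dR}}(X/S)\otimes \mathrm{H}^{j}_{\mathrm{dR}}(Y/S)$ as a summand of $\mathrm{H}^{i+j}_{\mathrm{dR}}(X\times_S Y/S)$, while Poincaré duality identifies the dual of $\mathrm{H}^{i}_{\mathrm{dR}}(X/S)$ with a twist of $\mathrm{H}^{2d-i}_{\mathrm{dR}}(X/S)$ (where $d=\dim(X/S)$), the twisting Tate object appearing inside $\mathrm{H}^{2}_{\mathrm{dR}}(\mb{P}^1_S/S)$. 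For the identification with the differential Galois group, I would invoke the Tannakian packaging of Picard--Vessiot theory: the coordinate ring of $G_M := \Aut^{\otimes}(\omega_{x_0}|_{\langle M\rangle^{\otimes}})$ is canonically the Picard--Vessiot ring of $M$ with respect to $\omega_{x_0}$, so $G_M$ is isomorphic to the differential Galois group of $M$. For regular singular $M$, one may equivalently argue via Riemann--Hilbert that both groups coincide with the Zariski closure of the monodromy image acting on $\omega_{x_0}(M)\otimes_{\Qbar}\mb{C}$.

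The main obstacle here is not really conceptual: the proposition assembles standard material from the Tannakian formalism and differential Galois theory, so the authors may well cite the literature. The chief piece of bookkeeping is the varying domain of definition of objects of $\Qzmodrs$, which is handled by choosing the basepoint inside the open on which $M$ is defined and working inside $\langle M\rangle^{\otimes}$; different choices of $x_0$ yield isomorphic Tannakian groups, up to an inner twist.
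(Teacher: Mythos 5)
Your overall architecture matches the paper's: both show $\Qzmodrs$ is Tannakian by exhibiting fiber functors and then identify the Tannakian group with the differential Galois group (you via Picard--Vessiot theory, the paper via a citation to Katz; these are the same thing). The one substantive divergence is your choice of fiber functor. You take the fiber at a $\Qbar$-rational point $x_0$ lying in the smooth locus $U$ of $M$, which depends on $M$; as you yourself note, this only furnishes a fiber functor on each $\langle M\rangle^{\otimes}$ separately, not a single functor on all of $\Qzmodrs$ (the intersection of all opens $U$ is empty, so no rational basepoint works uniformly). Strictly speaking, this leaves a gap in the claim that $\Qzmodrs$ itself is Tannakian: you would need to invoke an additional criterion (for instance Deligne's internal characterization via vanishing of exterior powers, or an argument that a filtered union of neutral Tannakian subcategories with $\End(\mathbb{1})=\Qbar$ is Tannakian) to promote the per-subcategory statement to the whole category. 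The paper sidesteps this entirely by using the nearby cycle functor $\Psi_0$ at $0$, which is defined on every $\Conn_{\mathrm{rs}}(U)$ regardless of whether $0\in U$ (because the connection is regular singular there), is compatible with the restriction maps in the inductive system, and hence descends to a genuine $\Qbar$-valued fiber functor on the colimit $\Qzmodrs$. Your per-object construction is perfectly adequate for the second half of the proposition (the identification of each $G_M$ with the differential Galois group of $M$), which is the content actually used downstream, but if you want the literal first assertion you should either switch to $\Psi_0$ or supply the extra step justifying that $\Qzmodrs$ is Tannakian as a whole. Your stage-two verification that $\mbf{G}$ is closed under tensor products and duals via K\"unneth and Poincar\'e duality is correct and is more explicit than the paper, which simply asserts that $\mbf{G}$ is a tensor subcategory.
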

\begin{proof}
    The category $\mbf{G}$ is a tensor subcategory of $\Qzmodrs$, so it suffices to show that the latter is Tannakian. To do so, we need to construct a $\Qbar$-valued fiber functor $\omega\colon \Qzmodrs \to \mathrm{Vect}_{\Qbar}$. 
    
    Recall that $\Qzmodrs$ is the inductive limit of the categories $\mathrm{Conn}_{\mathrm{rs}}(U)$ of integrable connections with regular singularities on Zariski open subsets $U$ of $\mathbb{P}^1$ defined over $\Qbar$. On each $\mathrm{Conn}_{\mathrm{rs}}(U)$, the nearby cycle functor at $0$, denoted by $\Psi_0$, is a faithfully exact tensor functor, \ie a $\Qbar$-valued fiber functor. Then, the nearby cycle functor $\Psi_0$ induces a $\Qbar$-valued fiber functor on $\Qzmodrs$. 

    For the final claim, as explained in \cite[\S1.1]{KatzDiff}, all fiber functors over $\Qbar$ are (non-canonically) isomorphic, and the corresponding Tannakian groups are the differential Galois groups.
\end{proof}

\begin{rmk}
    Any object $M$ in $\mathbf{G}$ comes from some connection $M_U$ on an open $U$, \ie $M_U$ is sent to $M$ under the natural functor from $\mathrm{Conn}_{\mathrm{rs}}(U)$ to $\Qzmodrs$. By the differential Galois group of $M$, we mean that of $M_U$. This group does not depend on the choice of $U$.
\end{rmk}

Following \cite[\S\,3.6]{Andre-Gevrey-I}, a $\mathscr{D}_{\mathbb{A}^1}$-module $\mathcal{M}$, \ie a $\Qbar[z]\langle\partial_z\rangle$-module, is said to be of type $G$ if its generic fiber $\mathcal{M}[z^{-1}]$, considered as a $\Qbar(z)\langle\partial_z\rangle$-module, lies in~$\mathbf{G}$, and similarly for $\mathscr{D}_{\mathbb{P}^1}$-modules. Conversely, every object in $\mathbf{G}$ can be seen as the generic fiber of some $\mathscr{D}_{\mathbb{A}^1}$-module of type $G$. In the rest of this paper, we will not make the distinction between $\mathscr{D}$-modules of type $G$ and objects of $\mathbf{G}$.

\subsection{The category $\mbf{H}$}

\begin{defn}\label{defn:hypercat}\label{defn:H}
    We denote by $\mbf{H}$ the Tannakian subcategory of~$\Qzmodrs$ generated by 
    \begin{itemize}
        \item objects of the form $\pi_{2*}\pi_1^*\mc{H}$, where $\pi_{1}$ and $\pi_{2}$ are finite (ramified) Galois covers fitting into a correspondence 
    \[ 
        \begin{tikzcd}[column sep=small]
            & C \arrow[dl,"\pi_1"'] \arrow[dr,"\pi_2"] & \\
            \mb{P}^1_{\Qbar} & & \mb{P}^1_{\Qbar},
        \end{tikzcd}
    \]
       where $C$ is a smooth projective curve, and $\mc{H}=j_*\mathcal{H}(a;b)$ is the direct image of a hypergeometric connection by the inclusion $j\colon \mathbb{P}^1\backslash\{0,1,\infty\} \hookrightarrow \mathbb{P}^1$;
        \item connections with finite monodromies on $\mathbb{P}^1$ minus finitely many $\Qbar$-points.
    \end{itemize}
\end{defn}

\begin{prop} The category $\mbf{H}$ is contained in $\mbf{G}$. 
\end{prop}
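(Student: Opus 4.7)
The plan is to use the fact that, being Tannakian, $\mathbf{G}$ is automatically closed under tensor products, duals, subquotients, direct sums, and extensions inside $\Qzmodrs$; it therefore suffices to show that the two kinds of generating objects of $\mathbf{H}$ from \cref{defn:hypercat} already belong to $\mathbf{G}$.

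For connections with finite monodromy, I would argue as follows. Such a connection on a Zariski open $U \subset \mathbb{P}^1_{\Qbar}$ is trivialised by a finite \'etale Galois cover $f\colon U' \to U$, so it embeds as a direct summand of $f_* \mc{O}_{U'}$. Taking a smooth projective compactification $\overline{U'}$ with the induced finite map $\overline{f}\colon \overline{U'} \to \mathbb{P}^1_{\Qbar}$, our connection is then a summand of $H^0_{\dR}$ of the generic fibre of $\overline{f}$, which is a smooth projective scheme over $\Qbar(z)$; hence it lies in $\mathbf{G}$ by \cref{defn:G}.

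For the objects of the form $\pi_{2*}\pi_1^*\mathcal{H}$, the crucial input is Katz's explicit geometric realisation in \cite[\S 5.4]{Katz1990}: every hypergeometric connection $\mathcal{H}(a;b)$ arises as a direct summand of the Gauss--Manin connection of an explicit hypergeometric pencil, i.e.\ as a direct summand of $H^i_{\dR}(X/\Spec \Qbar(z))$ for a suitable smooth proper $X$. This places $\mathcal{H}(a;b)$, and therefore also its $j_*$-extension to $\mathbb{P}^1$, in $\mathbf{G}$. It then remains to check that $\mathbf{G}$ is preserved by $\pi_1^*$ and $\pi_{2*}$ for finite morphisms of smooth projective curves: the pullback $\pi_1^*\mathcal{H}$ is realised as a direct summand of $H^i_{\dR}$ of the base change of the relevant family along $\pi_1$, while for the pushforward, if $\mathcal{N}$ on $C$ is a summand of $H^i_{\dR}(Y/\Spec \Qbar(C))$, then $\pi_{2*}\mathcal{N}$ appears as a summand of $H^i_{\dR}(Y/\Spec \Qbar(z))$ via the composition $Y \to C \to \mathbb{P}^1_{\Qbar}$, using that for a finite morphism of curves the $\mathscr{D}$-module pushforward is underived.

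The main obstacle in this plan is the geometric realisation of hypergeometric connections as Gauss--Manin summands, i.e.\ Katz's theorem, which is essentially the only non-formal ingredient. Once that is granted, everything else reduces to routine verifications: that the definition of $\mathbf{G}$ is compatible with finite pullback, finite pushforward, and the $j_*$ extension across regular singular points, and that the Tannakian subcategory of $\Qzmodrs$ generated by objects in $\mathbf{G}$ still lies in $\mathbf{G}$. Combining these pieces gives the desired inclusion $\mathbf{H} \subset \mathbf{G}$.
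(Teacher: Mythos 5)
Your proposal is correct and follows essentially the same route as the paper: the only nonformal ingredient is Katz's geometric realisation of hypergeometric connections, after which the pullback step is handled by base change and the pushforward step by viewing the family as lying over the smaller base. The paper is a bit terser — it simply declares the finite-monodromy case immediate and, for the pushforward, cites a general result of Andr\'e that direct images preserve differential modules of geometric origin rather than giving the direct Leray/composition argument you give — but the strategy and all key ideas match.
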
 

\begin{proof} It is immediate that a connection with finite monodromy is contained in $\mbf{G}$. On the other hand, by a theorem of Katz \cite[Thm.\,5.4.4]{Katz1990}, every hypergeometric connection~$\mathcal{H}$ can be realized in the cohomology of a family of algebraic varieties on 
$\mathbb{P}^1 \setminus \{0, 1, \infty\}$, and hence belongs to $\mathbf{G}$. If $\mathcal{H}$ is realized inside the cohomology of the generic fiber of $X \to \mathbb{P}^1$, then $\pi_1^\ast \mathcal{H}$ is realized inside the cohomology of the generic fiber of a resolution of singularities of the product $X \times_{\mathbb{P}^1} C$. Finally, it is proved in \cite[II\,\S2.3, Prop.\,2]{andre-g-functions} that direct images preserve differential modules of geometric origin, so $\pi_{2\ast}\pi_1^\ast \mathcal{H}$ belongs to $\mbf{G}$.  
\end{proof} 

In the following, we often omit the subscript $\Qbar$ when it is clear that we are considering maps and objects defined over $\Qbar$.

\begin{lem}\label{rmk:reduced}
   In \cref{defn:H},  we can assume that $\mathcal{H}$ is not of the form~$\varphi_*\mathcal{M}$ for a ramified cover $\varphi \colon X\to \mathbb{P}^1$ and a $\mathscr{D}_X$-module $\mathcal{M}$. 
\end{lem}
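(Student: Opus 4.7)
The plan is to argue by induction on the rank of $\mathcal{H}$: if $\mathcal{H} = \varphi_*\mathcal{M}$ for a nontrivial ramified cover $\varphi\colon X \to \mathbb{P}^1$, I want to rewrite each generator $\pi_{2*}\pi_1^*\mathcal{H}$ of $\mathbf{H}$ as a generator of the same shape, but with $\mathcal{H}$ replaced by a hypergeometric connection of strictly smaller rank. After decomposing $\mathcal{H}$ into its irreducible hypergeometric summands, we may assume $\mathcal{H}$ is irreducible; then Katz's classification (summarized in the discussion preceding \cref{thm:beukersheckman}) ensures that such a pushforward presentation comes from Kummer, Belyi, or inverse\nobreakdash-Belyi induction, so that $X$ may be taken to be $\mathbb{P}^1$ and $\mathcal{M}$ to be a hypergeometric connection on $\mathbb{P}^1 \setminus \{0,1,\infty\}$ of lower rank.

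The key computation is proper base change for holonomic $\mathscr{D}$-modules with regular singularities, applied to the fiber diagram
\[
\begin{tikzcd}
\widetilde{C} \arrow[r, "\widetilde{\pi}_1"] \arrow[d, "\widetilde{\varphi}"'] & \mathbb{P}^1 \arrow[d, "\varphi"] \\
C \arrow[r, "\pi_1"'] & \mathbb{P}^1,
\end{tikzcd}
\]
where $\widetilde{C}$ is the normalization of $C \times_{\mathbb{P}^1} \mathbb{P}^1$. Base change yields a canonical identification $\pi_1^*\varphi_*\mathcal{M} \simeq \widetilde{\varphi}_*\widetilde{\pi}_1^*\mathcal{M}$, whence
\[
\pi_{2*}\pi_1^*\mathcal{H} \simeq (\pi_2\circ\widetilde{\varphi})_*\widetilde{\pi}_1^*\mathcal{M}.
\]
To bring this into the exact form required by \cref{defn:H}, I would then replace the pair of maps $(\widetilde{\pi}_1, \pi_2\circ\widetilde{\varphi})\colon \widetilde{C} \rightrightarrows \mathbb{P}^1$ by a common Galois closure from a smooth projective curve, which keeps us inside the Tannakian subcategory because the additional pushforward and pullback steps only produce direct summands of objects that are already on our list of generators.

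Iterating this reduction strictly decreases the rank at each step and so must terminate with a hypergeometric connection that is not a nontrivial pushforward, proving the lemma. The main obstacle I anticipate is the careful verification of base change in the ramified, regular-singular setting of $\mathscr{D}$-modules as defined in \cite[\S5.1]{Katz1990}: since $\varphi$ and $\pi_1$ are ramified, one must check that normalization of the fiber product yields a smooth curve on which the identification above holds, and that this interacts correctly with the convention $\mathcal{H} = j_*\mathcal{H}(a;b)$ extending connections by direct image across the singular set. A secondary technical point is the standard Galois closure construction for a pair of finite covers of $\mathbb{P}^1$, which ensures that the rewritten generator fits the prescribed form.
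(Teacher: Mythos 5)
Your proof is correct and follows essentially the same route as the paper's: invoke Katz's Prop.~3.5.2 to force $X=\mathbb{P}^1$ and $\mathcal{M}$ hypergeometric of strictly smaller rank, then base change across the fiber product (normalized) to re-express the generator $\pi_{2*}\pi_1^*\mathcal{H}$ in terms of $\mathcal{M}$. The only real differences are presentational: you package the termination as an explicit induction on rank (the paper leaves it implicit), and you spell out the Galois-closure step needed to bring the new correspondence into the exact form of \cref{defn:H} (the paper elides this, and also contains a $\pi_1\leftrightarrow\pi_2$ notational slip in the base-change computation that your version avoids). One small caveat: since hypergeometric connections need not be semisimple, "decomposing $\mathcal{H}$ into its irreducible hypergeometric summands" should really be "passing to the semi-simplification," as the paper does in the subsequent \cref{lem:irreducibleobjects}; but as used, $\mathcal{H}$ is taken irreducible, so this is harmless.
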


\begin{proof}
    Assume that $\mathcal{H}$ is of the form $\varphi_*\mathcal{M}$ for a ramified cover $\varphi \colon X \to \mathbb{P}^1$. Arguing as at the beginning of the proof of \cite[Prop.\,3.5.2]{Katz1990}, we first see that $\varphi$ is at most ramified at $0,1$, $\infty$. The statement of \cite[Prop.\,3.5.2]{Katz1990} then gives that $\mathcal{M}$ is also a hypergeometric $\mathscr{D}$-module, $X=\mathbb{P}^1$, and $\varphi$ is either a Kummer cover of~$\mathbb{G}_m$ or a Belyi or inverse Belyi map. Let $C_1, \ldots, C_k$ be the irreducible components of the normalization of the fiber product $C'=C\times_{\pi_2,\mathbb{P}^1,\varphi} \mathbb{P}^1$, and consider the compositions $\varphi_i\colon C_i\hookrightarrow C\xrightarrow{\pi_1}\mathbb{P}^1$ and $\psi_i\colon C_i \hookrightarrow C'\xrightarrow{\pi_i} \mathbb{P}^1$ as in the following diagram:
        \[ 
        \begin{tikzcd}[column sep=small]
           C_i\arrow[d,"\varphi_i"] \arrow[r, hook] \arrow[rr, bend left,"\psi_i"] & C' \ar[d]  \ar[r] & \mathbb{P}^1\ar[d,"\varphi"] \\
            \mb{P}^1 & C\arrow[l,"\pi_1"'] \arrow[r,"\pi_2"]& \mb{P}^1.
        \end{tikzcd}
    \]
    By base change, we conclude that 
    \[
        \pi_{1*}\pi_2^*\mathcal{H}=\bigoplus_{i=1}^k \varphi_{i*}\psi_i^*\mathcal{M}.
    \]
    Therefore, the differential module  
    $\pi_{1*}\pi_2^*\mathcal{H}$ is generated by those constructed from~$\mathcal{M}$ using correspondences. 
\end{proof}

\begin{lem}\label{lem:irreducibleobjects}
    Every semisimple object of $\mbf{H}$ is contained in the Tannakian subcategory generated by connections with finite monodromy and objects $\pi_{2*}\pi_1^*\mc{H}$, where~$\mc{H}$ is an irreducible hypergeometric connection which is not induced. 
\end{lem}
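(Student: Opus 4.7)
My plan is to perform a double induction on the rank of the hypergeometric $\mc{H}$, reducing first to irreducible $\mc{H}$ and then to non-induced $\mc{H}$. Let $\mbf{H}'$ denote the Tannakian subcategory in the statement. Since a Tannakian subcategory is closed under subquotients, tensor products, and duals, and every semisimple object $M$ of $\mbf{H}$ has its simple summands realised as Jordan--Hölder factors of some tensor product of generators of $\mbf{H}$, it suffices to show that $N^{\mathrm{ss}} \in \mbf{H}'$ for each generator $N$. For finite monodromy generators this is tautological, and the dual of a hypergeometric is again hypergeometric, so the problem reduces to proving $(\pi_{2*}\pi_1^*\mc{H})^{\mathrm{ss}} \in \mbf{H}'$ for every hypergeometric $\mc{H}$.

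For the first reduction, I plan to use that $\pi_{2*}\pi_1^*$ is exact on regular-singular $\mathscr{D}$-modules (both $\pi_1$ and $\pi_2$ being finite morphisms of smooth curves). Any composition series of $\mc{H}$ then yields a composition series of $\pi_{2*}\pi_1^*\mc{H}$ with graded pieces $\pi_{2*}\pi_1^*\mc{H}_\alpha$ for the Jordan--Hölder factors $\mc{H}_\alpha$ of $\mc{H}$, so that $(\pi_{2*}\pi_1^*\mc{H})^{\mathrm{ss}}$ is a direct summand of $\bigoplus_\alpha \pi_{2*}\pi_1^*\mc{H}_\alpha$. It then remains to describe the Jordan--Hölder factors of a reducible hypergeometric: invoking the cancellation procedure underlying \cite[Cor.~3.2.1]{Katz1990}, whenever $a_i \equiv b_j \pmod{\mb{Z}}$ one obtains a short exact sequence displaying $\mc{H}(a;b)$ as an extension of hypergeometrics of strictly smaller rank. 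Iterating, each Jordan--Hölder constituent is either an irreducible hypergeometric of strictly smaller rank, or a rank-one Kummer piece $\mc{H}(a;1)$ with $a \in \mb{Q}$ (hence of finite monodromy, and thus already in $\mbf{H}'$).

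For the second reduction, assume $\mc{H}$ is irreducible but induced, say $\mc{H} = \varphi_*\mc{M}$. Then by the analysis in the proof of \cref{rmk:reduced} (which in turn relies on \cite[Prop.~3.5.2]{Katz1990}), $\varphi$ is a Kummer, Belyi, or inverse Belyi cover, and $\mc{M}$ is itself an irreducible hypergeometric with $\mathrm{rk}\,\mc{M} = \mathrm{rk}\,\mc{H}/\deg\varphi < \mathrm{rk}\,\mc{H}$. The computation in the proof of \cref{rmk:reduced} then rewrites $\pi_{2*}\pi_1^*\mc{H}$ as a direct sum of objects of the same shape built from $\mc{M}$ via modified correspondences. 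Since the rank of the underlying hypergeometric strictly decreases at each iteration, the process terminates at an irreducible non-induced hypergeometric, which is a generator of $\mbf{H}'$.

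The main technical hurdle is the first reduction: one must verify that the Jordan--Hölder constituents of a reducible hypergeometric indeed split into the two classes above. While Katz's cancellation procedure is classical, handling the combinatorics when several parameter congruences hold simultaneously, and checking that the resulting rank-one subquotients really have rational exponents and hence finite monodromy, will require some care. Everything else in the argument is bookkeeping at the level of Tannakian categories and exact functors.
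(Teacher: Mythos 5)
Your proposal is correct and follows essentially the same route as the paper: reduce to simple subquotients, use that $\pi_{2*}\pi_1^*$ commutes with semisimplification because $\pi_1^*$ and $\pi_{2*}$ are restriction/extension of scalars along the finite separable extension $K(C)/\Qbar(t)$, describe the semisimplification of a reducible hypergeometric, and then dispose of induced ones via \cref{rmk:reduced}. The one point worth flagging is that the ``main technical hurdle'' you identify at the end is not actually a hurdle: instead of redoing the cancellation combinatorics behind \cite[Cor.~3.2.1]{Katz1990}, you should cite \cite[Cor.~3.7.5.2]{Katz1990}, which states directly that $\mc{H}(a;b)^{\mathrm{ss}}\simeq \mc{H}(a_0;b_0)\oplus\bigoplus_i K(c_i)$ with $\mc{H}(a_0;b_0)$ irreducible hypergeometric and the $K(c_i)$ Kummer connections with rational exponents; this is exactly the package you need and makes the rank induction unnecessary for the first reduction. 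A smaller slip: exactness gives that $\pi_{2*}\pi_1^*\mc{H}$ has a \emph{filtration} with graded pieces $\pi_{2*}\pi_1^*\mc{H}_\alpha$ (these need not be simple), so the precise conclusion is that the Jordan--H\"older factors of $\pi_{2*}\pi_1^*\mc{H}$ occur among those of $\bigoplus_\alpha \pi_{2*}\pi_1^*\mc{H}_\alpha$, rather than $(\pi_{2*}\pi_1^*\mc{H})^{\mathrm{ss}}$ being a direct summand of that sum --- this is what you use anyway, so the argument is fine once rephrased.
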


\begin{proof} 
    The proof is similar to that of \cite[Prop.\,3.10]{fresanjossen}. It suffices to show that any simple object $M$ of $\mathbf{H}$ is contained in the Tannakian subcategory generated by connections with finite monodromy and objects $\pi_{2*}\pi_1^*\mc{H}$, where~$\mc{H}$ is an irreducible hypergeometric connection.

    By definition of $\mathbf{H}$, the object $M$ is a subquotient of a tensor construction from hypergeometric connections and connections with finite monodromies, \ie 
        $$N\otimes \big[\pi_{2,1*}\pi_{1,1}^*\mc{H}(a_1;b_1)\big]^{\otimes m_1}\otimes \dots \big[\pi_{2,k*}\pi_{1,k}^*\mc{H}(a_k;b_k)\big]^{\otimes m_k}$$
    for some connection $N$ with finite monodromy, some integers $m_i$, some morphisms~$\pi_{2,i}$ and $\pi_{1,i}$ fitting into a  correspondence, and some hypergeometric connections $\mc{H}(a_i;b_i)$ (now $a_i$ et $b_i$ denote vectors in $\mb{Q}^{n_i}$ for some $n_i\geq 1$).  For ease of notation, we assume that $M$ is a subquotient of an object $\pi_{2*}\pi_1^*\mc{H}(a;b)$, leaving the general argument to the reader. By \cite[Cor. 3.7.5.2]{Katz1990}, the semi-simplification of~$\mc{H}(a;b)$ is given by 
    \[
        \mc{H}(a;b)^{\mathrm{ss}}\simeq \mc{H}(a_0;b_0)\oplus \bigoplus_{i=1}^r K(c_i)
    \] for some irreducible hypergeometric connection $\mc{H}(a_0;b_0)$ and some Kummer connections $K(c_i)=(\mathcal{O}_{\mathbb{G}_m},\mathrm{d}+c_i\mathrm{d}z/z)$ with rational $c_i\in \mb{Q}$.
    
    Recall that our objects are connections restricted to generic points. For the finite morphisms $\pi_i\colon C\to \mathbb{P}^1$ for $i=1,2$, the functor $\pi_1^*$ is given by base change from~$\Qbar(t)$ to $K(C)$, and the effect of the functor $\pi_{2*}$ is viewing a $K(C)\langle \partial\rangle$-module as a $\Qbar(t)\langle \partial_t\rangle$-module. So, these two functors commute with semi-simplifications and 
    \[
        (\pi_{2*}\pi_1^*\mc{H}(a;b))^{\mathrm{ss}}\simeq \pi_{2*}\pi_1^*\mc{H}(a_0;b_0)\oplus \bigoplus_{i=1}^r \pi_{2*}\pi_1^*K(c_i).
    \]
    Hence, $M$ is either a subquotient of some $\pi_{2*}\pi_1^*K(c_i)$, which are connections with finite monodromies, or a subquotient of $\pi_{2*}\pi_1^*\mc{H}(a_0;b_0)$, where now $\mc{H}(a_0, b_0)$ is irreducible. Similar to \cref{rmk:reduced}, we can assume that $\mc{H}(a_0;b_0)$ is not induced, and this completes the proof. 
\end{proof}

\begin{prop}\label{prop:solutions-H}
Every $G$-function appearing in the formulation of \cref{siegelproblem}, \ie any polynomial expression in 
\[
\mu(z)\cdot \,_{p+1}F_{p}[a_1, \dots , a_{p+1}; b_1, \dots , b_p\,|\,\lambda(z)]
\]
with coefficients in $\Qbar$, is the solution of an object in $\mbf{H}.$ 
\end{prop}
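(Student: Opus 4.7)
The plan is to reduce the statement to handling two building blocks and then combine them via the Tannakian structure of $\mbf{H}$.

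First, recall that $\mbf{H}$ is a Tannakian subcategory, hence closed under direct sums and tensor products; if $f_i$ is a solution of $M_i\in \mbf{H}$ (for $i=1,2$), the sum $f_1+f_2$ is a solution of $M_1\oplus M_2$ and the product $f_1f_2$ is a solution of $M_1\otimes M_2$, both of which lie in $\mbf{H}$ (here we use that $\mc{M}$ from \cref{def:functions-with-monodromy} is a differential algebra, so that pointwise multiplication of solutions corresponds to the tensor product of $\mathscr{D}$-modules). It therefore suffices to prove the statement for a single factor $\mu(z)\cdot F(\lambda(z))$, and in turn for $\mu(z)$ and $F(\lambda(z))$ separately.

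An algebraic function $\mu(z)$ satisfies the rank-one equation $\partial_z u = (\mu'/\mu)u$, whose monodromy is finite; such a connection is an object of $\mbf{H}$ by the second bullet of \cref{defn:H}. For the hypergeometric pullback, the plan is to realize the algebraic substitution via a correspondence as in the first bullet of \cref{defn:H}. Starting from a desingularization of the graph of $\lambda$ in $\mb{P}^1_z\times \mb{P}^1_\lambda$, one obtains a smooth projective curve $X$ with finite morphisms $p_1\colon X\to \mb{P}^1_\lambda$ and $p_2\colon X\to \mb{P}^1_z$, together with a local section $\sigma_0$ of $p_2$ near $z=0$ satisfying $p_1\circ \sigma_0=\lambda$ on a small neighborhood of $0$ (the section exists because $\lambda$ is holomorphic at $0$). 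By iterated Galois closures---first replacing $X$ by the Galois closure of $p_2$ over $\mb{P}^1_z$, and then replacing the result by the Galois closure of the resulting map to $\mb{P}^1_\lambda$---one produces a smooth projective curve $C$ with finite Galois covers $\pi_1\colon C\to \mb{P}^1_\lambda$ and $\pi_2\colon C\to \mb{P}^1_z$, and the section $\sigma_0$ lifts to a local section $\sigma$ of $\pi_2$ still satisfying $\pi_1\circ \sigma = \lambda$ near $0$.

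Setting $\mc{H}=j_*\mc{H}(\vec a;\vec b,1)$, the hypergeometric function $F$ is a local solution of $\mc{H}$ at $\lambda(0)=0$, so $F\circ \pi_1$ is a local solution of $\pi_1^*\mc{H}$ on $C$. Over the étale locus of $\pi_2$, the pushforward $\pi_{2*}\pi_1^*\mc{H}$ decomposes analytically as a direct sum of copies of $\pi_1^*\mc{H}$ indexed by the sheets of $\pi_2$; composing $F\circ \pi_1$ with the sheet picked out by $\sigma$ then gives $F\circ \pi_1\circ \sigma = F(\lambda(z))$ as a local solution of $\pi_{2*}\pi_1^*\mc{H}$, which lies in $\mbf{H}$ by definition. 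The main technical obstacle is precisely the simultaneous Galois arrangement of $\pi_1$ and $\pi_2$ together with the preservation of the local section; this is handled by the iterated Galois closure above, which only enlarges the resulting object of $\mbf{H}$ and leaves $F(\lambda(z))$ in its solution space.
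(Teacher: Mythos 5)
Your proof takes essentially the same approach as the paper: reduce to a single factor $F(\lambda(z))$ via the Tannakian structure of $\mbf{H}$ (handling $\mu(z)$ as a rank-one connection with finite monodromy), realize the algebraic substitution through the graph correspondence, use a local section of $\pi_2$ coming from the holomorphicity of $\lambda$ at $0$, and then pass to a Galois-enhanced correspondence. The one place to be careful is the ``iterated Galois closures'' step: after taking the Galois closure of the map to $\mb{P}^1_\lambda$, the resulting composition to $\mb{P}^1_z$ is no longer obviously Galois (the $\Gal(\,\cdot\,/\Qbar(\lambda))$-conjugates of the first closure need not be stable under $\Gal(\,\cdot\,/\Qbar(z))$), so two iterations alone do not clearly suffice; the paper instead simply asserts the existence of a cover $\tilde C\to C$ making both compositions Galois and concludes via the observation, which you also make, that $\pi_{2*}\pi_1^*\nabla$ sits inside the resulting larger pushforward, so $F(\lambda(z))$ remains a solution.
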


\begin{proof}
Using the fact that if $f_1, f_2 \in \mathcal{G}$ are solutions of differential modules $\mc{E}_1, \mc{E}_2$, then~$f_1+f_2$ and $f_1f_2$ are solutions of $\mc{E}_1\oplus \mc{E}_2$ and $\mc{E}_1\otimes \mc{E}_2$, it suffices to show that 
\begin{equation}\label{eq:basiccase}
    F(z)=\,_{p+1}F_{p}[a_1, \dots , a_{p+1}; b_1, \dots , b_p\,|\,\lambda(z)] 
\end{equation} is a solution of an object in $\mathbf{H}$, where $a_i \in \mathbb{Q}$, $b_j \in \mathbb{Q} \setminus \mathbb{Z}$, and $\lambda(z)$ is algebraic over~$\Qbar(z)$ and holomorphic at $0$.

Since $\lambda$ is algebraic, there exists a monic irreducible polynomial $P(t,z) \in \Qbar(z)[t]$ of minimal degree satisfying $P(\lambda(z),z)=0$. Let $X \subset \mathbb{A}^2_{t,z}$ be the zero locus of~$P(t,z)$, and denote by $p_1$ and $p_2$ the projections from $X$ to $\mathbb{A}^1_t$ and $\mathbb{A}^1_z$, respectively.

Let $C$ be the normalization of the Zariski closure of $X$ in $\mathbb{P}^2_{[T:Z:S]}$. The morphisms~$p_1$ and $p_2$ also give rise to $\pi_1$ and $\pi_2$ as follows:
\[ 
    \begin{tikzcd}[column sep=small]
        & C \arrow[dl, swap, "\pi_1"] \arrow[dr, "\pi_2"] & \\
        \mb{P}^1_{T} & & \mb{P}^1_{Z}.
    \end{tikzcd}
\]

\begin{claim}\label{claim:open-nhd}
There exist an analytic open neighborhood $U$ of $0\in \mb{P}_Z^{1,an}$ and an analytic open subset $W\subset C^{an}$ such that 
\begin{enumerate}
    \item $\pi_2|_W$ maps $W$ biholomorphically to $U$, and 
    \item $\pi_1(W)$ contains $\lambda(0)\in \mb{P}_{T}^{1,an}$.
\end{enumerate}
\end{claim}
\begin{proof}[Proof of \upshape\cref{claim:open-nhd}]
    Since $\lambda(z)$ is holomorphic at $0$, there exists a preimage $q\in C^{an}$ of $(\lambda(0), 0) \in X^{an}$, an analytic open neighborhood $W$ of $q$, and a local coordinate $z$ at $q$, such that the local parametrization on $W$ is given by $(\lambda(z), z)$ for $z$ in some open neighborhood $U$ of $0$. 
\end{proof}

Let $\nabla$ be a hypergeometric connection annihilating $\,_{p+1}F_{p}[a, b ; z]$. By the above claim, the function $F(z)$ is a solution of $(\pi_2|_W)_*(\pi_1|_W)^*\nabla|_{\pi_1(W)}$.

At last, there exists a Galois cover $\tilde\pi\colon \tilde C\to C$ such that $\pi_1\circ \tilde\pi$ and $\pi_2\circ \tilde\pi$  are both finite Galois covers. Since $(\pi_2)_*(\pi_1)^*\nabla$ is a subconnection of $(\pi_2\circ \tilde\pi)_*(\pi_1\circ \tilde\pi)^*\nabla$, the function \eqref{eq:basiccase} is also locally a solution of $(\pi_2\circ \tilde\pi)_*(\pi_1\circ \tilde\pi)^*\nabla$.
\end{proof}

Similar to \cite[Lem.\,5.6]{fresanjossen}, we conclude from \cref{prop:solutions-H} the following:

\begin{cor}\label{cor:enoughstrict}
    Suppose $M$ is an irreducible object of $\mbf{G}$ which is not in $\mbf{H}$, and moreover that $M$ is smooth at $0$. Then there is a $G$-function which is a solution of~$M$ and which cannot be written in the form specified by \cref{siegelproblem}.
\end{cor}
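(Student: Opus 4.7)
The plan is to prove the contrapositive: if every $G$-function solution of $M$ at $0$ can be written as a $\Qbar$-polynomial expression in functions of the form $\mu(z)\cdot{}_{p+1}F_p[a_1,\ldots,a_{p+1};b_1,\ldots,b_p\,|\,\lambda(z)]$ as in \cref{siegelproblem}, then $M\in\mbf{H}$, contradicting the hypothesis.

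First I would exhibit a nonzero $G$-function solution of $M$. Since $M$ is smooth at $0$ and belongs to $\mbf{G}$, its formal solution space at $0$ has $\Qbar$-dimension $n \defeq \mathrm{rk}(M)\geq 1$ and consists of elements of the differential algebra $\mc{G}$ from \cref{def:functions-with-monodromy} that are holomorphic at $0$; smoothness at $0$ rules out fractional powers and logarithms, so every such solution is in fact an honest $G$-function. Pick any nonzero solution $F$.

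Next I would recover $M$ from $F$ using irreducibility. By the cyclic vector theorem, choose a presentation $M \simeq \Qbar(z)\langle\partial_z\rangle/\Qbar(z)\langle\partial_z\rangle L_M$ with $L_M$ monic of order $n$, so that a function is a solution of $M$ at $0$ precisely when it is killed by $L_M$. Let $L_F$ denote the minimal monic operator with $L_F(F)=0$. Since $L_M(F)=0$, the operator $L_F$ right-divides $L_M$, yielding a surjection $M\twoheadrightarrow M_F \defeq \Qbar(z)\langle\partial_z\rangle/\Qbar(z)\langle\partial_z\rangle L_F$. By irreducibility of $M$ and $F\neq 0$, this surjection is an isomorphism, so $M\simeq M_F$.

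It remains to place $M_F$ in $\mbf{H}$. By the contrapositive hypothesis and \cref{prop:solutions-H}, there exists $N_F\in\mbf{H}$ having $F$ as a solution, that is, a $\Qbar(z)\langle\partial_z\rangle$-linear map $\phi\colon N_F\to\mc{G}$ and an element $v\in N_F$ with $\phi(v)=F$. The cyclic submodule $\Qbar(z)\langle\partial_z\rangle\cdot v\subseteq N_F$ is isomorphic to $\Qbar(z)\langle\partial_z\rangle/\mathrm{Ann}(v)$; since $\phi$ sends $\mathrm{Ann}(v)$ into the left ideal annihilating $F$, which is generated by $L_F$, there is a surjection $\Qbar(z)\langle\partial_z\rangle\cdot v\twoheadrightarrow M_F$. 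Thus $M_F$ is a subquotient of $N_F\in\mbf{H}$, and since Tannakian subcategories are closed under subquotients, $M_F\in\mbf{H}$. Combining the two steps, $M\simeq M_F\in\mbf{H}$, the desired contradiction. The only mildly technical step is the identification of $M_F$ with a subquotient of $N_F$; once the cyclic presentations are correctly aligned via the element $v$ mapping to $F$, the divisibility argument at the heart of the proof is elementary.
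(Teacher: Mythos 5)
Your proposal is correct and follows essentially the same approach as the paper's proof: both invoke \cref{prop:solutions-H} to produce an object of $\mbf{H}$ with the given solution, then use the annihilator comparison together with irreducibility of $M$ to exhibit $M$ as a subquotient of that object. The only cosmetic differences are that you argue via the contrapositive and introduce the intermediate cyclic module $M_F$ explicitly, whereas the paper constructs the surjection $M_0 \twoheadrightarrow M$ directly by sending $m_0 \mapsto m$; the underlying mechanism (injectivity of the solution map from irreducibility, inclusion of annihilators, closure of $\mbf{H}$ under subquotients) is identical.
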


\begin{proof}
    Recall that $M$ admits a basis of solutions in the differential algebra $\mathcal{G}$, and denote by $\mathcal{H}yp \subset \mc{G}$ the subalgebra consisting of the functions in the formulation of \cref{siegelproblem}. For the sake of contradiction, suppose that $s(m)$ lies in $\mathcal{H}yp$  for some element~$m\in M$ and some solution $s\colon M \to \mathcal{G}$. Then \cref{prop:solutions-H} implies that there exist an object $M_0$ of $\mathbf{H}$, an element $m_0 \in M_0$, and a solution $s_0 \colon M_0 \to \mathcal{H}yp$ with $s(m)=s_0(m_0)$. We may assume that $M_0$ is generated by $m_0$.

Since $M$ is irreducible, the solution $s$ is injective. Thus, $m$ and $s(m)$ have the same annihilator ideal in $\Qbar[z]\langle\partial_z\rangle$, and this ideal contains the annihilator ideal of~$m_0\in M_0$. There is therefore a unique morphism of $\Qbar[z]\langle\partial_z\rangle$-modules $M_0\to M$ sending $m_0$ to $m$. As $m$ is non-zero and $M$ is irreducible, this morphism is surjective. Being a quotient of $M_0$, the object $M$ lies in $\mathbf{H}$, which leads to a contradiction.

Finally, $s(m)$ is a $G$-function, since it belongs to $\mathcal{G}$ and is a solution of the module~$M$ around a smooth point. 
\end{proof}

\section{Proofs of main results}\label{sec:proof}

The goal of this section is to prove \cref{thm:gfunction}, and to do so it suffices, by \cref{cor:enoughstrict}, to construct an object of $\mbf{G}$ which is not in $\mbf{H}$. To this end, we first derive a key obstruction for a rank-two object of $\mathbf{G}$ to belong to $\mathbf{H}$ in Lemma \ref{lem:fuchsiancriterion}. 

\subsection{A consequence of Goursat's lemma}

Let $\mbf{T}$ be a Tannakian category along with some choice of fiber functor $\omega$. For an object $A$ of $\mbf{T}$,  we denote by $G_A$ the Tannakian group of $A$, so that the functor $\omega$ induces a tensor equivalence 
\[\langle A\rangle^{\otimes} \cong \Rep(G_A).\]
The Lie algebra of the algebraic group $G_A$ will be denoted by $\mf{g}_A$; we may also view~$\mf{g}_A$ as the adjoint representation of $G_A$, and going through the above equivalence we obtain an object of $\langle A\rangle ^{\otimes}$, denoted by $\underline{\mf{g}}_A.$

\begin{defn}
For objects $A$ and $B$ of a Tannakian category $\mbf{T}$, let $\mf{g}_B$ and~$\mf{g}_{A\oplus B}$    denote the Lie algebras of the Galois groups of $B$ and $A\oplus B$, respectively. We say that~$A$ is \emph{Lie-generated} by $B$ if the natural morphism of Lie algebras 
\[
\mf{g}_{A\oplus B}\longrightarrow \mf{g}_{B}
\] is an isomorphism. If $A$ is Lie-generated by $B$, then there is a surjection $\mathfrak{g}_{B}\twoheadrightarrow \mathfrak{g}_{A}$.
\end{defn}

We will make use of the following version of Goursat's lemma for Lie algebras:

\begin{lem}[{\cite[Lem.~4.5]{fresanjossen}}]\label{lemma:singlegeneration}
    Let $\mbf{T}$ be a Tannakian category generated by a family of objects $\mc{B}$. Let $A$ be an object of $\mbf{T}$ such that $\mf{g}_A$  is simple and non\nobreakdash-commutative. Then $A$ is Lie-generated by a single object of the family $\mc{B}$. 
\end{lem}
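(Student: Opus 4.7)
The plan is to combine the Tannakian version of Goursat's lemma with the classical structure theorem for subalgebras of products of simple Lie algebras, exploiting the simplicity of $\mf{g}_A$ to pass from a finite tuple of generators to a single one. First, since $\mbf{T}$ is Tannakian generated by $\mc{B}$, the object $A$ is a subquotient of a tensor construction on finitely many objects $B_1,\ldots,B_n \in \mc{B}$; setting $N := B_1 \oplus \cdots \oplus B_n$, we therefore have $A \in \langle N\rangle^{\otimes}$, which on Tannakian groups yields a surjection $q\colon \mf{g}_N \twoheadrightarrow \mf{g}_A$.

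Second, I would apply the Tannakian Goursat inclusion: the map $G_N \to \prod_{i=1}^n G_{B_i}$ is a closed immersion, since a closed normal subgroup acting trivially on each $B_i$ must act trivially on $N$. On Lie algebras this is an embedding $\mf{g}_N \hookrightarrow \bigoplus_{i=1}^n \mf{g}_{B_i}$ whose projections $p_i\colon \mf{g}_N \twoheadrightarrow \mf{g}_{B_i}$ are all surjective. Since $\mf{g}_A$ is simple non-abelian, hence semisimple and centerless, the surjection $q$ factors through the semisimple quotient $\mf{g}_N^{\mathrm{ss}}$, giving $\bar q\colon \mf{g}_N^{\mathrm{ss}}\twoheadrightarrow \mf{g}_A$, and similarly $p_i$ induces $\bar p_i\colon \mf{g}_N^{\mathrm{ss}} \twoheadrightarrow \mf{g}_{B_i}^{\mathrm{ss}}$.

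Third, the classical Goursat-type analysis of Lie subalgebras of a product of semisimple Lie algebras describes the inclusion $\mf{g}_N^{\mathrm{ss}} \hookrightarrow \bigoplus_i \mf{g}_{B_i}^{\mathrm{ss}}$ as a product of diagonal embeddings: each simple ideal of $\mf{g}_N^{\mathrm{ss}}$ is identified with a common simple ideal shared across a subset of the $\mf{g}_{B_i}^{\mathrm{ss}}$. Since $\ker(\bar q)$ is a maximal ideal of $\mf{g}_N^{\mathrm{ss}}$, the surjection $\bar q$ must factor through projection onto a single such simple factor, which in turn appears as a quotient of $\mf{g}_{B_i}^{\mathrm{ss}}$ for some index $i$. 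Composing everything, one obtains a surjection $s\colon \mf{g}_{B_i} \twoheadrightarrow \mf{g}_A$ with $s \circ p_i = q$. Reapplying the Goursat inclusion to $A \oplus B_i$, the subalgebra $\mf{g}_{A \oplus B_i} \hookrightarrow \mf{g}_A \oplus \mf{g}_{B_i}$ is forced to coincide with the graph of $s$, so its projection to $\mf{g}_{B_i}$ is an isomorphism, exactly the statement that $A$ is Lie-generated by $B_i$.

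The main obstacle I expect is the Goursat-type structural result for semisimple Lie subalgebras of a product of semisimple Lie algebras: it needs to be invoked carefully enough to identify the simple factor of $\mf{g}_N^{\mathrm{ss}}$ accounting for $\mf{g}_A$ with a genuine simple summand of one of the $\mf{g}_{B_i}^{\mathrm{ss}}$ rather than merely an abstractly isomorphic copy, which is what legitimates the existence of $s$. The simplicity and non-commutativity of $\mf{g}_A$ are what rules out abelian and mixed-factor obstructions; the rest of the argument—reduction to finitely many generators, Tannakian Goursat, graph-recognition—is essentially formal.
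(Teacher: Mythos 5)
Your proof is correct and reconstructs the argument that the paper cites from Fresán--Jossen without reproducing; the category in which the paper lives simply invokes \cite[Lem.~4.5]{fresanjossen} by name, so there is no in-paper proof to compare against, but your route via the Tannakian Goursat embedding and the partition-into-diagonals structure of semisimple subalgebras of products is precisely the ``Lie algebra version of Goursat's lemma'' alluded to in the paper's title for this step, and your endgame (identifying $\mf{g}_{A\oplus B_{i_0}}$ with the image of $(q,p_{i_0})$ and then with the graph of $s$) is clean and correct.

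Two small points worth tightening if you write this up. First, the equality $s\circ p_{i_0}=q$ is not a mere composition of surjections: it uses the disjointness of the diagonal blocks. Concretely, you need that for $\alpha\neq\alpha_0$ the image $\bar p_{i_0}(\mf{s}_\alpha)$ is either $0$ or a simple factor of $\mf{g}_{B_{i_0}}^{\mathrm{ss}}$ \emph{distinct} from the one hit by $\mf{s}_{\alpha_0}$; this follows since distinct simple ideals of $\mf{g}_N^{\mathrm{ss}}$ commute, whereas a non-abelian simple factor does not commute with itself, so two distinct $\mf{s}_\alpha$'s cannot map onto the same simple factor of $\mf{g}_{B_{i_0}}^{\mathrm{ss}}$. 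You gesture at this in your final paragraph but it is the one nontrivial verification, so make it explicit. Second, it is the \emph{image} of an ideal under a \emph{surjective} Lie algebra map that is again an ideal; that surjectivity of each $\bar p_i$ is exactly what makes the simple factors of the $\mf{g}_{B_i}^{\mathrm{ss}}$ the objects being partitioned, and is worth stating when invoking the Goursat structure theorem. With those two remarks spelled out, the argument is complete.
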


We now come to the main criterion of our work.
\begin{lem}[Key criterion]\label{lem:fuchsiancriterion}
    Let  $A$ be a rank-two object of $\mbf{G}$  with differential Galois group $\mathrm{SL}_2$. Suppose that $A$ is Lie-generated by an object of the form  $\pi_{2*}\pi_1^*\mc{H}$, where $\mc{H}$ is an irreducible hypergeometric connection, which is not induced, and $\pi_{1}$ and $\pi_{2}$ are maps fitting into a diagram as in \cref{defn:hypercat}. Then $\mc{H}$ is of rank two with $\mf{g}_{\mc{H}}\cong \mf{sl}_2$. 
    
    Furthermore, the adjoint trace field of $A$, \ie the trace field of $\ad^0(A)$, 
    is isomorphic to the adjoint trace field of $\mc{H}$. Here $\ad^0(A)$ denotes the connection given by the traceless endomorphisms of $A$.
\end{lem}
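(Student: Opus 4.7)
The plan is to dispose of the two conclusions in turn. For the first --- the rank and Lie algebra of $\mc{H}$ --- I would start by noting that $\pi_1^*$ preserves the Lie algebra of the differential Galois group (pullback along a finite cover restricts the monodromy to a finite-index subgroup), while $\pi_{2*}$ corresponds to induction of representations, whose image sits in a wreath product; at the Lie algebra level, this yields an embedding $\mf{g}_B \hookrightarrow \mf{g}_{\mc{H}}^{\deg \pi_2}$ for $B = \pi_{2*}\pi_1^*\mc{H}$. The Lie-generation hypothesis provides a surjection $\mf{g}_B \twoheadrightarrow \mf{g}_A = \mf{sl}_2$; since $\mf{sl}_2$ is simple, Goursat's lemma for Lie algebras forces $\mf{sl}_2$ to appear as a simple factor of $\mf{g}_{\mc{H}}$. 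Combined with the Beukers--Heckman classification (\cref{thm:beukersheckman}), which applies because $\mc{H}$ is irreducible and not induced, this leaves three cases: $\mf{g}_{\mc{H}} = \mf{sl}_2$ of rank two, $\mf{so}_3 \cong \mf{sl}_2$ of rank three, or $\mf{so}_4 \cong \mf{sl}_2 \oplus \mf{sl}_2$ of rank four. The last two are ruled out by observing that in those cases the underlying representation of $\mc{H}$ is the standard representation of $\mathrm{SO}_3$ or $\mathrm{SO}_4$, so every tensor construction from $\pi_{2*}\pi_1^*\mc{H}$ lives in the representation theory of a finite extension of a product of copies of $\mathrm{PGL}_2$; such a group has no faithful $2$-dimensional representation, and so any rank-two summand Lie-generated by $B$ must have finite monodromy, contradicting $G_A = \mathrm{SL}_2$.

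For the second claim, I would pull everything back to $C$ (after possibly enlarging $C$ so that $\pi_2$ is Galois with group $\Gamma$). Base change then gives $\pi_2^*B \cong \bigoplus_{\sigma \in \Gamma} \sigma^*\pi_1^*\mc{H}$, and $\pi_2^*A$ is Lie-generated by this direct sum. Applying \cref{lemma:singlegeneration} to the family $\{\sigma^*\pi_1^*\mc{H}\}_{\sigma \in \Gamma}$, and using that $\mf{g}_A = \mf{sl}_2$ is simple and non-commutative, shows $\pi_2^*A$ is in fact Lie-generated by a single summand $\sigma^*\pi_1^*\mc{H}$. Since both sides are now rank-two connections with connected differential Galois group $\mathrm{SL}_2$ acting via the standard representation, and $\mathrm{SL}_2$ has no outer automorphisms, they must be isomorphic up to twisting by a rank-one local system with finite monodromy. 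Passing to $\ad^0$ kills the twist, so $\ad^0(\pi_2^*A) \cong \sigma^*\pi_1^*\ad^0(\mc{H})$ as local systems on an appropriate open $C^{\circ}$ of $C$. Consequently, the subfields of $\mb{C}$ generated by the traces of the two adjoint representations on $\pi_1(C^{\circ})$ coincide, and invoking \cref{lemma:comminvariance} on both sides lifts this coincidence to the desired equality of the adjoint trace fields of $A$ and $\mc{H}$.

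I expect the main obstacle to lie in making the ``twist identification'' in the second part fully rigorous: passing from equality of connected Tannakian groups to a genuine isomorphism of local systems up to a finite-order rank-one twist requires carefully distinguishing the connected part of the relevant differential Galois groups from their finite part, and keeping track of how enlarging a representation by a finite-order twist does or does not alter its monodromy image. Ruling out the $\mf{so}_3$ and $\mf{so}_4$ cases in the first part is a related but more self-contained issue, hinging on the simple connectivity of $\mathrm{SL}_2$ versus $\mathrm{PGL}_2$ and the resulting parity constraints on dimensions of irreducible representations. A further subtlety is that \cref{lemma:comminvariance} comes with a non-unitary hypothesis; this will need to be verified (or imposed) in the intended applications, where it should follow automatically from the geometric origin of the local systems.
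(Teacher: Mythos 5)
Your overall skeleton (Mackey induction-restriction after pullback by $\pi_2$, simplicity of $\mf{sl}_2$ via Goursat to isolate a single $\mc{H}$, Beukers--Heckman to constrain $\mf{g}_{\mc{H}}$, then Maclachlan--Reid to compare adjoint trace fields) matches the paper's. However, your argument for eliminating the $\mf{so}_3$ and $\mf{so}_4$ cases does not work. Being ``Lie-generated by $B$'' is a condition purely on Lie algebras: it asserts that $\mf{g}_{A\oplus B}\to\mf{g}_B$ is an isomorphism, which only forces $G_{A\oplus B}^\circ\to G_B^\circ$ to be an \emph{isogeny}. It does not place $A$ inside $\langle B\rangle^{\otimes}$, so $A$ need not be a ``summand of a tensor construction from $B$.'' Concretely, if $G_B^\circ\cong\mathrm{PGL}_2$ (the $\mf{so}_3$ case) and $G_A=\SL_2$, the graph of the isogeny $\SL_2\to\mathrm{PGL}_2$ inside $\SL_2\times\mathrm{PGL}_2$ gives exactly the setup where $A$ is Lie-generated by $B$ while $G_B$ has no faithful two-dimensional representation; your ``no faithful $2$-dimensional rep $\Rightarrow$ finite monodromy'' step therefore fails. (The paper is itself terse here: it deduces ``$\mathcal{H}$ is of rank two'' by ``inspection'' of the Beukers--Heckman list, which is the very point you were trying to make rigorous, but your fix introduces an error rather than closing the gap.)

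On the second part, you go through an ``isomorphism up to a finite-order rank-one twist'' for $\pi_2^*A$ and $\sigma^*\pi_1^*\mc{H}$. This is more than is needed and harder to justify cleanly, partly because the differential Galois group of $\sigma^*\pi_1^*\mc{H}$ need not be connected; the paper instead works directly with the adjoint objects $\underline{\mf{g}}_{\pi_2^*A}$ and $\underline{\mf{g}}_{\mc{V}^\sigma}$: both are three-dimensional, each $\underline{\mf{g}}_{\mc{V}^\sigma}$ is simple (using $\mf{g}_{\mc{H}}\cong\mf{sl}_2$), and a surjection $\bigoplus_\sigma\underline{\mf{g}}_{\mc{V}^\sigma}\twoheadrightarrow\underline{\mf{g}}_{\pi_2^*A}$ plus a rank count gives the isomorphism $\underline{\mf{g}}_{\pi_2^*A}\cong\underline{\mf{g}}_{\mc{V}^\sigma}$ outright, sidestepping the twist entirely. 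Also note that your invocation of \cref{lemma:singlegeneration} applied to $\pi_2^*A$ and the family $\{\sigma^*\pi_1^*\mc{H}\}_\sigma$ is not literally legitimate: that lemma requires the object to lie in the Tannakian category generated by the family, and $\pi_2^*A$ is only Lie-generated by $\pi_2^*B$, not necessarily an object of $\langle\pi_2^*B\rangle^\otimes$. The conclusion you want (Lie-generation by a single $\sigma^*\pi_1^*\mc{H}$) is still correct but needs to be extracted by a Goursat-type argument on the sub-direct-sum $\mf{g}_{\pi_2^*B}\subset\bigoplus_\sigma\mf{g}_{\mc{V}^\sigma}$ rather than by citing the lemma.
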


\begin{proof}
    By assumption, $A$ is Lie-generated by $\pi_{2*}\pi_1^*\mc{H}$, and hence $\pi_2^*A$ is Lie\nobreakdash-generated by $\pi_2^*(\pi_{2*}\pi_1^*\mc{H})$, as objects of $\mbf{G}$. Since $\pi_2$ is a finite Galois cover,  by Mackey's induction-restriction formula we have 
    \[
    \pi_2^*(\pi_{2*}\pi_1^*\mc{H}) = \bigoplus_{\sigma} (\pi_1^*\mc{H})^{\sigma},
    \]
    where the sum is over automorphisms $\sigma\colon C\rightarrow C$  covering $\pi_2$, and the superscript $^{\sigma}$ denotes the pullback under $\sigma$. 
    
    For brevity, let us write $\mc{V}\defeq \pi_1^*\mc{H}$. Then $\mf{g}_{\mc{V}^{\sigma}} \cong \mf{g}_{\mc{V}} \cong \mf{g}_{\mc{H}}$, where the last isomorphism follows from \cite[Prop.\,1.4.4]{KatzDiff}. On the other hand,  $\mf{g}_A\cong \mf{sl}_2$ is a quotient of $\bigoplus_{\sigma} \mf{g}_{\mc{V}^{\sigma}}$. Since $\mathcal{H}$ is irreducible, $\mf{g}_{\mc{H}}$ is a semisimple Lie algebra by \cref{thm:beukersheckman}; as $\mathfrak{g}_{\mathcal{H}}$ surjects to $\mf{sl}_2$, by inspection of the groups appearing in \cref{thm:beukersheckman} we deduce that $\mathcal{H}$ is of rank two with $\mf{g}_{\mc{H}}\cong\mf{sl}_2$.

    It remains to prove the statement about trace fields, and for this, we may pass to a non-empty open subset  $U \subset \mathbb{P}^1$ such that $A$ is smooth on $U$, and $\bigoplus_{\sigma} \mc{V}^{\sigma}$ is smooth on $\pi_2^{-1}(U) \subset C$. Let $F_{\mathrm{ad}^0(\mc{H})}$ be the adjoint trace field of $\mc{H}$, and similarly for  $\pi_1^*\mathcal{H}$. So for any $\sigma$, the trace field of $\underline{\mf{g}}_{\mc{V}^{\sigma}}$ is the same as $F_{\mathrm{ad}^0(\pi_1^*\mathcal{H})}$, which is a priori a subfield of $F_{\mathrm{ad}^0(\mc{H})}$. On the other hand, there is a surjective morphism 
    \[
        f\colon   \bigoplus_{\sigma} \underline{\mf{g}}_{\mc{V}^{\sigma}} \longrightarrow \underline{\mf{g}}_{\pi_2^*A},
    \]
    where $\underline{\mf{g}}_{\mc{V}^{\sigma}}$ and~$\underline{\mf{g}}_{\pi_2^*A}$ are regarded as objects of the Tannakian subcategories $\langle \mc{V}^{\sigma}\rangle^\otimes$ and $\langle \pi_2^*A\rangle ^\otimes$ of $\mbf{G}$. Each $\underline{\mathfrak{g}}_{\mathcal{V}^\sigma}$ is simple and $f$ is surjective, so~$\underline{\mf{g}}_{\pi_2^*A}$ is isomorphic to a direct sum of some of the $\underline{\mf{g}}_{\mc{V}^{\sigma}}$. From this we deduce an isomorphism $\underline{\mf{g}}_{\pi_2^*A}\cong \underline{\mf{g}}_{\mc{V}^{\sigma}}$ for one automorphism $\sigma$ since both objects have the same rank. Therefore, the  trace field of $\underline{\mf{g}}_{\pi_2^*A}$ is isomorphic to that of  $F_{\mathrm{ad}^0(\pi_1^*\mathcal{H})}$.
    
    Finally, \cref{lemma:comminvariance} implies that the adjoint trace fields of $A$ and $\mc{H}$ are both invariant under passing to finite index subgroups, as we now explain. It suffices to show this for the Galois conjugates $\sigma(A)$ and $\sigma(\mc{H})$ for some choice of $\sigma\in \Aut(\mb{C})$; here, by Galois conjugates we mean the connections corresponding to $\sigma$ applied to the local systems for $A$ and $\mc{H}$, respectively. Since $A$ has infinite monodromy, it has a Galois conjugate which is not unitary, and for ease of notation we assume this is $A$ itself; then $\mc{H}$ must also be non-unitary, as it Lie-generates $A$. Now both $A$ and $\mc{H}$ satisfy the assumptions of  \cref{lemma:comminvariance}, and so their adjoint trace fields are commensurability invariants.
    
    In conclusion,  we now have
    \[
    F_{\mathrm{ad}^0(A)} = F_{\underline{\mf{g}}_A} = F_{\underline{\mf{g}}_{\pi_2^*A}} \simeq F_{\mathrm{ad}^0(\pi_1^*\mathcal{H})}=  F_{\mathrm{ad}^0(\mc{H})}, 
    \]
    as required.
\end{proof}

It remains to construct explicitly a rank-two object $A$ of $\mbf{G}$ as in \cref{lem:fuchsiancriterion} that violates its conclusion, \ie whose adjoint trace field cannot be isomorphic to that of a rank-two hypergeometric connection.

\subsection{Recollections on  Shimura curves}

We first collect some preliminaries regarding Shimura curves. We will try to take a down-to-earth approach here as that is all that is necessary for our applications. For more details on quaternion algebras, Shimura curves, and other things,   we refer the reader to the excellent and comprehensive book of Voight's \cite[\S 38]{voight-big-book}.

Let $F$ be a totally real number field and $B$ a quaternion algebra over $F$ which is split at a unique infinite place $\sigma\colon F\xhookrightarrow{} \mb{R}$. There is, therefore,  an isomorphism
\[
B\otimes_{F, \sigma} \mb{R} \cong M_2(\mb{R}). 
\]
There is a unique involution $\bar{~}\colon B\rightarrow B$, known as the standard involution (see \cite[\S 3.2]{voight-big-book}), so that for any $\gamma \in B$, the \emph{reduced norm} $\nrd{\gamma}:= \gamma\bar{\gamma}$ lies in $F$. The extension of scalars of $\nrd$ to $B\otimes_{F, \sigma}\mb{R}= M_2(\mb{R})$ is simply the determinant. For any order $\mc{O}\subset B$, let $\mc{O}_1\subset \mc{O}$ denote the elements of reduced norm 1; there are maps  
\[
\mc{O}_1^{\times} \longrightarrow (B\otimes_{F, \sigma} \mb{R})^{\times}_{\nrd =1}=\SL_2(\mb{R}) \longrightarrow \PSL_2(\mb{R}).
\]
Let $\Gamma\subset \PSL_2(\mb{R})$ denote the image of this composition, so that the above map factors as $\mc{O}_1^{\times} \rightarrow \mc{O}_1^{\times}/\{\pm 1\}\simeq \Gamma\subset \PSL_2(\mb{R})$.  

\begin{prop}
    The quotient $\Gamma\backslash \mf{H}$ is in bijection with the complex points of a complex orbicurve $\Sh_{B, \mc{O}}$; moreover, the latter can be defined over $\Qbar$. 
\end{prop}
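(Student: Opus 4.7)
The plan is to proceed in two stages: first give $\Gamma \backslash \mf{H}$ the structure of a complex orbicurve, then descend that structure to $\qbar$.

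For the first stage, the essential point is that $\Gamma$ is a discrete subgroup of $\PSL_2(\mb{R})$. The order $\mc{O}$ is a finitely generated free $\mb{Z}$-module of rank $4[F\colon \mb{Q}]$, so the diagonal embedding
\[
\mc{O}_1^\times \hookrightarrow \prod_\tau (B \otimes_{F,\tau} \mb{R})_{\nrd=1}^\times,
\]
taken over all infinite places $\tau$ of $F$, is discrete. Since $\sigma$ is the only infinite place at which $B$ splits, the $\tau$-factor is isomorphic to $\SL_2(\mb{R})$ for $\tau = \sigma$ and to the compact group $\mathrm{SU}(2)$ otherwise; projecting a discrete subgroup of the product to the $\sigma$-factor preserves discreteness because the complementary factors are compact. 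Hence the image $\Gamma \subset \PSL_2(\mb{R})$ is discrete. Any discrete subgroup of $\PSL_2(\mb{R}) = \Aut(\mf{H})$ acts properly discontinuously on $\mf{H}$ with finite cyclic stabilizers (each conjugate into the maximal compact $\mathrm{PSO}(2)$), so standard local uniformization equips $\Gamma \backslash \mf{H}$ with the structure of a complex orbicurve, whose cone points correspond to $\Gamma$-orbits with non-trivial stabilizer. When $B$ is a division algebra, a classical theorem (due in various forms to Fricke--Klein, Eichler, or Borel--Harish-Chandra) ensures further that $\Gamma$ is cocompact in $\PSL_2(\mb{R})$, so that $\Sh_{B, \mc{O}}$ is a compact orbicurve.

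For the second stage, I would invoke the theory of canonical models. Let $G = \Res_{F/\mb{Q}}(B^\times)$, regarded as a reductive group over $\mb{Q}$, and let $X$ denote the $G(\mb{R})$-conjugacy class of the Hodge homomorphism $h\colon \mb{S} \to G_\mb{R}$ corresponding to the splitting of $B$ at $\sigma$; one then has $X \simeq \mf{H}^\pm$, and $(G,X)$ is a Shimura datum. By the work of Shimura, Deligne, and Milne, the Shimura variety $\Sh(G,X)$ carries a canonical model over its reflex field $E \subset \qbar$, a number field. The orbicurve $\Gamma \backslash \mf{H}$ arises as a connected component of a suitable level cover of $\Sh(G,X)$ (the level being determined by $\mc{O}$), and so inherits an algebraic structure over a finite extension of $E$ inside $\qbar$.

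The main substantive input is the existence of canonical models invoked in the second stage. When $F = \mb{Q}$, this is accessible through a moduli interpretation of $\Sh_{B, \mc{O}}$ as parametrizing abelian surfaces with quaternionic multiplication by $\mc{O}$, where descent to a number field follows from the representability of the moduli problem. For general totally real $F$, one relies on Shimura's reciprocity law at special points, or equivalently Deligne's axiomatic construction in \emph{Travaux de Shimura}. For the purposes of this paper, it suffices to invoke the existence of a $\qbar$-model as a black box from this well-developed theory.
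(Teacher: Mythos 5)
The paper gives no proof of this proposition; it simply refers the reader to Milne's exposition of Shimura curves. Your sketch correctly recapitulates the standard argument one would find in that reference: discreteness of $\Gamma$ via the embedding of $\mc{O}_1^\times$ into $\prod_\tau (B\otimes_{F,\tau}\mb{R})^\times_{\nrd=1}$ and compactness of the factors at ramified archimedean places, local uniformization to get the orbicurve structure (with finite cyclic stabilizers conjugate into $\mathrm{PSO}(2)$), cocompactness when $B$ is division, and descent to $\Qbar$ via canonical models for the Shimura datum $(\Res_{F/\mb{Q}}B^\times,\, \mf{H}^\pm)$, with $\Gamma\backslash\mf{H}$ realized as a connected component at suitable level. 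This is the same approach the paper's citation points to, just spelled out rather than delegated.
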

   \begin{proof}
       For a nice exposition, we refer the reader to \cite[p.\,36]{milne-curves}.
   \end{proof}
   
\begin{defn}
    We refer to $\Sh_{B, \mc{O}}$ as the Shimura curve attached to $(B/F, \mc{O})$. 
\end{defn}

\begin{prop}\label{prop:sh unif loc sys}
    Let $S\subset \Sh_{B, \mc{O}}$ be the set of orbifold points, and $\Sh^{\circ}_{B, \mc{O}}$ the complement of $S$. Consider the representation 
    \[
    \rho\colon  \pi_1(\Sh^{\circ}_{B, \mc{O}})\rightarrow \PSL_2(\mb{R})\rightarrow \GL_3(\mb{C}),
    \] with the last map being the adjoint representation of $\PSL_2(\mb{R})$. Then
    \begin{itemize}
        \item  $\rho$ has trace field equal to $\sigma(F)$;
        \item if $\Shbo^{\circ}$ is rational, then $\rho$ may be lifted to  a representation 
        \begin{equation}\label{eq:geometriclifts}
        \tilde{\rho}\colon \pi_1(\Shbo^{\circ})\longrightarrow \SL_2(\mb{R}). 
        \end{equation} Moreover, any such lift is of geometric origin.
    \end{itemize}
\end{prop}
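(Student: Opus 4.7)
The plan is to prove the two bullet points separately. For the adjoint trace field, I would note that for any lift $\tilde\gamma\in\SL_2(\mb{R})$ of $\gamma\in\Gamma$ with eigenvalues $\lambda,\lambda^{-1}$, the action of $\ad(\tilde\gamma)$ on $\mf{sl}_2$ has eigenvalues $\lambda^2,1,\lambda^{-2}$, so $\tr(\rho(\gamma))=\tr(\tilde\gamma)^2-1$. For $\tilde\gamma\in\mc{O}_1^\times$, the trace in $M_2(\mb{R})$ equals $\sigma(\mathrm{trd}(\tilde\gamma))\in\sigma(F)$, giving the inclusion of the adjoint trace field of $\rho$ into $\sigma(F)$. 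The reverse inclusion is the standard statement that the invariant trace field $\mb{Q}(\{\tr(\gamma^2):\gamma\in\Gamma\})$ of the arithmetic Fuchsian group $\Gamma$ equals $\sigma(F)$; see \cite[Thm.~8.3.2]{maclachlan2003arithmetic}. Since $\tr(\ad(\tilde\gamma))=\tr(\tilde\gamma^{2})+1$, this invariant trace field coincides with the adjoint trace field of $\rho$.

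For the existence of the lift, the rationality hypothesis means the coarse moduli space of $\Sh^\circ_{B,\mc{O}}$ is $\mb{P}^1$ minus finitely many $\Qbar$-points, so $\pi_1(\Sh^\circ_{B,\mc{O}})$ is a finitely generated free group. The obstruction to lifting along the central extension $1\to\mb{Z}/2\to\SL_2(\mb{R})\to\PSL_2(\mb{R})\to 1$ lies in $H^2(\pi_1(\Sh^\circ_{B,\mc{O}}),\mb{Z}/2)$, and this group vanishes for a free group. Hence $\tilde\rho$ exists.

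For the geometric origin, I would first construct a distinguished lift $\tilde\rho_0$ using the PEL moduli interpretation of $\Sh_{B,\mc{O}}$: it carries a universal abelian scheme $\pi\colon A\to \Sh_{B,\mc{O}}$ of relative dimension $2[F:\mb{Q}]$ with $\mc{O}$-action. The object $V:=R^1\pi_*\Omega^\bullet_{A/\Sh}$ lies in $\mbf{G}$ and carries a $B$-module structure; after extension of scalars, the algebra $B\otimes_\mb{Q}\Qbar \cong \prod_{\tau\colon F\hookrightarrow\Qbar}M_2(\Qbar)$ induces a corresponding decomposition of $V\otimes\Qbar$, and the rank-two summand at the factor indexed by $\sigma$ is a lift $\tilde\rho_0$ of $\rho$. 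Since $\mbf{G}$ is stable under direct summands, $\tilde\rho_0\in\mbf{G}$. For any other lift $\tilde\rho$, the pointwise ratio $\chi(\gamma):=\tilde\rho(\gamma)\tilde\rho_0(\gamma)^{-1}$ lies in $\ker(\SL_2\to\PSL_2)=\{\pm I\}$ and hence defines a character $\pi_1(\Sh^\circ_{B,\mc{O}})\to\{\pm 1\}$; the associated rank-one local system has finite monodromy and is therefore of geometric origin, as it arises from a double cover of the base. Since $\mbf{G}$ is closed under tensor products, $\tilde\rho=\tilde\rho_0\otimes\chi$ is of geometric origin as well.

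The hard part will be the explicit extraction of $\tilde\rho_0$ from the relative cohomology of the universal quaternionic family: when $F=\mb{Q}$ this is classical (the family of ``fake elliptic curves''), but when $F\neq\mb{Q}$ the universal abelian variety has higher dimension, and one must carefully track the decomposition of $H^1$ under the $\mc{O}$-action at each real place of $F$, which relies on Shimura's theory of canonical models to guarantee the relevant summand is defined over $\Qbar$.
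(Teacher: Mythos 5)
Your argument for the first two bullets matches the paper's: the adjoint trace field computation reduces to the invariant trace field of the arithmetic Fuchsian group via the identity $\tr(\ad\tilde\gamma)=\tr(\tilde\gamma^2)+1$, and the lift exists because $\pi_1(\Shbo^\circ)$ is free, so the obstruction in $H^2(\pi_1,\mb{Z}/2)$ vanishes. Your observation that all lifts differ by a quadratic character $\chi$, and that tensoring by such a finite-order character preserves geometric origin, is correct and useful; the paper simply leaves this implicit in its citation.

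The gap is in the construction of the distinguished lift $\tilde\rho_0$. You invoke ``the PEL moduli interpretation of $\Sh_{B,\mc{O}}$'' and a universal abelian scheme $A\to\Sh_{B,\mc{O}}$ with $\mc{O}$-action, but this object does not exist when $F\neq\mb{Q}$: the Shimura datum attached to $\Res_{F/\mb{Q}}B^1$ (or $B^\times$) for a quaternion algebra over a totally real field $F\neq\mb{Q}$, split at a single real place, is of \emph{abelian} type but not of \emph{Hodge} type, let alone PEL type. There is no faithful symplectic representation compatible with the Hodge cocharacter, so $\Sh_{B,\mc{O}}$ is not a moduli space of abelian varieties and carries no universal family from which to extract $R^1\pi_*$. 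You flag a difficulty at the end, but you misdiagnose it as a matter of tracking the $\mc{O}$-module decomposition of $H^1$ in higher dimension; the obstruction is more fundamental than that. The correct workaround, due to Shimura and carried out systematically by Carayol and by Corlette--Simpson, replaces $(G,X)$ by an auxiliary Shimura datum (roughly, $G\times_{F^\times}E^\times$ for a suitable CM extension $E/F$) which \emph{is} of PEL type and dominates (a cover of) $\Sh_{B,\mc{O}}$, then realizes the rank-two local system inside the cohomology of the resulting family of abelian varieties. This is precisely what the paper delegates to \cite[Thm.~9.3]{corlettesimpson}, together with the fact that a polarized integral weight-one VHS is the $R^1$ of a family of abelian varieties. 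As written, your direct PEL construction only works for $F=\mb{Q}$, i.e., for fake elliptic curves; but the counterexamples in the paper require $F$ to be a cubic field, so the case $F\neq\mb{Q}$ is the one that matters.
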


\begin{proof}
    The first part follows from   \cite[Thm.\,8.3.9]{maclachlan2003arithmetic} and Lemma 3.5.6 of \textit{loc.\,cit.}.

    For the second part, if  $\Shbo^{\circ}$ is rational, then its fundamental group is free, so one lifts $\rho$ by choosing any lift to $\SL_2(\mb{R})$ of the images by $\rho$ of the generators of~$\pi_1(\Sh^{\circ}_{B, \mc{O}})$. For the claim that such a $\tilde{\rho}$ is of geometric origin, see for example~\cite[Thm.\,9.3]{corlettesimpson} (where the authors treat the more general case of quaternionic Shimura varieties), combined with the fact that any polarized integral variation of Hodge structures of weight one comes from a family of abelian varieties.
\end{proof}

\begin{defn}\label{defn:unif loc sys}
    In the case when $\Shbo^{\circ}$ is rational, we refer to any of the rank-two local systems \eqref{eq:geometriclifts} as a \emph{uniformizing local system}.  
\end{defn}

\subsection{Non-hypergeometric $G$-functions from Shimura curves}

We are now able to prove the main theorem from the introduction. 

\begin{proof}[Proof of \upshape\cref{thm:gfunction}]

By combining \cref{lemma:singlegeneration} and \cref{lem:fuchsiancriterion} with \cref{prop:trace-field-hyp}, to prove \cref{thm:gfunction}, it suffices to find a non-unitary local system of geometric origin with differential Galois group $\SL_2$ and non-abelian adjoint trace field on a punctured projective line. 
    
\begin{claim}\label{claim:shimura}    Suppose $\Shbo$ is a Shimura curve which 
    \begin{itemize}
    \item is rational, 
        \item and is associated with a non-split quaternion algebra $B$ over a totally real cubic number field $F$ which is non-abelian (equivalently, the Galois closure of $F$ has Galois group $S_3$ over $\mb{Q}$).
    \end{itemize}
    Let $\mb{L}$ be a uniformizing  rank-two  local system on $\Shbo^{\circ}$, as in \cref{defn:unif loc sys}. Then   the adjoint trace field of $\mb{L}$ is $\sigma(F)\subset \mb{C}$ for some embedding $\sigma\colon F\hookrightarrow \mb{C}.$ In particular, the adjoint trace field of $\mb{L}$ is non-abelian. Moreover,  $\mathbb{L}$ has $\SL_2$-monodromy.
    \end{claim}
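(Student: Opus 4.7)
The plan is to deduce both assertions of \cref{claim:shimura} directly from \cref{prop:sh unif loc sys}, combined with a Borel density argument for the monodromy. By \cref{defn:unif loc sys}, the local system $\mathbb{L}$ corresponds to a representation $\tilde{\rho}\colon \pi_1(\Shbo^\circ) \to \SL_2(\mathbb{R})$ lifting the three-dimensional representation $\rho$ of \cref{prop:sh unif loc sys}; concretely, $\rho$ is obtained by composing $\tilde{\rho}$ with $\SL_2(\mathbb{R}) \twoheadrightarrow \PSL_2(\mathbb{R}) \hookrightarrow \GL(\mf{sl}_2)$.

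For the statement about the adjoint trace field, I would unwind definitions: by \cref{defn:trace-fields} the adjoint trace field of $\mathbb{L}$ is the trace field of $\ad^0(\tilde{\rho})$, and since $\ad(\tilde{\rho}) = \mathrm{triv} \oplus \ad^0(\tilde{\rho})$, this coincides with the trace field of the three-dimensional representation $\rho$. The first bullet of \cref{prop:sh unif loc sys} identifies that field as $\sigma(F) \subset \mathbb{C}$. Non-abelianness is then automatic from the hypothesis: the Galois closure of $F$ over $\mathbb{Q}$ is $S_3$, and hence so is the Galois closure of $\sigma(F)$, which is therefore not abelian.

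For the monodromy claim, the image of $\pi_1(\Shbo^\circ)$ under the composition of $\tilde{\rho}$ with $\SL_2(\mathbb{R}) \twoheadrightarrow \PSL_2(\mathbb{R})$ is (up to passage to a finite-index subgroup accounting for the orbifold structure at the punctures) the arithmetic lattice $\Gamma$ arising from $\mathcal{O}_1^\times$. By the Borel density theorem, $\Gamma$ is Zariski dense in $\PSL_2$ viewed as a complex algebraic group. Therefore the Zariski closure of $\tilde{\rho}(\pi_1(\Shbo^\circ))$ in $\SL_2(\mathbb{C})$ is a closed subgroup whose image in $\PSL_2(\mathbb{C})$ is Zariski dense; since $\SL_2 \to \PSL_2$ is a surjective isogeny, this Zariski closure is three-dimensional, and hence equals $\SL_2$.

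I do not anticipate any serious obstacle, since the technical content is carried by \cref{prop:sh unif loc sys} and the remaining arguments are elementary. The only delicate point to verify is the identification of the ``trace field of the three-dimensional representation $\rho$'' in \cref{prop:sh unif loc sys} with the ``adjoint trace field of $\mathbb{L}$'' appearing in the claim, which is handled by \cref{rem:adjointandtracefields} and the decomposition $\ad(\tilde{\rho}) = \mathrm{triv} \oplus \ad^0(\tilde{\rho})$ noted above.
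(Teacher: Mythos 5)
Your proof is correct and follows essentially the same route as the paper: the trace field statement is read off from \cref{prop:sh unif loc sys} after identifying $\ad^0(\tilde\rho)$ with the three-dimensional representation $\rho$, and the $\SL_2$ monodromy is obtained from Zariski density of the underlying arithmetic lattice. The only difference is that you spell out details the paper leaves implicit (the decomposition $\ad(\tilde\rho)=\mathrm{triv}\oplus\ad^0(\tilde\rho)$, and Borel density plus the finiteness of the isogeny $\SL_2\to\PSL_2$ for the density argument), while the paper simply asserts Zariski density ``by construction''; the approach is the same in substance.
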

  
    \begin{proof}[Proof of~\cref{claim:shimura}] 
    The claim about trace fields is contained in \cref{prop:sh unif loc sys}; for the second part, by construction $\mb{L}$ is Zariski dense in $\SL_2(\mb{R})$, as required. 
    \end{proof}
    
    For explicit examples of Shimura curves satisfying the assumptions of \cref{claim:shimura}, we refer the reader to  \cite[Table 4.3]{voight}; for example, we can take the first entry of the second column, where the cubic field $F$ has discriminant $148$, and the signature of the Shimura curve is $(0; 2^3, 3)$. The latter notation means that the Shimura curve is rational with four orbifold points with stabilizers $\mb{Z}/2\mb{Z}, \mb{Z}/2\mb{Z}, \mb{Z}/2\mb{Z}, \mb{Z}/3\mb{Z}$, respectively. Note that $F$ is a non-abelian extension since the discriminant $148$ is not a square: indeed, the Galois closure of a cubic number field has Galois group either $\mb{Z}/3\mb{Z}=A_3$ or $S_3$ and both cases are distinguished by the discriminant being a square or not. This concludes the proof of \cref{thm:gfunction}.
\end{proof}

\begin{rmk}
    In fact, there is a plentiful supply of counterexamples coming from such Shimura curves. For example, just for Shimura curves attached to cubic fields~$F$ as above, \cite[Table 4.3]{voight} gives 45 examples; amongst these examples, the biggest discriminant of $F$ is 1593.
\end{rmk}

\begin{rmk}
    As mentioned in the introduction, any uniformizing local system on a Shimura curve satisfying the hypotheses of \cref{claim:shimura} gives a counterexample to Dwork's conjecture \cite[p.\,784]{dwork1990differential}. In this way, we provide many new counterexamples to this conjecture.
\end{rmk}

Using \cref{lem:fuchsiancriterion}, we may rule out even more Shimura curve examples by arguing as Krammer does in \cite[\S11]{krammer}:
\begin{prop}\label{prop:Krammer-example}
    Suppose $\Shbo$ is a rational Shimura curve attached to a quaternion algebra $B$ over $\mb{Q}$, whose discriminant is not $(1)$ or $(2)(3)$. Then any of the uniformizing rank-two local systems on $\Shbo^{\circ}$ is not  an object of $\mbf{H}$.
\end{prop}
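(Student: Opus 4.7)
The plan is to combine the key criterion of \cref{lem:fuchsiancriterion} with Takeuchi's classification of arithmetic triangle groups, following the approach of Krammer in \cite[\S11]{krammer}. First, I would suppose for contradiction that a uniformizing rank-two local system $\mathbb{L}$ on $\Shbo^{\circ}$ belongs to $\mathbf{H}$. By \cref{prop:sh unif loc sys}, the monodromy of $\mathbb{L}$ is Zariski-dense in $\SL_2(\mathbb{R})$, so its Galois Lie algebra is $\mathfrak{sl}_2$, which is simple and non-commutative. \cref{lemma:singlegeneration} then allows me to Lie-generate $\mathbb{L}$ by a single generator of $\mathbf{H}$, which by \cref{lem:irreducibleobjects} can be taken of the form $\pi_{2*}\pi_1^*\mathcal{H}$ with $\mathcal{H}$ an irreducible, non-induced hypergeometric connection (finite-monodromy generators are excluded because $\mathbb{L}$ has infinite monodromy). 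Applying \cref{lem:fuchsiancriterion} forces $\mathcal{H}$ to have rank two with $\mathfrak{g}_\mathcal{H} \cong \mathfrak{sl}_2$.

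The crucial step is to promote this Lie-generation statement to a commensurability of Fuchsian groups. Tracing through the proof of \cref{lem:fuchsiancriterion}, the pullback $\pi_2^*\mathbb{L}$ lies in the Tannakian subcategory generated by $\bigoplus_\sigma \pi_1^*\mathcal{H}^\sigma$, and the surjection $\bigoplus_\sigma \mathfrak{sl}_2 \twoheadrightarrow \mathfrak{sl}_2$ onto its Galois Lie algebra factors as projection onto a single summand indexed by some $\sigma$. Since $\pi_2^*\mathbb{L}$ is a faithful two-dimensional representation of the Galois group of $\pi_1^*\mathcal{H}^\sigma$, namely $\SL_2$, it must coincide with $\pi_1^*\mathcal{H}^\sigma$ up to a rank-one twist with finite monodromy. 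Consequently the projective monodromy images of $\pi_2^*\mathbb{L}$ and $\pi_1^*\mathcal{H}$ in $\PSL_2(\mathbb{R})$ agree up to conjugation and finite index, and descending via the finite covers $\pi_1, \pi_2$ yields that the projective monodromy groups $\Gamma_{\mathbb{L}}$ and $\Gamma_{\mathcal{H}}$ are commensurable in $\PSL_2(\mathbb{R})$.

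Now $\Gamma_{\mathcal{H}}$ is a Fuchsian triangle group: by \cref{prop:hyp local mono} the generators $g_0, g_1, g_\infty$ satisfy $g_\infty g_1 g_0 = 1$, and since the hypergeometric parameters lie in $\mathbb{Q}$, each generator has finite order in $\PSL_2$. Because $\Gamma_{\mathbb{L}}$ is the uniformizing arithmetic Fuchsian group of $\Shbo$ attached to the quaternion algebra $B$ over $\mathbb{Q}$, commensurability forces $\Gamma_{\mathcal{H}}$ to be arithmetic too. Takeuchi's classification \cite{takeuchi1977commensurability} then asserts that the commensurability class of any arithmetic triangle group is determined by a pair $(F, B)$ with $F$ totally real; inspection of Takeuchi's Table~1 reveals that only two such classes have $F = \mathbb{Q}$, corresponding precisely to $B$ of discriminant $(1)$ (the split case) and $(2)(3)$. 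Since the invariant quaternion algebra is a commensurability invariant of arithmetic Fuchsian groups, the discriminant of the quaternion algebra defining $\Shbo$ must be $(1)$ or $(2)(3)$, contradicting the hypothesis.

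The main technical obstacle is the commensurability upgrade in the second paragraph. The conclusion of \cref{lem:fuchsiancriterion} records only an equality of adjoint trace fields, which is too weak on its own; one must use in addition that both $\pi_2^*\mathbb{L}$ and $\pi_1^*\mathcal{H}^\sigma$ are the standard two-dimensional representation of their common identity component $\SL_2$ to conclude that they agree up to a finite-order character twist. Once this is in place the argument reduces to a mechanical appeal to Takeuchi's table.
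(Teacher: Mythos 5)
Your proposal is correct and follows essentially the same route as the paper: invoke the key criterion (\cref{lem:fuchsiancriterion}), deduce that the projective monodromy group of $\mathbb{L}$ is commensurable with that of a rank-two hypergeometric local system, and then appeal to Takeuchi's classification of arithmetic triangle groups to read off the contradiction. Your second paragraph — upgrading the isomorphism $\ad^0(\pi_2^*\mathbb{L})\cong\ad^0(\pi_1^*\mathcal{H}^\sigma)$ from the proof of the key criterion to a rank-one twist of the local systems themselves, and hence to commensurability after descending along $\pi_1,\pi_2$ — correctly unpacks the step the paper states in a single sentence.

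One small imprecision: your assertion that the generators $g_0,g_1,g_\infty$ automatically have finite order in $\PSL_2$ is not quite right. A hypergeometric local system with rational parameters can have unipotent local monodromy (e.g.\ $g_1$ is a pseudo-reflection with determinant $1$, i.e.\ a parabolic), whose image in $\PSL_2(\mathbb{R})$ has infinite order. The cleaner statement, which the paper uses, is that the projective monodromy of $\mathcal{H}$ is a Fuchsian group (being commensurable with the Fuchsian group $\Gamma_{\mathbb{L}}$), and a Fuchsian quotient of the free group $\langle g_0,g_1,g_\infty\mid g_\infty g_1 g_0=1\rangle$ is a triangle group, possibly with vertices at infinity. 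Takeuchi's classification accommodates such parabolic generators, so the conclusion stands; just the justification should rest on discreteness via commensurability rather than on finiteness of the individual generators.
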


Note that we slightly abuse terminology and say that \enquote{$\mb{L}$ is not an object of $\mbf{H}$} to mean the same for  the associated connection under Riemann--Hilbert. 

\begin{proof}    
    Suppose $\mb{L}$ is a uniformizing rank-two local system on $\Shbo^{\circ}$, and that moreover, it is an object of $\mbf{H}$. Note that, by the construction of Shimura curves, the image of the associated representation in $\PSL_2(\mb{R})$ is a discrete subgroup. By \cref{lem:fuchsiancriterion}, we have that $\mb{L}$ is Lie-generated by $\pi_{2*}\pi_1^{*}\mb{H}$ for some rank-two hypergeometric local system $\mb{H}$ and covers $\pi_{1}$ and $\pi_{2}$ fitting into a correspondence as in \cref{defn:hypercat}. This implies that the monodromy group $P$ (\ie the image of the associated representation, and not the Zariski closure) of $\mb{H}$ is commensurable to that of $\mb{L}$. Since the hypergeometric monodromy groups that are Fuchsian are precisely the triangle groups, we deduce that $P$ is 
    \begin{itemize}
        \item commensurable with a triangle group, and 
    \item arithmetic.
    \end{itemize}
    The arithmetic triangle groups were classified by Takeuchi \cite{takeuchi1977commensurability}, and the only ones associated with quaternion algebras over $\mb{Q}$ are the ones with discriminants $(1)$ and $(2)(3)$, which contradicts  our assumption. Therefore, $\mathbb{L}$ is not in $\mathbf{H}$.
\end{proof}

\begin{example}
    Krammer gives an example of a Shimura curve satisfying the assumptions of \cref{prop:Krammer-example}. In this case, $B$ is the unique quaternion algebra over $\mb{Q}$ ramified at the primes $3$ and $5$, and $\mc{O} \subset B$ is a certain explicit order as given in \cite[\S10, Eq.\,(11)]{krammer}. Then $\Shbo^{\circ}=\mb{P}^1\setminus \{0,1, 81, \infty\}$. One choice of the uniformizing differential equation is given as Equation (10) of \textit{loc.\,cit.}, and centering this  at $x=-1$ we obtain \eqref{eq:counter-example} in the introduction.
\end{example}

\subsection{Examples from Teichm\"uller curves}

In this section, we give yet more examples of non-hypergeometric $G$-functions, this time coming from Teichm\"uller curves. Recall that a \emph{Teichm\"uller curve} is a smooth complex algebraic curve $X$ along with a morphism $X\rightarrow \mc{M}_g$ for some $g \geq 2$ such that $X$ is geodesic for the Teichm\"uller metric on $\mc{M}_g$. These are very special curves  in $\mc{M}_g$, and we refer the reader to~\cite[\S 1.3]{moller2006variations} for other characterizations as well as  detailed properties of such curves. Let $\pi\colon  \mscr{C}\rightarrow X$ be the pullback of the universal family of curves over~$\mc{M}_g$. The uniformization of $X$ gives a representation $\pi_1(X)\rightarrow \PSL_2(\mb{R})$, which we refer to as the uniformizing representation.

\begin{prop}[{\cite[Prop.\,2.4]{moller2006variations}}]
The local system $R^1\pi_*\mb{C}$ has  a rank-two local system $\mb{L}$, with trivial determinant, as a direct summand which is uniformizing in the following sense: if we write $\rho_{\mb{L}}\colon \pi_1(X)\rightarrow \SL_2(\mb{C})$ for the representation corresponding to $\mb{L}$, then the composition 
\begin{equation}\label{eqn: projection-uniform}
   \pi_1(X)\rightarrow \SL_2(\mb{C})\rightarrow \PSL_2(\mb{C}) 
\end{equation} is conjugate to the uniformizing representation of $X$.
\end{prop}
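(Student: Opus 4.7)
The plan is to construct $\mathbb{L}$ from the flat structure underlying a Teichm\"uller curve and then identify its monodromy with the uniformizing representation via the Veech group. As a first step, realize $X$ through a Veech surface: after possibly passing to a finite cover, $X$ is the image in $\mathcal{M}_g$ of the closed $\SL_2(\mathbb{R})$-orbit of a flat surface $(S,\omega)$, where $\omega$ is a holomorphic abelian differential whose Veech group $\mathrm{SL}(S,\omega)\subset \SL_2(\mathbb{R})$ is a lattice. By the defining property of a Teichm\"uller curve, $\mathbb{H}\to X$ is the uniformization, and the image of $\pi_1(X)$ in $\PSL_2(\mathbb{R})$ coincides with the projective Veech group $\mathrm{PSL}(S,\omega)$.

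Next I would construct $\mathbb{L}$ via the tautological \emph{Teichm\"uller direction} in cohomology. Inside $H^1(S,\mathbb{R})$ consider the two-dimensional subspace $V=\mathrm{span}_{\mathbb{R}}\{\mathrm{Re}(\omega),\mathrm{Im}(\omega)\}$. A direct computation shows that for $g\in \SL_2(\mathbb{R})$, the real and imaginary parts of the abelian differential on the deformed flat surface $g\cdot(S,\omega)$ coincide with the standard action of $g$ applied to $(\mathrm{Re}(\omega),\mathrm{Im}(\omega))$. When $g$ lies in the Veech group, there is an affine diffeomorphism of $S$ identifying $g\cdot(S,\omega)$ with $(S,\omega)$, and this diffeomorphism acts on $V$ via $g$. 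Globalizing these pointwise identifications over $X$ yields a rank-two $\mathbb{R}$-local subsystem of $R^1\pi_*\mathbb{R}$ whose monodromy is the tautological inclusion $\pi_1(X)\hookrightarrow \mathrm{SL}(S,\omega)\subset \SL_2(\mathbb{R})$; its complexification is the desired $\mathbb{L}$.

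It then remains to verify the three assertions. The splitting as a direct summand follows from the fact that the intersection pairing on $R^1\pi_*\mathbb{C}$ is a polarization and its restriction to $V$ is non-degenerate, being the total area of $(S,\omega)$ on the basis above; hence the orthogonal complement provides a complementary sub-local-system. Triviality of $\det \mathbb{L}$ is immediate since the monodromy factors through $\SL_2$. Finally, projecting the monodromy of $\mathbb{L}$ to $\PSL_2(\mathbb{C})$ recovers, by construction, the uniformizing representation of $X$.

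The step I expect to be most delicate is the globalization in the second paragraph: passing from the pointwise description of $V$ at each point of the $\SL_2(\mathbb{R})$-orbit to a genuine local system on $X$. Concretely, one must check that the two actions of the Veech group---on the Teichm\"uller disk $\mathbb{H}$ by M\"obius transformations, and on $H^1$ through the induced action of affine diffeomorphisms---are compatible, so that the pointwise two-planes $V$ descend coherently through the quotient $\mathbb{H}\to X$. Once this compatibility is in place, the remaining arguments are essentially a bookkeeping of how the flat-structure data sits inside the Hodge-theoretic data.
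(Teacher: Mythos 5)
The paper does not prove this proposition; it is quoted verbatim from M\"oller's work \cite[Prop.\,2.4]{moller2006variations}. Your argument is nonetheless essentially the standard one found in M\"oller's paper and in the flat-surfaces literature: identify the tautological Teichm\"uller plane $V=\mathrm{span}_{\mathbb{R}}\{\operatorname{Re}\omega,\operatorname{Im}\omega\}\subset H^1(S,\mathbb{R})$, observe that it is carried to itself by the affine diffeomorphisms realizing the Veech group, and use non-degeneracy of the restricted intersection form (equal to the flat area) to split it off. The ``globalization'' step you flag as delicate is indeed the crux, and your resolution is correct: along the $\SL_2(\mathbb{R})$-orbit through $(S,\omega)$ the two-plane $V_\tau$ is constant in the flat (Gauss--Manin) trivialization, since the deformed differential $\omega_\tau$ remains a complex-linear combination of $\operatorname{Re}\omega$ and $\operatorname{Im}\omega$; so $V$ is preserved by parallel transport over the Teichm\"uller disk and descends to a sub-local-system over $X$ once one checks the two Veech-group actions (by M\"obius transformations on $\mathbb{H}$ and via $\mathrm{Aff}(S,\omega)$ on $H^1$) are intertwined---which holds because the derivative map $\mathrm{Aff}(S,\omega)\to\SL_2(\mathbb{R})$ is precisely what identifies the stabilizer of the orbit with the deck group.

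Two small points worth tightening. First, your sentence ``the real and imaginary parts of the abelian differential on the deformed flat surface \ldots coincide with the standard action of $g$'' hides a choice of convention: depending on whether one uses $\phi^*$ or $\phi_*$ and the orientation of loops, the monodromy on $V$ may come out as $g\mapsto g^{-\mathsf{T}}$ rather than $g\mapsto g$, and the identification $\mathbb{H}\cong$ Teichm\"uller disk is anti-holomorphic, so the induced Fuchsian group is the mirror image $RGR^{-1}$ of the Veech group. Neither matters for the statement, since both produce a representation conjugate in $\PSL_2(\mathbb{C})$ to the uniformizing one, but it is worth acknowledging. Second, for the direct-summand claim it is not enough to note that the restriction of the polarization to $V$ is non-degenerate at one fiber; one should say explicitly that both $V$ and the intersection form are monodromy-invariant, so the flat orthogonal complement $V^\perp$ is also a sub-local-system and $R^1\pi_*\mathbb{R}=V\oplus V^\perp$ as local systems, then complexify. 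With those clarifications, your proof is complete and is, in substance, the same argument M\"oller gives, cast in the flat-geometric rather than the Higgs-bundle language.
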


We refer to such an $\mb{L}$ as a tautological local system on $X$.

\begin{prop}\label{prop: teich-criterion}
    Suppose $X$ is a rational Teichm\"uller curve, and $\mb{L}$ is one of its tautological local systems. If the adjoint trace field of $\mb{L}$ is $\mb{Q}(\sqrt{D})$ for some squarefree integer $D\geq 7$, then $\mb{L}$ does  not correspond to an object of $\mbf{H}$ under the Riemann--Hilbert correspondence. 
\end{prop}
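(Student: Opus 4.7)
The plan is to mimic the argument for Shimura curves established in \cref{prop:Krammer-example}, deploying the three main tools of the paper in sequence: geometric origin of $\mb{L}$, the Goursat-type \cref{lemma:singlegeneration}, and the key criterion \cref{lem:fuchsiancriterion}.

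First, I would verify that $\mb{L}$, regarded as a connection via Riemann--Hilbert, defines an object of $\mbf{G}$ with differential Galois group $\SL_2$. For the former, $\mb{L}$ is a direct summand of $R^1\pi_*\mathbb{C}$ for the pullback $\pi\colon \mathscr{C}\to X$ of the universal curve over $\mc{M}_g$, hence of geometric origin. For the latter, the composition $\pi_1(X)\xrightarrow{\rho_{\mb{L}}}\SL_2(\mathbb{C})\to \PSL_2(\mathbb{C})$ is conjugate to the uniformizing representation, whose image is a cocompact or lattice-like Fuchsian group, so the image of $\rho_{\mb{L}}$ is Zariski dense in $\SL_2$ and $\mathfrak{g}_{\mb{L}}\cong \mathfrak{sl}_2$.

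Arguing by contradiction, I would then assume that $\mb{L}$ lies in $\mbf{H}$. Since $\mathfrak{sl}_2$ is simple and non-commutative, \cref{lemma:singlegeneration} implies that $\mb{L}$ is Lie-generated by a single generator of $\mbf{H}$ from \cref{defn:H}. A connection with finite monodromy has trivial Lie algebra and therefore cannot Lie-generate $\mb{L}$; by \cref{lem:irreducibleobjects}, we may assume the generator is of the form $\pi_{2*}\pi_1^*\mc{H}$ for an irreducible non-induced hypergeometric connection $\mc{H}$. Then \cref{lem:fuchsiancriterion} forces $\mc{H}$ to have rank two with $\mathfrak{g}_{\mc{H}}\cong \mathfrak{sl}_2$ and identifies the adjoint trace field of $\mb{L}$ with that of $\mc{H}$. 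The hypothesis $F_{\mathrm{ad}^0(\mb{L})}=\mathbb{Q}(\sqrt{D})$ for squarefree $D\geq 7$ then contradicts \cref{prop:trace-field-hyp}(2), completing the proof.

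The main point requiring care is verifying the hypotheses of \cref{lem:fuchsiancriterion}, specifically that $\mb{L}$ is non-unitary, since this is what allows the commensurability invariance of the adjoint trace field (\cref{lemma:comminvariance}) to enter the proof of that lemma. This is immediate in our setting: the uniformizing representation has hyperbolic elements in $\PSL_2(\mathbb{R})$, and non-unitarity is inherited from any lift to $\SL_2(\mathbb{C})$. Beyond this verification, the proof is a direct invocation of the machinery already in place, exactly as in the Shimura curve case.
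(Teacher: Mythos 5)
Your proof is correct and follows the same route as the paper's own argument: reduce via \cref{lemma:singlegeneration} and \cref{lem:irreducibleobjects} to the case where $\mb{L}$ is Lie-generated by a single object $\pi_{2*}\pi_1^*\mc{H}$ with $\mc{H}$ irreducible and non-induced, invoke \cref{lem:fuchsiancriterion} to identify the adjoint trace field of $\mb{L}$ with that of $\mc{H}$, and contradict \cref{prop:trace-field-hyp}(2). You spell out the intermediate Goursat step and the Zariski-density/non-unitarity checks more explicitly than the paper does (the paper's phrasing ``\cref{lem:fuchsiancriterion} implies that $\mc{L}$ is Lie-generated\ldots'' elides the Goursat step that your write-up makes precise, and the non-unitarity check is in fact already handled internally in the proof of \cref{lem:fuchsiancriterion}, so it is harmless but not strictly required), but the substance is identical.
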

\begin{proof}
    As  the tautological local system $\mb{L}$ corresponds to the uniformizing representation (under the composition \eqref{eqn: projection-uniform}), it  in particular 
    has Zariski-dense monodromy. Let $\mc{L}$ denote the connection associated with $\mb{L}$, and assume that $\mc{L}$ is an object of~$\mbf{H}$. Then \cref{lem:fuchsiancriterion} implies that $\mc{L}$ is Lie-generated by an object of the form $\pi_{2*}\pi_1^*\mc{H}$ (with notation as in the statement of the lemma), where $\mc{H}$ is a rank-two hypergeometric connection with the same trace field as that of $\mb{L}$, namely $\mb{Q}(\sqrt{D})$ for some $D\geq 7$. On the other hand,   \cref{prop:trace-field-hyp} says that no such $\mc{H}$ exists, and hence $\mc{L}$ cannot be an object of $\mbf{H}$, as required. 
\end{proof}

\begin{rmk}
   The set of Teichm\"uller curves to which \cref{prop: teich-criterion} applies is non-empty: examples were constructed by McMullen in \cite[Thm.\,9.8]{mcmullen2003billiards}. The required claim about the trace fields of these Teichm\"uller curves follows from the proof of Cor.\,7.3 of \emph{loc.\,cit.}. We  refer the reader to \cite[\S 1]{bouwmoller} for a summary of the properties of these Teichm\"uller curves.  
\end{rmk}

\section{Infinitely many non-hypergeometric \texorpdfstring{$G$}{G}-functions of order two}\label{sec:infinite}

In this final section, we give infinitely many examples of non-hypergeometric $G$\nobreakdash-functions of differential order $2$, thereby answering a question raised by Krammer in the context of his counterexample to Dwork's conjecture; see \cite[\S12]{krammer}. We start with a very rough sketch of the argument to help orient the reader. 
 
 Let~$\mc{M}_{0, 4}$ denote the moduli space of curves of genus $0$ with $4$ marked points, and~$\pi\colon \mc{C}_{0, 4} \rightarrow \mc{M}_{0, 4}$ the universal $4$-punctured genus $0$ curve. The idea is to consider a local system $\mb{V}$ on $\mc{C}_{0,4}$ arising from a family of abelian varieties, which was constructed in a previous work of Lam and Litt, and to show that its restrictions to most fibers of $\pi\colon \mc{C}_{0,4}\rightarrow \mc{M}_{0,4}$ do not come from algebraic pullbacks of hypergeometric local systems. Indeed, if the contrary holds, then our trace field argument implies that the restrictions of~$\mb{V}$ to infinitely many fibers come from a single hypergeometric local system. This in turn implies that the image of $\mc{C}_{0,4}$ under the period mapping associated with~$\mb{V}$ intersects Hecke translates of a fixed curve at many points, \ie there are many \emph{unlikely} intersections. This violates the Andr\'e--Pink --Zannier conjecture (which is a theorem of Richard and Yafaev in our case of interest) in the theory of unlikely intersections, from which we may conclude.

\subsection{A family of rank-two local systems}

\begin{prop}\label{prop:localsystems}
    For each integer $m\geq 1$, there exists a dominant étale morphism $h \colon \widetilde{\mc{M}}\to \mc{M}_{0,4}$ fitting in a Cartesian diagram 
    \begin{equation}\label{eqn:mcg finite}
        \begin{tikzcd}
\widetilde{\mc{C}} \arrow[r] \arrow[d]
& \mc{C}_{0,4} \arrow[d] \\
\widetilde{\mc{M}}\arrow[r, "h"]
&  \mc{M}_{0,4}
\end{tikzcd}
    \end{equation}
    and a rank-two complex local system $\mb{V}$ with trivial determinant on $\widetilde{\mc{C}}$ satisfying: 
\begin{enumerate} 
\item for each closed point $p$ of $\Mtil$, the Zariski closure of the image of $\mb{V}|_{\Ctil_p}$ is~$\SL_2$, and the local monodromy is conjugate to 
    \[
    \begin{pmatrix}
        1 & 1 \\
        0 & 1
    \end{pmatrix}
    \]
    at three of the punctures of $\Ctil_p$, and to  \[
    \begin{pmatrix}
        -1 & 1 \\
        0 & -1
    \end{pmatrix}
    \]
    at the remaining puncture. Moreover, $\mb{V}|_{\Ctil_p}$ has trace field $\mb{Q}(\zeta +\zeta^{-1})$, where~$\zeta$ is a primitive $m$-th root of unity;
       \item there exists a family of principally polarized abelian varieties $\pi\colon \mc{A}\rightarrow \widetilde{\mc{C}}$ such that the local system $\mb{V}$ is a direct summand of $R^1\pi_*\mb{C}$. For~$m\geq 5$, this local system is not pulled back from a curve, \ie $\mb{V}$ is not isomorphic to $b^*\mb{U}$ for any map $b\colon \Ctil \rightarrow \mc{B}$ to a smooth curve  $\mc{B}$, and any local system $\mb{U}$ on~$\mc{B}$. 
\end{enumerate}
\end{prop}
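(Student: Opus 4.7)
The plan is to realize $\mb{V}$ as an isotypic piece of the cohomology of a family of cyclic $m$-covers, following the construction of Lam and Litt~\cite{lamlitt}.

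First I would construct the geometric data. The étale cover $h\colon \widetilde{\mc{M}} \to \mc{M}_{0,4}$ is introduced to globalize a choice of level-$m$ structure (a trivialization of a natural $\mu_m$-torsor on the punctured universal curve), which is only required locally without it. Viewing $\widetilde{\mc{C}} \to \widetilde{\mc{M}}$ as endowing each $4$-punctured $\mathbb{P}^1$ with an extra varying point $q$, I would construct a family of smooth projective curves $\widetilde f\colon \mc{Y} \to \widetilde{\mc{C}}$ whose fiber over $(p, q)$ is the cyclic $m$-cover of $\widetilde{\mc{C}}_p$ branched at the four marked points together with $q$, with a prescribed ramification pattern. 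The family $\pi\colon \mc{A} \to \widetilde{\mc{C}}$ is then the isogeny factor of the relative Jacobian of $\mc{Y}/\widetilde{\mc{C}}$ cut out by a pair of primitive conjugate $\mu_m$-characters, together with an induced principal polarization.

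Next, I would extract $\mb{V}$ by decomposing $R^1 \widetilde f_\ast \mb{C}$ under the induced $\mu_m$-action and isolating the isotypic component attached to a pair of conjugate primitive characters $\{\chi, \bar\chi\}$. This piece is defined over $\mb{Q}(\zeta_m + \zeta_m^{-1})$ by construction, and a Riemann--Hurwitz-type calculation shows it has rank two and trivial determinant. The local monodromies around the four tautological sections inside $\widetilde{\mc{C}}_p$ can be read off from standard Deligne--Mostow formulas for cyclic covers; the asymmetry between the three unipotent entries and the single $-\mathrm{unipotent}$ entry arises from a distinguished ramification exponent at the fourth branch point in the cyclic-cover data. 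Having written the monodromy matrices explicitly, the trace field equality $\mb{Q}(\zeta_m + \zeta_m^{-1})$ follows by inspection of their entries, while Zariski density in $\SL_2$ is immediate from the presence of a unipotent element together with absolute irreducibility. Geometric origin of $\mb{V}$ is built into the construction, since it is by design a direct summand of $R^1\pi_\ast\mb{C}$.

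The most delicate point is the non-pullback property when $m \geq 5$. My plan is to argue Hodge-theoretically: if $\mb{V}$ were of the form $b^\ast \mb{U}$ for some $b\colon \widetilde{\mc{C}} \to \mc{B}$ to a smooth curve, then the polarized variation of Hodge structure underlying $\mb{V}$ would factor through the one-dimensional base $\mc{B}$, so the associated period map on $\widetilde{\mc{C}}$ would have image of dimension at most one. I would derive a contradiction by computing the infinitesimal period map of $\mb{V}$, i.e.\ the Kodaira--Spencer map for its rank-two Hodge bundle, separately along the $\widetilde{\mc{M}}$-direction (varying the cross-ratio of the four punctures) and along the fiber direction of $\widetilde{\mc{C}} \to \widetilde{\mc{M}}$ (varying $q$), and by showing that both contributions are nonzero precisely when $m \geq 5$. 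The main obstacle is carrying out this infinitesimal Hodge-theoretic calculation cleanly and explaining why the threshold $m \geq 5$ is sharp; heuristically it coincides with the range in which the trace field $\mb{Q}(\zeta_m + \zeta_m^{-1})$ is strictly larger than $\mb{Q}$, which prevents degeneration of the two transverse directions of period variation.
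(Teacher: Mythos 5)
Your construction diverges from the paper's at the outset: the paper realizes $\mb{V}$ as Katz's middle convolution $\mathrm{MC}_{-1}(f^\circ_*\mb{L})$, where $f\colon E\to\mb{P}^1$ is the elliptic double cover branched at $\{0,1,\lambda,\infty\}$ and $\mb{L}$ is an order-$m$ torsion rank-one local system on $E$, then cites the results of Lam--Litt for geometric origin and monodromy; you propose instead to extract an isotypic piece of the cohomology of cyclic $m$-covers branched at five points. These are related in spirit (and the abelian-variety realization in Lam--Litt does involve such covers), but you never verify that your cyclic-cover piece actually has rank two, trivial determinant, the stated local monodromy shapes, or the stated trace field, and you also do not address how the Cartesian diagram \eqref{eqn:mcg finite} is obtained; the paper gets this from MCG-finiteness via the equivariance of $\mathrm{MC}_{-1}$ under the mapping class group and Landesman--Litt.

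The more serious problem is the non-pullback argument. You want to conclude that $\mb{V}$ is not pulled back because its period map has image of dimension $\geq 2$, contradicting factorization through a curve $\mc{B}$. But $\mb{V}$ is a \emph{rank-two} weight-one variation, so its period domain is the one-dimensional upper half plane; the period image of $\mb{V}$ alone has dimension at most one regardless of whether $\mb{V}$ factors through a curve, so the contradiction you describe cannot arise. To salvage this one would have to pass to the full $\mb{Z}$-VHS $\bigoplus_j \tau_j(\mb{V})$ obtained by summing Galois conjugates (which is what the paper does, but only later, in the proof of Theorem~\ref{thm: infinitely-many-examples}, for a different purpose), and then prove the period image there is two-dimensional; this is a nontrivial infinitesimal computation you do not carry out. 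Your heuristic for the threshold is also incorrect: for $m=6$ one has $\mb{Q}(\zeta_6+\zeta_6^{-1})=\mb{Q}$, so ``trace field strictly larger than $\mb{Q}$'' does not characterize the range $m\geq 5$, yet the proposition includes $m=6$. The paper instead handles non-pullback by a finite case analysis: Diarra's classification shows that any pullback-type rank-two local system on the four-punctured line must arise from $\mb{P}^1\setminus\{0,1,\infty\}$ with local monodromy orders constrained by a short table, and combining the local-monodromy shapes of $\mb{V}|_{\Ctil_p}$ with the trace field rules out every row of that table. This combinatorial route both closes the argument and explains where the threshold actually comes from; your Hodge-theoretic route, as written, does not.
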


\begin{proof}
    This follows from \cite[Thm.\,1.1.4, Thm.\,1.1.6]{lamlitt} and their proofs, as we now explain. The rank-two local systems of these results of \emph{loc.\,cit.} are obtained as follows: for each $m\geq 1$ and  $\lambda\in \mb{C}\setminus \{0,1\}$, consider $\mathrm{MC}_{-1}(\mb{W})$, where 
    \begin{itemize}
    \item $\mathrm{MC}_{-1}$ denotes Katz's middle convolution functor with respect to the unique non-trivial rank-one local system of order $2$ on $\mathbb{G}_m$. For our purposes, we simply take $\mathrm{MC}_{-1}$ to mean  a certain functor from the category of local systems on $\mb{P}^1\setminus \{0,1,\lambda, \infty\}$ to itself, though it can be phrased much more generally in the language of perverse sheaves: for a definition of this functor which stays in the realm of local systems, see \cite[Defn.\,3.1.1]{lam-landesman-litt};
        \item $\mb{W}$ is given by the direct image $f^{\circ}_*\mb{L}$ where  $f\colon E\rightarrow \mb{P}^1$ is the double cover ramified at $0,1,\lambda, \infty$, and $f^{\circ}$ denotes the restriction $f|_{{E\setminus \{f^{-1}\{0,1,\lambda, \infty\}\}}}$; 
        \item $\mb{L}$ is the restriction to $E\setminus \{f^{-1}\{0,1,\lambda, \infty\}\}$ of a torsion rank-one local system of  order $m$ on the elliptic curve $E$.
    \end{itemize}
    Then $\mathrm{MC}_{-1}(\mb{W})$ is a local system on $\mb{P}^1\setminus \{0,1,\lambda, \infty\}$.  The computations of the rank and the local monodromies of $\mathrm{MC}_{-1}(\mb{W})$ are recorded in \cite[Prop.\,3.1.1]{lamlitt}; note that it has 
     trivial determinant. Moreover, it is MCG-finite, \ie has a finite orbit under the action of the mapping class group of $\mb{P}^1\setminus \{0,1,\lambda, \infty\}$, since the functor $\mathrm{MC}_{-1}$ is equivariant for the action of the mapping class group and local systems with finite monodromy are MCG-finite. Hence, $\mathrm{MC}_{-1}(\mb{W})$ extends to a diagram of the form \eqref{eqn:mcg finite} by \cite[Cor.\,2.3.5]{landesman-litt-canonical}; note that \textit{loc.\,cit.} makes the assumption that $g\geq 1$, but this assumption is used only through \cite[Lemma.\,2.3.2]{landesman-litt-canonical}, which is automatic in our case since $\mathrm{MC}_{-1}(\mb{W})$ has trivial determinant. It is also possible to give a  direct argument for the existence of the morphism $h\colon \Mtil \rightarrow \mc{M}_{0,4}$ as in~\cite[proof of Thm.\,5.5.4]{Katz96rigid}.
    
    We now prove  the claim about trace fields,  or equivalently, by \cref{prop:field of defn} the fields of definition.  Note that, as can be seen from its definition,  $\mathrm{MC}_{-1}$ is Galois equivariant: more precisely,  
    \[
    \mathrm{MC}_{-1}(\mb{M}\otimes_{\mb{C}, \sigma} \mb{C}) \simeq \mathrm{MC}_{-1}(\mb{M})\otimes_{\mb{C}, \sigma} \mb{C},
    \]
    for any local system $\mb{M}$ on $\mb{P}^1\setminus \{0,1,\lambda, \infty\}$, and any automorphism $\sigma\colon \mb{C}\rightarrow \mb{C}$.
    
    Therefore, it remains to check that $f^{\circ}_*\mb{L}$ has  field of definition $\mb{Q}(\zeta+\zeta^{-1})$, for~$\zeta$ a primitive $m$-th root of unity. This is immediate from the definitions and the assumption that $\mb{L}$ has order $m$.

    The claim that the local system $\mb{V}$ comes from a family of abelian varieties follows from \cite[Thm.\,1.1.6]{lamlitt}.
    
    Finally, we check that  $\mb{V}$ is not pulled back from a curve. Recall that the restriction of $\mb{V}$ to each fiber $\mb{P}^1\setminus \{0,1,\lambda, \infty\}$ is a MCG-finite local system, and such $\mb{V}$ that are of ``pullback type'', \ie pulled back from a curve, may be classified following \eg the analysis in 
    \cite{diarra}, as we now explain.
    Suppose for the sake of contradiction that $\mb{V}$ is of pullback type, \ie there exists $\mc{B}, b$ and $\mb{U}$ as in the statement of the proposition. Note that $\mb{U}$ must be irreducible, and therefore rigid, since this is true of any irreducible rank-two local system on $\mb{P}^1\setminus \{0,1,\infty\}$.
    
    Then Diarra \cite[Prop.\,2.6, Prop.\,2.7]{diarra} shows that the curve $\mc{B}$ may be taken to be $\mb{P}^1\setminus \{0, 1,\infty\}$. Now let $p_0, p_1, p_{\infty}\in \mb{Z}_{\geq 1} \cup \{\infty\}$ be the orders of the monodromies of $\mb{U}$ around $0, 1, \infty$, respectively; let us assume without loss of generality that $p_0\leq p_1\leq p_{\infty}$.  
    Since, for any $p \in \Mtil$, the local system $\mb{V}|_{\Ctil_p}$ has infinite monodromies at all four punctures, we must have $p_{\infty}=\infty$.
    
    Now   
    \cite[Table following Remarque 3.3]{diarra} gives the possible values of   $p_0, p_1$, as well as  $d:= \deg(b|_{\Ctil_p})$. If our $(p_0, p_1, p_{\infty})$ is in the second, third, or fourth row, the trace field of $\mb{U}$, and hence that of $\mb{V}$, would be $\mb{Q}$, contradicting the assumption~$m\geq 5$. Therefore, $(p_0, p_1, p_{\infty})$ must be in the first row, so that $d=2$. This forces $p_1=\infty$, since $\mb{V}|_{\Ctil_p}$ has four cusps with  infinity monodromy. This implies the trace field of $\mb{U}$ is $\mb{Q}$, again a contradiction, as required.   
\end{proof}

\subsection{The Andr\'e--Pink--Zannier conjecture} We will next show that the existence of only finitely many non-hypergeometric $G$-functions of order $2$ would contradict the Andr\'e--Pink--Zannier conjecture in the cases proven by Richard and Yafaev. 

\begin{thm}[{\cite[Thm.\,1.3]{richard2021generalised}}]\label{thm:apz}
    Let $s_0$ be a point of a Shimura variety of abelian type~$\Sh_K(G, X)$. Let $Z\subset \Sh_K(G, X)$ be a subvariety whose intersection with the generalized Hecke orbit of $s_0$ is Zariski dense. Then, $Z$ is a finite union of weakly special subvarieties.
\end{thm}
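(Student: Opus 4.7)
My proof plan follows the Pila--Zannier strategy that has become standard for unlikely-intersection problems on Shimura varieties. First I would reduce to the case where $Z$ is geometrically irreducible and the set of points of the generalized Hecke orbit lying in $Z$ is Zariski dense in $Z$; after passing to a connected component, the Shimura variety is uniformized as $\Gamma\backslash X^+$ for $X^+$ a Hermitian symmetric domain, and a key enabling result (due to Klingler--Ullmo--Yafaev) says that, restricted to a suitable fundamental domain $\mc{F}\subset X^+$, the uniformization map $\mathrm{unif}\colon \mc{F}\to \Sh_K(G,X)(\mb{C})$ is definable in the o-minimal structure $\mb{R}_{\mathrm{an},\exp}$.

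The next and hardest step is to establish a lower bound on the size of the Galois orbit of a generic point $s$ in the generalized Hecke orbit, polynomial in a suitable complexity measure of the Hecke translation sending $s_0$ to $s$. In the abelian-type case, one can approach this via the Kuga--Satake construction: the generalized Hecke orbit of $s_0$ parametrizes points whose underlying Hodge structures are isogenous in a controlled way, and Masser--W\"ustholz-type bounds combined with Faltings' isogeny theorem yield polynomial Galois lower bounds in terms of the minimal isogeny degree. This step is the main obstacle: these Galois-orbit estimates are delicate and require one to identify an arithmetic invariant whose field of definition grows polynomially with the chosen complexity, uniformly across the Hecke orbit. In the abelian type setting, Richard and Yafaev most likely reduce to a question about polarized abelian varieties and invoke a Masser--Zarhin style input.

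Third, I would apply the Pila--Wilkie counting theorem to the definable pre-image $\mathrm{unif}^{-1}(Z)\cap \mc{F}$: the many Galois conjugates of a single Hecke-orbit point, together with the Galois lower bound from the previous step, force the existence of algebraic points of bounded height in this definable set in numbers exceeding any polynomial bound on the ``transcendental part.'' Pila--Wilkie therefore produces a positive-dimensional semi-algebraic block $B$ contained in $\mathrm{unif}^{-1}(Z)$.

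Finally, I would invoke the hyperbolic Ax--Lindemann--Weierstrass theorem for Shimura varieties (Klingler--Ullmo--Yafaev, with refinements by Mok--Pila--Tsimerman) to conclude that the image of any such $B$ under $\mathrm{unif}$ is contained in a weakly special subvariety of $\Sh_K(G,X)$ properly containing (an open piece of) a component of $Z$; by maximality and an induction on dimension, restricting the argument to $Z$ minus the weakly special subvarieties already obtained eventually exhausts the Zariski-dense intersection and forces $Z$ itself to be a finite union of weakly special subvarieties, as required.
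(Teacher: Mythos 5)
This theorem is quoted verbatim from Richard and Yafaev \cite[Thm.\,1.3]{richard2021generalised} and the present paper does \emph{not} prove it --- it is imported as a black box and invoked once in the proof of \cref{thm: infinitely-many-examples}. There is therefore no proof in the paper against which to compare your sketch.

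As a description of the Richard--Yafaev argument itself, your outline does capture the overall Pila--Zannier strategy that they follow: o-minimal definability of the uniformization map restricted to a fundamental set (Klingler--Ullmo--Yafaev, Peterzil--Starchenko), polynomial Galois orbit lower bounds for points in a generalized Hecke orbit, Pila--Wilkie counting, and the hyperbolic Ax--Lindemann theorem to produce positive-dimensional weakly special subvarieties, followed by an induction. Two caveats are worth flagging. First, the genuinely new content of Richard--Yafaev is concentrated in your second step: defining an appropriate complexity (height) function on a \emph{generalized} Hecke orbit --- which, unlike a classical Hecke orbit, allows the defining morphism $\varphi\colon M\to G$ to vary --- and proving the requisite Galois lower bounds uniformly in terms of it. In the abelian-type case this is done by reducing to abelian varieties and invoking Masser--W\"ustholz-type isogeny estimates, as you say, but the reduction and the uniformity are themselves substantial; your sketch treats this as a known black box when it is in fact the heart of their paper. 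Second, your concluding "induction on dimension" is more delicate than stated: after Ax--Lindemann one must show that the union of maximal weakly special subvarieties of $Z$ meeting the orbit is a \emph{finite} union, which requires parametrizing these subvarieties in a definable family and rerunning the counting argument, rather than a straightforward Noetherian induction.
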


Let us briefly introduce the objects in the statement, referring the reader to~\cite[\S2]{richard2021height} for more details on generalized Hecke orbits. Recall that the Shimura variety $S=\Sh_K(G, X)$ is the double quotient $G(\mb{Q}) \backslash (X \times G(\mathbb{A}_f)) \slash K$, where $(G, X)$ is a Shimura datum and~$K$ a compact open subgroup of $G(\mathbb{A}_f)$. A point of $X$ is given by a morphism of real algebraic groups $\mb{S}:= \mathrm{Res}_{\mb{C}/\mb{R}}(\mathbb{G}_m) \to G_{\mathbb{R}}$, and its Mumford--Tate group is the smallest $\mb{Q}$-algebraic subgroup of $G$ containing its image. We normalize $(G, X)$ in such a way that $G$ is the generic Mumford--Tate group of~$X$. Each representation of $G$ gives rise to a local system on $S$, and we refer to such a local system as  \emph{tautological}.

Let $s_0=[x_0, g_0]$ be a point of $S$ and let $M$ be the Mumford-Tate group of~$x_0$. The \emph{generalized Hecke orbit} of $s_0$ is the set of $[\varphi \circ x_0, g]$ with $g \in G(\mathbb{A}_f)$ and $\varphi \in \Hom(M, G)$ satisfying $\varphi \circ x_0\in X$. 

Very roughly, if $S$ parametrizes abelian varieties with extra Hodge tensors, one may think of two points being in the same generalized Hecke orbit if the corresponding abelian varieties are isogenous, with no extra constraints on the Hodge tensors. Along the same lines, we have the following:

\begin{prop}\label{prop: iso-hs-generalized-hecke}
Let $\mb{V}_{S, \mb{Q}}$ be one of the tautological $\mb{Q}$-variations of Hodge structures  on $S=\Sh_K(G, X)$. Suppose $s_0, s_1 \in S$ are such that there is an isomorphism of $\mb{Q}$-Hodge structures:
\[
\mb{V}_{S, \mb{Q}, s_0}\simeq \mb{V}_{S, \mb{Q}, s_1}.
\]
Then, $s_1$ is in the generalized Hecke orbit of $s_0$.
\end{prop}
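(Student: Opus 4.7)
The plan is to unpack the definition of tautological VHS and translate the hypothesis into a statement about Mumford--Tate groups. A tautological $\mb{Q}$-variation of Hodge structures on $S = \Sh_K(G, X)$ comes from a $\mb{Q}$-representation $\rho \colon G \to \GL(V_\mb{Q})$, with fiber at $s = [x, g]$ given by $V_\mb{Q}$ equipped with the Hodge structure $\rho \circ x \colon \mathbb{S} \to \GL(V_\mb{R})$. An isomorphism of $\mb{Q}$-Hodge structures $\mb{V}_{S, \mb{Q}, s_0} \simeq \mb{V}_{S, \mb{Q}, s_1}$ therefore amounts to an element $\alpha \in \GL(V_\mb{Q})$ satisfying
\[
\alpha \cdot (\rho \circ x_0)(z) \cdot \alpha^{-1} = (\rho \circ x_1)(z) \qquad \text{for all } z \in \mathbb{S}(\mb{R}).
\]

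Assuming $\rho$ is faithful---which, for the tautological VHS on Shimura varieties of abelian type, we may arrange by passing to a suitable quotient of $G$---one identifies the Mumford--Tate group $M_0$ of $x_0$ with $\rho(M_0) \subset \GL(V_\mb{Q})$, and likewise $M_1$ with $\rho(M_1)$. The displayed relation then forces $\alpha \rho(M_0) \alpha^{-1} = \rho(M_1) \subset \rho(G)$, so that conjugation by $\alpha$ takes values in $\rho(G)$. This lets us define a morphism of $\mb{Q}$-algebraic groups
\[
\varphi \defeq \rho^{-1} \circ \mathrm{Ad}(\alpha) \circ \rho|_{M_0} \colon M_0 \longrightarrow G.
\]
By construction, $\varphi \circ x_0 = \rho^{-1}\bigl(\alpha (\rho \circ x_0) \alpha^{-1}\bigr) = \rho^{-1}(\rho \circ x_1) = x_1$, which lies in $X$ because $s_1 \in S$. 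Hence $s_1 = [x_1, g_1] = [\varphi \circ x_0, g_1]$ is, by the very definition recalled just before the proposition, in the generalized Hecke orbit of $s_0$.

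The main (and essentially only non-formal) subtlety lies in the faithfulness assumption. For Shimura varieties of abelian type, the tautological VHS of interest arise from families of abelian varieties, where $\rho$ is faithful on $G^{\mathrm{der}}$ but can fail to be faithful on the central torus. One either passes to $G/\ker\rho$ equipped with the induced Shimura datum---which preserves generalized Hecke orbits up to a central ambiguity absorbed in the compact open subgroup $K$---or observes that such a central ambiguity in $\varphi$ does not affect the existence of a $g \in G(\mathbb{A}_f)$ representing $s_1$ as $[\varphi \circ x_0, g]$. Either way, the verification is standard bookkeeping and does not alter the structure of the argument above.
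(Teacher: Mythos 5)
Your proof is correct and takes essentially the same route as the paper's: both extract from the isomorphism $\alpha$ of $\mathbb{Q}$-Hodge structures a conjugation identifying the Mumford--Tate groups, then define $\varphi$ as the composite of that conjugation with the embedding into $G$ and check $\varphi\circ x_0 = x_1$. You are somewhat more explicit than the paper in tracking the representation $\rho$ and in flagging the (mild) faithfulness caveat, which is a reasonable clarification rather than a different argument.
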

\begin{proof}
Suppose $s_i=[x_i, g_i]$. 
For $i=0,1$, let $M(s_i)$ denote the Mumford--Tate group of $\mb{V}_{S, \mb{Q}, s_i}$. Using the morphisms $x_i$, we have maps of $\mb{Q}$-groups
\[
\phi_i: M(s_i) \rightarrow G.
\]
The isomorphism $\mb{V}_{S, \mb{Q}, s_0}\simeq \mb{V}_{S, \mb{Q}, s_1}$ induces an isomorphism $M(s_0)\xrightarrow[]{\iota} M(s_1)$ such that the diagram
\begin{equation}
    \begin{tikzcd}
  M(s_0)  \arrow[r, "\phi_0"] \arrow[d, "\iota"] & G \arrow[d, "\mathrm{id}"]\\
  M(s_1) \arrow[r, "\phi_1"] & G
\end{tikzcd}
\end{equation}
commutes. Then, taking $\varphi:= \phi_1 \circ \iota$ in the definition of the generalized Hecke~orbit suffices.
\end{proof}

\subsection{Main result}

We keep notation from \cref{prop:localsystems}. 

\begin{thm}\label{thm: infinitely-many-examples}
    Let $m$ be an integer such that $\phi(m)/2$ is odd $\geq 3$. Then there exists a finite set $S_{\exc}\subset \Mtil(\Qbar)$ such that   for any $p\in \Mtil(\Qbar)\setminus S_{\exc}$, the vector bundle with connection corresponding to the local system $\mb{V}|_{\Ctil_p}$ is not an object of~$\mbf{H}.$ 
    
    Moreover, we can choose infinitely many points $p_1, p_2, \dots \in \Mtil(\Qbar)\setminus S_{\exc}$ such that 
         for~$i\neq j$, the local systems $\mb{V}|_{\Ctil_{p_i}}$ and  $\mb{V}|_{\Ctil_{p_j}}$ cannot be identified with each other via an automorphism of $\mb{P}^1$ and tensoring by a rank-one local system.
\end{thm}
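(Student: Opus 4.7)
The plan is to argue by contradiction using the Andr\'e--Pink--Zannier conjecture, following the sketch given at the beginning of this section. Suppose $T := \{p \in \widetilde{\mc{M}}(\Qbar) : \mbf{V}|_{\widetilde{\mc{C}}_p} \in \mbf{H}\}$ is infinite. By \cref{prop:localsystems}, each $\mbf{V}|_{\widetilde{\mc{C}}_p}$ has Zariski closure $\SL_2$, trivial determinant, and trace field $\mathbb{Q}(\zeta_m + \zeta_m^{-1})$, a field of odd degree $\phi(m)/2 \geq 3$ over $\mathbb{Q}$. For each $p \in T$, \cref{lem:fuchsiancriterion} produces an irreducible non-induced rank-two hypergeometric connection $\mc{H}_p$ with the same adjoint trace field, via a correspondence. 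By the argument in the proof of \cref{prop:trace-field-hyp}, such a hypergeometric has local eigenvalues that are roots of unity whose orders are bounded in terms of the degree of the adjoint trace field. Consequently, there are only finitely many isomorphism classes of such $\mc{H}_p$, and after passing to an infinite subset of $T$ (which I will continue to call $T$) we may assume $\mc{H}_p$ is a single hypergeometric connection $\mc{H}$ independent of $p$.

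The next step is to translate this into a statement about a Shimura variety. The family $\pi\colon \mc{A} \to \widetilde{\mc{C}}$ from \cref{prop:localsystems}(2), after restricting to a factor cut out by the Hodge tensors associated with $\mbf{V}$, defines a period map $\phi\colon \widetilde{\mc{C}} \to S$ into a Shimura variety $S = \mathrm{Sh}_K(G,X)$ of abelian type, so that the tautological $\mathbb{Q}$-VHS on $S$ pulls back to a form of $\mbf{V}$. Similarly, the correspondence realizing the Lie-generation by $\pi_{2*}\pi_1^*\mc{H}$ shows that the Hodge structure of $\mbf{V}|_{\widetilde{\mc{C}}_p}$ at every point is isomorphic (after the $\mathrm{ad}^0$-twist accounting for Lie-generation) to one drawn from a fixed subvariety $Z_0 \subset S$ associated with $\mc{H}$. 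By \cref{prop: iso-hs-generalized-hecke}, the image $\phi(\widetilde{\mc{C}}_p)$ lies in the generalized Hecke orbit of $Z_0$. As $T$ is infinite and $\widetilde{\mc{M}}$ is one-dimensional, $\bigcup_{p \in T} \widetilde{\mc{C}}_p$ is Zariski dense in $\widetilde{\mc{C}}$, so $\phi(\widetilde{\mc{C}})$ meets the generalized Hecke orbit of $Z_0$ in a Zariski dense subset. By \cref{thm:apz}, the Zariski closure of $\phi(\widetilde{\mc{C}})$ in $S$ is a finite union of weakly special subvarieties.

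To conclude, I would analyze the structure of weakly special subvarieties to show that $\mbf{V}$ would then be forced to factor through a local system pulled back from a curve, contradicting the non-pullback statement of \cref{prop:localsystems}(2) for $m \geq 5$; in short, the $\SL_2$-monodromy combined with being weakly special in an abelian-type Shimura variety forces a factorization through a Shimura curve, which yields the pullback conclusion. This contradiction shows that $T$ is finite; setting $S_{\exc} := T$ gives the first claim. For the second claim, note that the equivalence relation on points of $\widetilde{\mc{M}}$ identifying $p$ and $p'$ whenever $\mbf{V}|_{\widetilde{\mc{C}}_p}$ and $\mbf{V}|_{\widetilde{\mc{C}}_{p'}}$ agree up to an automorphism of $\mathbb{P}^1$ and a rank-one twist has finite orbits, since the automorphism group of a four-punctured $\mathbb{P}^1$ is a finite extension of a subgroup of $S_4$ and rank-one twists preserving the trivial determinant are constrained to finitely many. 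As $\widetilde{\mc{M}} \setminus S_{\exc}$ remains positive-dimensional, its image in this finite quotient still contains infinitely many points.

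The main obstacle will be the precise identification of the Shimura datum and the subvariety $Z_0$ attached to the hypergeometric $\mc{H}$ (Step 2), and the deduction in Step~4 that a weakly special image of $\phi(\widetilde{\mc{C}})$ forces $\mbf{V}$ to be pulled back from a curve. Both steps require carefully matching the Hodge-theoretic content of the Lie-generation criterion with the generalized Hecke orbit in the Shimura variety via \cref{prop: iso-hs-generalized-hecke}.
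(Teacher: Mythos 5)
Your proposal follows the same broad strategy (contradiction via the Andr\'e--Pink--Zannier theorem) and gets the early reductions right, but there are two genuine gaps relative to the paper's argument that you would need to close.

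First, the application of \cref{thm:apz}. As stated, that theorem concerns subvarieties $Z$ whose intersection with the generalized Hecke orbit of a single \emph{point} $s_0$ is Zariski dense. Your step 3 instead sets up an intersection of $\phi(\widetilde{\mc{C}})$ with the generalized Hecke orbit of a whole \emph{curve} $Z_0$, which is not covered by \cref{thm:apz}. The paper avoids this issue entirely: it fixes a single Mumford--Tate generic point $w\in W$ and uses the correspondence in \cref{lem:fuchsiancriterion} to produce, for each $p_i$, a point $q_i\in\widetilde{\mc{C}}_{p_i}$ lying over $w$, whose Hodge structure is literally isomorphic to that of $w$. By \cref{prop: iso-hs-generalized-hecke}, all the $i_{\mb{V}}(q_i)$ then lie in the generalized Hecke orbit of the single point $i_{\mb{H}}(w)$, so \cref{thm:apz} applies to $Z=\overline{\{i_{\mb{V}}(q_i)\}}$.

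Second, the concluding contradiction. You plan to argue that a weakly special image forces $\mb{V}$ to be pulled back from a curve, contradicting \cref{prop:localsystems}(2) for $m\geq 5$; you flag this step as the main obstacle, and indeed it is not clear it can be carried out. The paper instead uses a clean dimension count which is exactly why the hypothesis $\phi(m)/2\geq 3$ (rather than $m\geq 5$) appears: the period domain is a Hilbert modular variety of dimension $[K:\mb{Q}]=\phi(m)/2\geq 3$ attached to the $\mb{Q}$-simple (modulo center) group $\Res^K_{\mb{Q}}\SL_2$, so a proper weakly special subvariety cannot contain Mumford--Tate generic points; since the $i_{\mb{V}}(q_i)$ are all Mumford--Tate generic, $Z$ would have to be the entire Shimura variety, but $Z\subset\overline{i_{\mb{V}}(\widetilde{\mc{C}})}$ has dimension at most $2<\phi(m)/2$. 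This avoids any structure theory of weakly special subvarieties. Relatedly, you do not use the oddness of $\phi(m)/2$: this is needed to conclude that the adjoint trace field $K'$ equals the full trace field $K=\mb{Q}(\zeta_m+\zeta_m^{-1})$, since $[K:K']$ is a power of $2$ (as $\mathrm{tr}(\mathrm{Ad}(\gamma))=\mathrm{tr}(\gamma)^2-1$ for $\gamma\in\SL_2$); without this, the period domain might have smaller dimension and the dimension count could fail. Finally, for the second claim, your finiteness-of-equivalence-classes argument is a plausible alternative, whereas the paper simply reruns the APZ argument, but either route should work once the first claim is fixed.
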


\begin{proof}
We first show that there exists a finite set $S_{\exc}$ for which the first condition holds. Suppose the contrary, so that there are infinitely many points $p_i\in \Mtil(\Qbar)$ such that the connection associated with~$\mb{V}|_{\Ctil_{p_i}}$ belongs to $\mbf{H}.$ For brevity, we write~$\mb{V}_i$ for the  local system $\mb{V}|_{\Ctil_{p_i}}$ on $\Ctil_{p_i}$. 

    By \cref{lemma:singlegeneration} and the proof of  \cref{lem:fuchsiancriterion} (in particular, the sentence \enquote{...we deduce an isomorphism $\underline{\mf{g}}_{\pi_2^*A}\cong \underline{\mf{g}}_{\mc{V}^{\sigma}}$...}),  for each $i$, there is 
    \begin{itemize}
    \item 
    a correspondence diagram 
     \[ 
        \begin{tikzcd}[column sep=small]
            & C_i \arrow[dl,"\pi_1"'] \arrow[dr,"\pi_2"] & \\
            W_i & & \Ctil_{p_i}^{\circ},
        \end{tikzcd}
    \]
    with $\pi_{1}$ and $\pi_{2}$ finite \'etale and Galois, where $W_i\subset \mb{P}^1\setminus \{0,1, \infty\}$, and $\Ctil_{p_i}^{\circ}\subset \Ctil_{p_i}$ are non-empty Zariski open subsets,
    \item a local system $\mb{H}_i$ on $W_i$ which is the restriction of a rank-two hypergeometric local system on $\mb{P}^1\setminus \{0,1,\infty\}$, such that
    \begin{equation}\label{eqn:ad iso}
    \ad^0(\pi_2^*\mb{V}_i)\simeq \ad^0(\pi_1^*\mb{H}_i).
    \end{equation}
\end{itemize}

    Now let $K=\mb{Q}(\zeta +\zeta^{-1})\subset \mb{C}$ be the trace field of $\mb{V}_i$, and $K' \subset K$ the adjoint trace field. Note that $[K: K']$ is a power of $2$, since $\tr(\ad(\gamma))$ is the square of $\tr(\gamma)$ for $\gamma \in \mathrm{SL}_2$. As we are assuming that $[K:\mb{Q}]$ is odd, this implies $K=K'.$  
    
    By \eqref{eqn:ad iso} and \cref{lemma:comminvariance}, $K$ is equal to the trace field of $\ad^0(\mb{H}_i)$. 

    It follows straightforwardly from the rigidity of hypergeometric connections and \cref{prop:hyp local mono} that there are only finitely many rank-two hypergeometric local systems with adjoint trace field $K$; therefore, by the pigeonhole principle, we may assume that all the~$\mb{H}_i$'s are obtained as restrictions of the same hypergeometric local system. Abusing notation,  we now denote each $\mb{H}_i$ by the same symbol $\mb{H}$. 
    
    Let $\{\tau_j\}\in \Aut(\mb{C})$ be a set of automorphisms whose restriction to $K$ give the set of embeddings of $K$ into $\mb{C}$; for any complex local system $\mb{L}$ and $\tau\in \Aut(\mb{C})$, we write~$\tau(\mb{L})$ for the Galois conjugate local system. 

    Summing over Galois conjugates, we deduce from \eqref{eqn:ad iso} that there is an isomorphism of local systems 
    \begin{equation}\label{eqn:ad iso total}
\bigoplus_{j}\ad^0(\pi_2^*\tau_j(\mb{V}_i))\simeq \bigoplus_j \ad^0(\pi_1^*\tau_j(\mb{H})).
    \end{equation}

As $\mb{V}$ comes from  a family of abelian varieties, $\bigoplus_j \tau_j(\mb{V})$ underlies a polarized $\mb{Z}$-variation of Hodge structures ($\mb{Z}$-PVHS). This induces a $\mb{Z}$-PVHS structure on the left-hand side \eqref{eqn:ad iso total}, which in turn induces one on the right-hand side: note that this latter $\mb{Z}$-PVHS structure may depend on $i$. We now use these integral structures in the case $i=1$ to define corresponding period maps; let us write $W$ for~$W_1$. Replacing  $W$ by an \'etale cover if necessary (since the  $\mb{Z}$-structures on these local systems may not agree on the nose), we may assume that the $\mb{Z}$-PVHS on~$\bigoplus_j \ad^0(\pi_1^*\tau_j(\mb{H}))$ is pulled back from~$W$.

We deduce that there exists a period domain $\Gamma\backslash \mc{D}^{+}$, with its tautological  $\mb{Z}$-PVHS $\mb{V}_{\Gamma \backslash \mc{D}^{+}}$ (see for example \cite[\S 3.3]{klingler2017hodge}, where $\Gamma \backslash \mc{D}^{+}$ is referred to as a connected Hodge variety), and period maps 
\[
i_{\mb{V}}: \Ctil \rightarrow \Gamma \backslash \mc{D}^{+}, \quad \ i_{\mb{H}}: W \rightarrow \Gamma \backslash \mc{D}^{+}, 
\]
such that $i_{\mb{V}}^*\mb{V}_{\Gamma \backslash \mc{D}^{+}}$ and $i_{\mb{H}}^*\mb{V}_{\Gamma \backslash \mc{D}^{+}}$ are isomorphic to 
\[
\bigoplus \tau_j(\ad^0\mb{V}) \quad\text{and}\quad \bigoplus \tau_j(\ad^0\mb{\mb{H}}),
\]
respectively. We may assume that $i_{\mb{V}}(\Ctil)$ is Mumford--Tate generic in~$\Gamma\backslash \mc{D}^+$

In fact, this period domain is a Shimura variety of abelian type (more precisely, a Hilbert modular variety) attached to $\Res^{K}_{\mb{Q}}\SL_2$, up to center, as $\ad^0(\pi_2^*\mb{V})$ has monodromy group $\PGL_2$ and trace field $K$;  therefore $\Gamma \backslash \mc{D}^{+}$ has dimension $[K:\mb{Q}]$.

For any $p\in \widetilde{\mc{M}}$, as $\mb{V}|_{\Ctil_p}$ has monodromy $\SL_2$ and trace field $K$, it follows that~$i_{\mb{V}}(\Ctil_p)$ is also Mumford--Tate generic. Using the correspondence between $\Ctil_{p_1}$ and~$W_1$, we deduce that the same holds for $i_{\mb{H}}(W)$.

Fix $w\in W$ such that the Hodge structure  $\bigoplus_j \tau_j(\ad^0 \mb{H}_{w})$ is Mumford--Tate generic in $\Gamma \backslash \mc{D}^+$. We may assume that $w\in W_i$ for all $i$, since $\cap W_i$ is obtained from~$W$ by removing countably many points.

Then \eqref{eqn:ad iso total} implies that, for each $p_i$, there is a point $q_i\in \pi_2(\pi_1^{-1}(w)) \subset \Ctil_{p_i}$ such that there is an isomorphism of $\mb{Q}$-Hodge structures
\[
\bigoplus_j \ad^0(\tau_j(\mb{V}_{i, q_i})) \simeq \bigoplus_j \ad^0(\tau_j(\mb{H}_{w})).
\]
By \cref{prop: iso-hs-generalized-hecke} this implies that, for each $i$, the point $i_{\mb{V}}(q_i)$ is in the generalized Hecke orbit of $i_{\mb{H}}(w)$; note that $i_{\mb{V}}(q_i)$ is also Mumford--Tate generic in $\Gamma \backslash \mc{D}^{+}$.

Let $Z\subset \Gamma \backslash \mc{D}^{+}$ 
be the Zariski closure of $i_{\mb{V}}(q_1), i_{\mb{V}}(q_2), \dots$. Applying \cref{thm:apz}, we deduce that  $Z$
is a finite union of weakly special subvarieties. On the other hand, each $i_{\mb{V}}(q_i)$ is Mumford--Tate generic, which implies that $Z=\Gamma \backslash \mc{D}^{+}$. This is a contradiction since $\dim Z\leq \dim \Ctil =2$. We have therefore shown the existence of such a $S_{\exc}$. 

Now fix an infinite sequence of points $p_1, p_2, \ldots \in \Mtil(\Qbar)\setminus S_{\exc}$, and as before write $\mb{V}_i$ for $\mb{V}|_{\Ctil_{p_i}}$.
 For  $i, j$, we say that $\mb{V}_i$ and $\mb{V}_j$ are equivalent if they are isomorphic after applying an automorphism of $\mb{P}^1$ and tensoring with a rank-one local system; this is an equivalence relation on the set $\{\mb{V}_{i}\}$. The same argument using \cref{thm:apz} shows that there must be infinitely many equivalence classes, and we may conclude. 
\end{proof}

\bibliography{siegel}
\bibliographystyle{alpha}

\end{document}